\theoremstyle{plain}
\newtheorem{theorem}{Theorem}[section]
\newtheorem{lemma}[theorem]{Lemma}
\newtheorem{corollary}[theorem]{Corollary}
\newtheorem{proposition}[theorem]{Proposition}
\newtheorem{remark}[theorem]{Remark}
\theoremstyle{definition}
\newtheorem{definition}[theorem]{Definition}
\newtheorem{example}[theorem]{Example}
\DeclareMathOperator*{\pmin}{pmin}
\DeclareMathOperator{\Ap}{Ap}
\renewcommand{\epsilon}{\varepsilon}
\newcommand{\R}{{\mathbb R}}
\newcommand{\N}{{\mathbb N}}
\newcommand{\F}{{\mathbb F}}
\newcommand{\Z}{{\mathbb Z}}
\definecolor{gray}{rgb}{.5,.5,.5}
\definecolor{black}{rgb}{0,0,0}
\definecolor{blue}{rgb}{0,0,1}
\definecolor{red}{rgb}{1,0,0}
\definecolor{green}{rgb}{0,1,0}
\definecolor{gold}{rgb}{.5,.5,.2}
\definecolor{yellow}{rgb}{1,1,.4}
\definecolor{purple}{rgb}{.5,0,.5}
\definecolor{darkgreen}{rgb}{0,.5,0}
\definecolor{orange}{rgb}{1,.55,0}
\definecolor{white}{rgb}{1,1,1}
\let\originalleft\left
\let\originalright\right
\renewcommand{\left}{\mathopen{}\mathclose\bgroup\originalleft}
\renewcommand{\right}{\aftergroup\egroup\originalright}
\let\originaltodo\todo
\renewcommand{\todo}[1]{\originaltodo[inline]{#1}}
\title{Extremal factorization lengths of elements in commutative, cancellative semigroups}
\author{Baian Liu}
\begin{document}
	
	\maketitle

\begin{abstract}
	For a numerical semigroup $S \coloneqq \langle n_1, \dots, n_k \rangle$ with minimal generators $n_1 < \cdots < n_k$, Barron, O'Neill, and Pelayo showed that $L(s+n_1) = L(s) + 1$ and $\ell(s+n_k) = \ell(s) + 1$ for all sufficiently large $s \in S$, where $L(s)$ and $\ell(s)$ are the longest and shortest factorization lengths of $s \in S$, respectively. For some numerical semigroups, $L(s+n_1) = L(s) + 1$ for all $s \in S$ or $\ell(s+n_k) = \ell(s) + 1$ for all $s \in S$. In a general commutative, cancellative semigroup $S$, it is also possible to have $L(s+m) = L(s) + 1$ for some atom $m$ and all $s \in S$ or to have $\ell(s+m) = \ell(s) + 1$ for some atom $m$ and all $s \in S$. We determine necessary and sufficient conditions for these two phenomena. We then generalize the notions of Kunz posets and Kunz polytopes. Each integer point on a Kunz polytope corresponds to a commutative, cancellative semigroup. We determine which integer points on a given Kunz polytope correspond to semigroup in which $L(s+m) = L(s) + 1$ for all $s$ and similarly which integer points yield semigroups for which $\ell(s+m) = \ell(s) + 1$ for all $s$. 
\end{abstract}

\section{Introduction}
	\indent\indent In general, semigroups do not have unique factorization. One way to characterize this non-unique factorization is through the use of factorization invariants. Some of these invariants turn out to be eventually quasipolynomial in numerical semigroups, such as in \cite{chap09,chap14,oneill14,oneill17,barron2017set}. In this work, the factorization invariants we are considering are the extremal factorization lengths of elements of a semigroup.

	Let $S$ be a commutative semigroup. We use $+$ for the semigroup operation and we assume that $S$ has an identity denoted as $0$ unless otherwise noted. We say that $s$ is a \textbf{unit} if there exists $-s \in S$ such that $s + -s = 0$. If $0$ is the only unit of $S$, then we say that $S$ is \textbf{reduced}. If we let $U(S)$ denote the group of units of $S$, then we can define $S_{\textup{red}} \coloneqq S/U(S)$. We use the notation $[s]$ for an element $s + U(S) \in S_\textup{red}$. Note that $S_{\textup{red}}$ is reduced. Doing this reduction does not affect factorization lengths of elements of a semigroup, since the factorization lengths of associate elements are the same, so it suffices to consider reduced semigroups for the purposes for factorization lengths. We say that two elements $s, t \in S$ are \textbf{associate}, denoted as $s \sim t$, if there exists some unit $u \in S$ such that $s + u = t$.

	If $S$ is a commutative, cancellative semigroup, then $S$ can be embedded into a group called the \textbf{group of differences} $\mathsf{gp}(S) \coloneqq \{a-b \mid a,b \in S \}$, in which for all $a,b,c,d \in S$, we have $(a-b) + (c-d) = (a+c) - (b+d)$ and $a-b = c-d$ if and only if $a+d = c+b$.

	An important family of commutative, cancellative semigroups is the family of numerical semigroups. A reference for numerical semigroups can be found in \cite{rosales2009numerical}. A \textbf{numerical semigroup} is a semigroup contained in $\N$ with finite complement in $\N$. In this paper, we assume that $0$ is contained in $\N$. Alternatively, we can define a numerical semigroup by taking $n_1, \dots, n_k \in \N\setminus\{0\}$ such that $\gcd(n_1, \dots, n_k) = 1$ and generating the semigroup
	\[
		\langle n_1, \dots, n_k \rangle \coloneqq \{c_1n_1 + \cdots + c_kn_k \mid c_1, \dots, c_k \in \N \}. 
	\]
	We say that $S = \langle n_1, \dots, n_k \rangle$ is \textbf{minimally generated} by $\{ n_1, \dots, n_k \}$ if there is no proper subset of $\{ n_1, \dots, n_k \}$ that generates $S$. More generally, for a set $X$ contained in an abelian group $\Gamma$, we denote by $\langle X \rangle$ the semigroup generated by $X$ in $\Gamma$. Equivalently, this is the smallest semigroup contained in $\Gamma$ containing $X$. 
	
	Now we assume that $S$ is a commutative semigroup. Take $s \in S$ to be a nonzero, nonunit element. We say that $s$ is \textbf{irreducible} or $s$ is an \textbf{atom} if whenever $a+b = s$ for some $a, b \in S$, we have $a$ is a unit or $b$ is a unit. We denote by $\mathcal{A}(S)$ the set of atoms of $S$. For any element $s \in S$, we define the \textbf{set of factorizations} as
	\[
		\mathsf{Z}(s) \coloneqq \left\{(c_{[a]}) \in \N^{\mathcal{A}(S_\textup{red})}\,\middle\vert\, \sum_{[a] \in \mathcal{A}(S_\textup{red})} c_{[a]} [a] = [s] \right\}.
	\]
	For convenience, we write $(c_a)$ for $(c_{[a]})$. In this paper, we also use the notation $\N^X$ to denote the direct sum of copies of $\N$ indexed by a set $X$. Note that since $(c_a)$ has only finitely many nonzero elements, the sum $\sum_{[a] \in \mathcal{A}(S_\textup{red})} c_a [a]$ is a finite sum. The set of factorizations can be thought of as all the ways $s$ can be written as the sum of atoms, up to association. Alternatively, we can first define the \textbf{factorization homomorphism} of $S$ as the function $\varphi_S: \N^{\mathcal{A}(S_\textup{red})} \to S_\textup{red}$ defined by 
	\[
		\varphi_S(c_a) = \sum_{[a] \in \mathcal{A}(S_\textup{red})} c_a [a] 
	\] 
	and define the set of factorizations as $\mathsf{Z}(s) = \varphi_S^{-1}([s])$. For convenience, we also define $\varphi_S(c_a) = \sum_{[a] \in X} c_a [a]$ if $(c_a) \in \N^X$ and $X$ is a subset of $\mathcal{A}(S_\textup{red})$, and since the set of factorizations does not depend on the representatives of the elements of $S_\textup{red}$, we take $\varphi_S(c_a)$ to also mean any element $s \in S$ such that $[s] = \varphi_S(c_a)$. Additionally, we define $e_a$ to be element of $\N^X$ that is $1$ at the coordinate indexed by $[a]$ and $0$ everywhere else. We also consider $\N^X$ to be equipped with the usual coordinatewise addition and subtraction. Lastly, we put a partial order on $\N^X$ where $(c_a) \leq (c_a')$ if and only if $c_a \leq c_a'$ for all $[a] \in X$. 
	
	One way we can analyze the set of factorizations is to look at the length of each factorization. More generally, if we have an element $(c_\lambda)_{\lambda \in \Lambda} \in \N^\Lambda$, we can define the \textbf{length} of $(c_\lambda)$ as $\abs{(c_\lambda)} = \sum_{\lambda \in \Lambda} c_\lambda$. Note that this is again a finite sum. Using this, for an element $s \in S$, we can define its \textbf{set of lengths} to be
	\[
		\mathcal{L}(s) \coloneqq \left\{\abs{(c_a)} \,\middle\vert\, (c_a) \in \mathsf{Z}(s) \right\}. 
	\]
	If $\mathcal{L}(s)$ is not empty, or equivalently if $\mathsf{Z}(s)$ is not empty, we can derive two important invariants from this set, namely
	\[
		L(s) \coloneqq \sup \mathcal{L}(s) \quad \text{and} \quad \ell(s) \coloneqq \min \mathcal{L}(s),
	\]
	the \textbf{longest factorization length} and \textbf{shortest factorization length} of $s$ in $S$, respectively. For the factorization invariants, we may use a subscript to emphasize the semigroup to which the element belongs. For example, $\mathsf{Z}_S(s), L_S(s),$ and $\ell_S(s)$. The longest and shortest factorization lengths in a numerical semigroup behave nicely according to the following result from \cite{barron2017set}.

	\begin{theorem}\cite[Theorems 4.2 and 4.3]{barron2017set}\label{Thm:EventualQuasilinearity}
		Let $\langle n_1, \dots, n_k \rangle$ be a numerical semigroup minimally generated by $n_1, \dots, n_k \in \N$ such that $n_1 < \cdots < n_k$. The following statements hold.
		\begin{enumerate}
			\item 	If $n > (n_1-1)n_k-n_1$, then $L(n+n_1) = L(n) + 1$.
			\item 	If $n > (n_{k}-1)n_{k-1} - n_k$, then $\ell(n+n_k) = \ell(n) + 1$.
		\end{enumerate}
	\end{theorem}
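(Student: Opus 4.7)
The plan is to establish each part by bounding $L$ (resp.\ $\ell$) in both directions. In each case one inequality is easy: appending $n_1$ to a longest factorization of $n$ shows $L(n+n_1) \geq L(n)+1$, and appending $n_k$ to a shortest factorization of $n$ shows $\ell(n+n_k) \leq \ell(n)+1$. The substance of the theorem lies in the reverse inequalities, and the essential content is to force a longest factorization of $n+n_1$ to contain $n_1$ (and a shortest factorization of $n+n_k$ to contain $n_k$); once this is done, stripping off one copy of the distinguished generator produces the required factorization of $n$.

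For (1), fix any longest factorization $(c_1, \dots, c_k)$ of $n+n_1$ of length $L'$; I must show $c_1 \geq 1$. Assuming for contradiction that $c_1 = 0$, we have $n+n_1 = \sum_{j \geq 2} c_j n_j \leq L' n_k$, while the hypothesis $n > (n_1-1)n_k - n_1$ rearranges to $n+n_1 > (n_1-1)n_k$, forcing $L' \geq n_1$. The key step is then a pigeonhole: enumerate the $L'$ atoms appearing in the factorization as $a_1, \dots, a_{L'} \in \{n_2, \dots, n_k\}$ and consider the $L'+1 \geq n_1+1$ partial sums $T_0 = 0, T_1, \dots, T_{L'}$ modulo $n_1$. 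Two must coincide, yielding a consecutive sub-sum $a_{i+1} + \cdots + a_j = m n_1$ with $m \geq 1$. Since each atom used is at least $n_2 > n_1$, writing $t = j-i$ gives $m n_1 \geq t n_2 > t n_1$, so $m \geq t+1$. Swapping those $t$ atoms for $m$ copies of $n_1$ produces a factorization of $n+n_1$ of length $L' - t + m \geq L'+1$, contradicting the maximality of $L'$. Hence $c_1 \geq 1$, and removing one $n_1$ yields $L(n) \geq L' - 1$, completing the matching upper bound.

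Part (2) is structurally identical with $n_1$ and $n_k$ interchanged and the inequalities reversed. Given a shortest factorization $(c_1, \dots, c_k)$ of $n+n_k$ of length $\ell'$ with $c_k = 0$, the hypothesis $n > (n_k-1)n_{k-1} - n_k$ forces $\ell' \geq n_k$ via $\ell' n_{k-1} \geq n+n_k > (n_k-1)n_{k-1}$. The same pigeonhole mod $n_k$ produces a consecutive sub-sum equal to $m n_k$ with $m \geq 1$ and $t$ atoms; this time each atom is at most $n_{k-1} < n_k$, so $m n_k \leq t n_{k-1} < t n_k$, giving $m \leq t-1$. The same swap yields a strictly shorter factorization, contradicting minimality.

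The step I expect to be the main obstacle is engineering the pigeonhole so that the resulting trade strictly violates the extremality assumption in the right direction; the bound on $n$ must be calibrated precisely to push $L'$ (resp.\ $\ell'$) past the threshold $n_1$ (resp.\ $n_k$) at which the pigeonhole applies, which is exactly what produces the sharp constants $(n_1-1)n_k - n_1$ and $(n_k-1)n_{k-1} - n_k$ in the statement.
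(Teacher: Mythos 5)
The paper states this as a result imported from Barron, O'Neill, and Pelayo (Theorems 4.2 and 4.3 of that reference) and does not give its own proof, so there is no internal argument to compare against. That said, your proof is correct and self-contained: the two easy inequalities $L(n+n_1)\ge L(n)+1$ and $\ell(n+n_k)\le \ell(n)+1$ are immediate, and the reverse inequalities are obtained by showing that any extremal factorization of $n+n_1$ (resp.\ $n+n_k$) must use $n_1$ (resp.\ $n_k$). Your mechanism is a pigeonhole on the $L'+1$ partial sums of the atoms modulo $n_1$, extracting a consecutive subsum equal to $m n_1$ with $m\ge t+1$ because each atom exceeds $n_1$ (and dually $m\le t-1$ for $n_k$ because each atom is below $n_k$), and then swapping those $t$ atoms for $m$ copies of the distinguished generator to strictly lengthen (resp.\ shorten) the factorization; the length bound $L'\ge n_1$ (resp.\ $\ell'\ge n_k$) needed to trigger the pigeonhole is exactly what the hypothesis on $n$ delivers. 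This is a clean and valid derivation of the stated thresholds.
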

	
	It is possible that for a numerical semigroup $S = \langle n_1, \dots, n_k \rangle$ minimally generated by $n_1, \dots, n_k \in \N$ such that $n_1 < \cdots < n_k$ to have $L(n+n_1) = L(n) + 1$ hold for all $n \in S$. Similarly, it is possible that $\ell(n+n_k) = \ell(n) + 1$ for all $n \in S$. For example, for $S = \langle 6, 9, 20 \rangle$, we have $L(n+6) = L(n) + 1$ and $\ell(n+20) = \ell(n) + 1$ for all $n \in S$ \cite[Theorems 4.1 and 4.2]{chapman2018factoring}. 
	
	The question of when $L(n+n_1) = L(n) + 1$ for all $n \in S$ and when $\ell(n+n_k) = \ell(n) + 1$ for all $n \in S$ for a numerical semigroup $S$ was posed in Question 4.5 in \cite{chapman2018factoring}. This question was answered using Betty elements for the case of numerical semigroups of embedding dimension three in \cite{LiuYap}. We generalize this question further. Take a commutative, cancellative semigroup $S$. We want to determine for each atom $m \in \mathcal{A}(S)$ whether $L(s+m) = L(s) + 1$ for all $s \in S$ and whether $\ell(s+m) = \ell(s) + 1$ for all $s \in S$. One of the ways we will characterize when $L(s+m) = L(s) + 1$ for all $s \in S$ and when $\ell(s+m) = \ell(s) + 1$ is through generalizing Kunz posets and Kunz polytopes. 
	
	Kunz posets and Kunz polytopes have been used recently to study numerical semigroups \cite{WilfConj, Kunz1, Kunz2, Kunz3}. The advantage of this is that it allows for families of numerical semigroups with similar properties to be studied at once. These tools have been used to verify Wilf's Conjecture for numerical semigroups of small multiplicity, study the oversemigroups of a numerical semigroup, and describe the minimal presentations of a numerical semigroup. We extend these tools to the setting of general commutative, cancellative semigroups in order to study the longest and shortests factorization lengths of semigroup elements. 	
	
	Since we will be utilizing posets, and more generally preordered sets, we set up some definitions and notation surrounding these ideas. Let $X$ be a set. A \textbf{preorder} on $X$ is a binary relation $\sqsubseteq$ such that $a \sqsubseteq a$ for all $a \in X$ and for all $a, b, c \in X$, if $a \sqsubseteq b$ and $b \sqsubseteq c$, then $a \sqsubseteq c$. The set $X$ along with a preorder on $X$ is a \textbf{preordered set}. Note that a partial order is a preorder and a poset is a preordered set.

 	We also use the notation 
 	\[
		\min\limits_{\leq} X \coloneqq \{ x \in X \mid \not\exists\, y \in X \setminus\{x\} \text{ s. t. }y \sqsubseteq x  \}
	\] to denote the set of minimal elements of $X$ under a partial order $\leq$, and
	\[
		\pmin\limits_{\sqsubseteq} X \coloneqq \{x \in X \mid \forall\, y \in X\,( y \sqsubseteq x \implies x \sqsubseteq y) \}
	\]
	to denote the set of \textbf{pseudominimal} elements of $X$ under a preorder $\sqsubseteq$. 
	
	Let $S$ be a commutative, cancellative semigroup and $m \in \mathcal{A}(S)$. In this paper, in Section \ref{Sect:Exceptions} we first find potential exceptions $s \in S$ to the formula $L(s+m) = L(s) + 1$ that will indicate whether if $S$ has any exceptions to the formula. We also show that there is an analogous set for detecting exceptions to the $\ell(s+m) = \ell(s) + 1$ formula. Furthermore, we give results to reduce the size of the set of potential exceptions. In Section \ref{Sect:Kunz}, we generalize Kunz posets and Kunz polytopes to analyze the longest and shortest length formulas for an entire family of semigroups at once. This allows us to determine inequalities that dictate which semigroups in the family have exceptions to the longest and shortest length formulas and which ones do not. 
	
\section{Possible exceptions}\label{Sect:Exceptions}

	\indent\indent According to Theorem \ref{Thm:EventualQuasilinearity}, for a numerical semigroup $S$ minimally generated by $n_1, \dots, n_k \in \N$ with $n_1 < \cdots < n_k$, we have $L(s + n_1) = L(s) + 1$ for sufficiently large $s \in S$ and $\ell(s+ n_k) = \ell(s) + 1$ for sufficiently large $s \in S$. To see if these formulas hold for all $s \in S$, there are only a finite number of possible exceptions to check. This set of potential exceptions can still be quite large. It turns out not all of the small elements need to be checked. For example, for a numerical semigroup generated by three elements, it suffices to check the validity of the formulas at only a single element according to the following theorem.
	\begin{theorem}\cite[Theorem 3.3]{LiuYap}\label{Thm:LiuYap}
		Let $S = \langle n_1, n_2, n_3 \rangle$ be a numerical semigroup minimally generated by $n_1, n_2, n_3 \in \N$ with $n_1 < n_2 < n_3$. Then 
		\begin{enumerate}
			\item $L(s + n_1) = L(s) + 1$ for all $s \in S$ if and only if $L(\alpha(S)n_2) = L(\alpha(S)n_2 - n_1) + 1$ and
			\item $\ell(s+n_3) = \ell(s) + 1$ for all $s \in S$ if and only if $\ell(\beta(S)n_2) = \ell(\beta(S)n_2-n_3) + 1$, 
		\end{enumerate}
		where $\alpha(S) = \min\{c \in \N \mid cn_2 - n_1 \in S \}$ and $\beta(S) = \min\{c \in \N \mid cn_2 - n_3 \in S\}$. 
	\end{theorem}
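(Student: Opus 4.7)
The plan is to prove both parts by contrapositive, reducing an arbitrary exception to a contradiction via the minimality that defines $\alpha(S)$ or $\beta(S)$. The forward implications in (1) and (2) are immediate: specializing the universally quantified equality to $s = \alpha(S) n_2 - n_1$, respectively $s = \beta(S) n_2 - n_3$, each of which lies in $S$ by the very definitions of $\alpha(S)$ and $\beta(S)$, recovers the single equality on the right.

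For the reverse direction of (1), I would first record the elementary fact that $L(s + n_1) \geq L(s) + 1$ always (append one $n_1$), with equality iff some longest factorization of $s + n_1$ uses $n_1$. Thus an exception $s$ is characterized by the property that every longest factorization of $s + n_1$ avoids $n_1$, and produces a longest factorization $s + n_1 = c_2 n_2 + c_3 n_3$ with $c_2 + c_3 \geq L(s) + 2$. The structural crux is to show $c_2 \geq \alpha(S)$. I would establish this by comparing $(0, c_2, c_3)$ with $(d_1 + 1, d_2, d_3)$, the factorization of $s + n_1$ obtained by appending $n_1$ to a longest factorization $(d_1, d_2, d_3)$ of $s$. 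The resulting cross-factorization identity $(c_2 - d_2) n_2 + (c_3 - d_3) n_3 = (d_1 + 1) n_1$, combined with the length bound $(c_2 - d_2) + (c_3 - d_3) \geq d_1 + 2$, admits a case analysis on the signs of $c_2 - d_2$ and $c_3 - d_3$ (possibly after refining the choice of $(d_1, d_2, d_3)$) yielding an element $e n_2 - n_1 \in S$ with $1 \leq e \leq c_2$, whence $\alpha(S) \leq e \leq c_2$ by minimality of $\alpha(S)$. Once $c_2 \geq \alpha(S)$, the hypothesis furnishes a longest factorization $\alpha(S) n_2 = f_1 n_1 + f_2 n_2 + f_3 n_3$ with $f_1 \geq 1$; substituting this into $s + n_1 = (c_2 - \alpha(S)) n_2 + \alpha(S) n_2 + c_3 n_3$ produces a factorization of $s + n_1$ of length $c_2 + c_3 + (L(\alpha(S) n_2) - \alpha(S)) \geq c_2 + c_3 = L(s + n_1)$ that uses $n_1$, contradicting the characterization that every longest factorization of $s + n_1$ avoids $n_1$.

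Part (2) proceeds dually. An exception $s$ to $\ell(s + n_3) = \ell(s) + 1$ yields a shortest factorization $s + n_3 = c_1 n_1 + c_2 n_2$ avoiding $n_3$ with $c_1 + c_2 \leq \ell(s)$; the analogous structural step produces $c_2 \geq \beta(S)$ by minimality of $\beta(S)$; and substituting a shortest factorization of $\beta(S) n_2$ that uses $n_3$—guaranteed by the hypothesis—into $s + n_3 = c_1 n_1 + (c_2 - \beta(S)) n_2 + \beta(S) n_2$ produces a factorization of $s + n_3$ using $n_3$ of length at most $c_1 + c_2 = \ell(s + n_3)$, contradicting the characterization of the exception. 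The main obstacle in both parts is the structural reduction $c_2 \geq \alpha(S)$, respectively $c_2 \geq \beta(S)$: the case analysis is delicate, and a judicious choice of the auxiliary longest or shortest factorization of $s$ may be required so that the identity extracted from the comparison genuinely exhibits an element witnessing the minimality of $\alpha(S)$ or $\beta(S)$.
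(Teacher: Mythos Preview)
Your overall strategy is sound and, in fact, takes a more direct route than the paper. The paper does not prove this statement independently; it cites \cite{LiuYap} and later (in the Remark following Theorem~\ref{Thm:NumericalPotentialExceptions}) observes that the three-generator case is a specialization of the general machinery: one builds $\mathsf{MinRepl}_m(S)$, passes to divisibility-minimal elements (Corollary~\ref{Cor:Reduction}), and then discards all left-zero (resp.\ right-zero) tuples via the size inequality in Theorem~\ref{Thm:NumericalPotentialExceptions}, leaving only $(\alpha(S),0)$ (resp.\ $(0,\beta(S))$). Your contrapositive-plus-substitution argument bypasses this apparatus entirely; it is more elementary and tailored to embedding dimension three, while the paper's route is uniform in the number of generators.

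There is, however, a gap in how you propose to establish the ``structural crux'' $c_2 \geq \alpha(S)$. Your plan is to extract from the cross-identity $(c_2-d_2)n_2+(c_3-d_3)n_3=(d_1+1)n_1$ an element $e n_2 - n_1 \in S$ with $1\le e\le c_2$. That works when $c_3\le d_3$ (take $e=c_2-d_2$), and the case $c_2>d_2,\,c_3>d_3$ is impossible since then $(d_1+1)n_1 > (c_2-d_2+c_3-d_3)n_1 \ge (d_1+2)n_1$. But in the remaining case $c_2\le d_2,\,c_3>d_3$ there is no such $e$, and no ``refinement of $(d_1,d_2,d_3)$'' produces one. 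The correct resolution is that this case is \emph{also} impossible: from $(c_3-d_3)n_3=(d_1+1)n_1+(d_2-c_2)n_2$ and $n_1<n_2<n_3$ one gets $(c_3-d_3)n_3 < (d_1+1+d_2-c_2)n_2 < (d_1+1+d_2-c_2)n_3$, hence $c_2+c_3 < d_1+d_2+d_3+1 \le L(s)+1 \le L(s+n_1)=c_2+c_3$, a contradiction. (This is exactly the size comparison underlying the left-zero argument in Theorem~\ref{Thm:NumericalPotentialExceptions}.) The dual inequality handles part~(2) identically. With this fix in place your argument is complete.
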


	We will show that for a general commutative, cancellative semigroup $S$ such that $L(s)$ is finite for all $s \in S$, to check that for some $m \in \mathcal{A}(S)$ that $L(s+m) = L(s) + 1$ for all $s \in S$, it suffices to verify the formula for all $s$ in some proper subset of $S$. The analogous statement holds for $\ell(s+m) = \ell(s) + 1$ for some fixed $m \in \mathcal{A}(S)$. If $S$ is a numerical semigroup, this proper subset of $S$ will be much smaller than the set of semigroup elements no more than the thresholds given in Theorem \ref{Thm:EventualQuasilinearity}. To arrive at a smaller set of potential exceptions, we make use of the following lemma. This is a generalization of Lemma 3.1 of \cite{LiuYap}. 
	
	\begin{lemma}\label{Lem:Noms}
		Let $S$ be a commutative, cancellative semigroup and take $s \in S$ and $m \in \mathcal{A}(S)$. Suppose that $s-m \in S$ and $L(s)$ is finite. Then
		\begin{enumerate}
			\item there exists $(c_a) \in \mathsf{Z}(s)$ such that $\abs{(c_a)} = L(s)$ with $c_m > 0$ if and only if $L(s) = L(s-m) + 1$, and
			\item there exists $(c_a) \in \mathsf{Z}(s)$ such that $\abs{(c_a)} = \ell(s)$ with $c_m > 0$ if and only if  $\ell(s) = \ell(s-m) + 1$. 
		\end{enumerate}
	\end{lemma}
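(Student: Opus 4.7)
The plan is to prove both parts by exhibiting a natural correspondence between factorizations of $s$ in which $m$ appears and factorizations of $s-m$. The key observation is that, since $S$ is cancellative and $m \in \mathcal{A}(S)$, one has the identities $\varphi_S\bigl((c_a) - e_m\bigr) = [s] - [m] = [s-m]$ whenever $(c_a) \in \mathsf{Z}(s)$ with $c_m \geq 1$, and $\varphi_S\bigl((d_a) + e_m\bigr) = [s-m] + [m] = [s]$ for any $(d_a) \in \mathsf{Z}(s-m)$. In particular, the maps
\[
    \{(c_a) \in \mathsf{Z}(s) \mid c_m > 0\} \to \mathsf{Z}(s-m), \qquad (c_a) \mapsto (c_a) - e_m,
\]
and
\[
    \mathsf{Z}(s-m) \to \{(c_a) \in \mathsf{Z}(s) \mid c_m > 0\}, \qquad (d_a) \mapsto (d_a) + e_m,
\]
are mutually inverse bijections, and each shifts the length by exactly $1$.

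Using this, I would establish part (1) as follows. For the forward direction, suppose $(c_a) \in \mathsf{Z}(s)$ has length $L(s)$ with $c_m > 0$. Then $(c_a) - e_m \in \mathsf{Z}(s-m)$ has length $L(s) - 1$, so $L(s-m) \geq L(s) - 1$; here I use that $L(s)$ is finite so that the value is actually realized. For the reverse inequality, pick any $(d_a) \in \mathsf{Z}(s-m)$; then $(d_a) + e_m \in \mathsf{Z}(s)$ has length $|(d_a)| + 1$, and taking a supremum gives $L(s-m) + 1 \leq L(s)$ (and in particular $L(s-m)$ is finite as well). Combining yields $L(s) = L(s-m) + 1$. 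For the backward direction, if $L(s) = L(s-m) + 1$, then any factorization $(d_a) \in \mathsf{Z}(s-m)$ of length $L(s-m)$ produces $(d_a) + e_m \in \mathsf{Z}(s)$ of length $L(s)$ with its $m$-coordinate at least $1$.

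The argument for part (2) is identical in structure but with the inequalities reversed. In the forward direction, a length-$\ell(s)$ factorization of $s$ with $c_m > 0$ yields a factorization of $s-m$ of length $\ell(s)-1$, giving $\ell(s-m) \leq \ell(s) - 1$; conversely, appending $m$ to a minimal-length factorization of $s-m$ produces a factorization of $s$, so $\ell(s) \leq \ell(s-m) + 1$. For the backward direction, a minimal-length factorization of $s-m$ together with an added $e_m$ is a factorization of $s$ of length $\ell(s-m) + 1 = \ell(s)$ with $c_m \geq 1$.

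There is no substantive obstacle here; the main thing to watch is that one should work with representatives in $S_{\text{red}}$ (using the identifications $[s-m] = [s] - [m]$ in $\mathsf{gp}(S_{\text{red}})$, which requires cancellativity to make sense) and appeal to finiteness of $L(s)$ precisely to extract a length-$L(s)$ factorization rather than merely an approaching sequence. Everything else is bookkeeping about the two inverse maps above.
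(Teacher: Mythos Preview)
Your proof is correct and follows essentially the same approach as the paper: both argue by shuttling factorizations between $\mathsf{Z}(s)$ and $\mathsf{Z}(s-m)$ via the maps $(c_a)\mapsto (c_a)\pm e_m$, deriving the two inequalities $L(s-m)\ge L(s)-1$ and $L(s)\ge L(s-m)+1$ (and their $\ell$-analogues), then handling the converse by appending $e_m$ to an extremal factorization of $s-m$. Your explicit framing of the mutually inverse bijections is a slight elaboration, but the content is the same.
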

	
	\begin{proof}
		Let $s \in S$ and suppose there exists $(c_a) \in \mathsf{Z}(s)$ such that $\abs{(c_a)} = L(s)$ with $c_m > 0$. Then $(c_a) - e_m \in \mathsf{Z}(s-m)$. This implies that \[L(s-m) \geq \abs{(c_a) - e_m} = \abs{(c_a)} - 1 = L(s) - 1.\]
		On the other hand, take $(b_a) \in \mathsf{Z}(s-m)$. Then $(b_a) + e_m \in \mathsf{Z}(s)$. This yields $L(s) \geq \abs{(b_a) + e_m} = \abs{(b_a)} + 1$, which means that
		\[
		 	L(s) \geq L(s-m) + 1.
		\]
		Combining the two inequalities $L(s-m) \geq L(s) - 1$ and $L(s) \geq L(s-m) + 1$ gives us that $L(s) = L(s-m) + 1$.
		
		Conversely, suppose that $L(s) = L(s - m) + 1$. Then take $(b_a) \in \mathsf{Z}(s-m)$ such that $\abs{(b_a)} = L(s-m)$. Now we have $(b_a) + e_m \in \mathsf{Z}(s)$. Since $\abs{(b_a) + e_m} = L(s-m) + 1 = L(s)$ and the coordinate indexed by $[m]$ in $(b_a) + e_m$ is strictly positive, the converse has been proven. 
		
		The proof of the statement concerning the shortest factorization length is similar. 
	\end{proof}
	
	If we fixed an $m \in \mathcal{A}(S)$ for a commutative, cancellative semigroup, then according to the previous lemma, for any exception to $L(s+m) = L(s) + 1$ where $s \in S$, any factorization of $s+m$ maximal length cannot an atom associate to $m$. Therefore, we should consider factorizations $(c_a)$ in $\mathsf{Z}(s+m)$ such that $c_m = 0$. Furthermore, since $(s+m)- m \in S$ and thus $[s+m] - [m] \in S_\textup{red}$, we want to consider sums of atoms of $S_\textup{red}$ that are not $[m]$ such that subtracting $[m]$ keeps us in $S_\textup{red}$. Furthermore, there exist sums of elements of $\mathcal{A}(S_{\textup{red}}) \setminus \{[m]\}$ that are in a sense minimal with respect to the property that subtracting $[m]$ yields an element of $S_\textup{red}$. An analogous line of thinking shows that the same idea can also be used to analyze $\ell(s+m) = \ell(s) + 1$.
	
	\begin{definition}	
	Let $S$ be a commutative, cancellative semigroup and let $m \in \mathcal{A}(S)$. We define
	\[
		\mathsf{Repl}_m(S) \coloneqq \{(c_a) \in \N^{\mathcal{A}(S_\textup{red}) \setminus \{[m]\}} | -m + \varphi_S(c_a) \in S \}.
	\]
	Note that the fact that $-m + \varphi_S(c_a) \in S$ does not depend on the representative of $\varphi_S(c_a)$ we choose. We can think of $\mathsf{Repl}_m(S)$ as factorizations without atoms associate to $m$ that can be replaced with another factorization of the same element that involves the atom $m$. To consider factorizations that are minimal with respect to this property, we define
	\[
		\mathsf{MinRepl}_m(S) \coloneqq \min\limits_{\leq} \mathsf{Repl}_m(S).
	\]
	Note that under the partial order on $\N^{\mathcal{A}(S_\textup{red}) \setminus \{[m]\}}$, for every $(c_a) \in \mathsf{Repl}_m(S)$, there is some $(c_a') \in \mathsf{MinRepl}_m(S)$ such that $(c_a') \leq (c_a)$. 
	\end{definition}
	
	\begin{lemma}\label{Lem:Minimalize}
		Let $S$ be a commutative, cancellative semigroup. Take $s \in S$ and $m \in \mathcal{A}(S)$ such that $s-m \in S$ and $L(s)$ is finite. Then
		\begin{enumerate}
			\item if $L(s) \neq L(s-m) + 1$, then for all $(c_a) \in \N^{\mathcal{A}(S_\textup{red}) \setminus \{[m]\}}$ such that $s \sim \varphi_S(c_a)$ and $L(s) = \abs{(c_a)}$ and for every $(c_a') \in \mathsf{MinRepl}_m(S)$ such that $(c_a') \leq (c_a)$, we have $L\left(\varphi_S(c_a')\right) \neq L\left(-m + \varphi_S(c_a') \right) + 1$, and 
			\item if $\ell(s) \neq \ell(s-m) + 1$, then for all $(c_a) \in \N^{\mathcal{A}(S_\textup{red}) \setminus \{[m]\}}$ such that $s \sim \varphi_S(c_a)$ and $\ell(s) = \abs{(c_a)}$ and for every $(c_a') \in \mathsf{MinRepl}_m(S)$ such that $(c_a') \leq (c_a)$, we have $\ell\left(\varphi_S(c_a')\right) \neq \ell\left(-m + \varphi_S(c_a')\right) + 1$. 
		\end{enumerate}
	\end{lemma}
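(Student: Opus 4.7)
The plan is to prove part (1); part (2) will follow by the same argument with $L$ replaced by $\ell$ throughout and the length inequality in the key step reversed. Fix $(c_a) \in \N^{\mathcal{A}(S_\textup{red}) \setminus \{[m]\}}$ with $s \sim \varphi_S(c_a)$ and $|(c_a)| = L(s)$, and fix $(c_a') \in \mathsf{MinRepl}_m(S)$ with $(c_a') \leq (c_a)$; such a $(c_a')$ exists because $s - m \in S$ forces $(c_a) \in \mathsf{Repl}_m(S)$, and every element of $\mathsf{Repl}_m(S)$ dominates some element of $\mathsf{MinRepl}_m(S)$ as observed after the definition. Let $t$ denote any representative of $\varphi_S(c_a')$, so $t - m \in S$ by definition of $\mathsf{Repl}_m(S)$.

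I would argue by contradiction: suppose $L(t) = L(t - m) + 1$. By Lemma~\ref{Lem:Noms}(1), there exists $(d_a) \in \mathsf{Z}(t)$ with $|(d_a)| = L(t)$ and $d_m > 0$. The key move is to form the element $(c_a) - (c_a') + (d_a)$, where $(c_a)$ and $(c_a')$ are viewed inside $\N^{\mathcal{A}(S_\textup{red})}$ by setting their $[m]$-coordinate to $0$. Since $(c_a') \leq (c_a)$ coordinatewise and $(d_a) \geq 0$, this sum lies in $\N^{\mathcal{A}(S_\textup{red})}$, and applying $\varphi_S$ in $\mathsf{gp}(S_\textup{red})$ gives $[s] - [t] + [t] = [s]$, so it is a factorization of $s$.

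Next I would use length bounds to pin down $|(c_a')|$. The length of the new factorization is $L(s) - |(c_a')| + L(t)$, and since $L(s)$ is the supremum of factorization lengths of $s$, this forces $L(t) \leq |(c_a')|$; the reverse inequality $|(c_a')| \leq L(t)$ holds because $(c_a')$ is itself a factorization of $t$. Hence $|(c_a')| = L(t)$, and the new factorization of $s$ has length exactly $L(s)$. But its $[m]$-coordinate equals $0 - 0 + d_m > 0$, so $s$ admits a maximum-length factorization involving $m$; by Lemma~\ref{Lem:Noms}(1), $L(s) = L(s - m) + 1$, contradicting the hypothesis of (1).

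The main obstacle is the bookkeeping around the swap $(c_a) \mapsto (c_a) - (c_a') + (d_a)$: one must check it defines a genuine element of $\N^{\mathcal{A}(S_\textup{red})}$ that $\varphi_S$ sends to $[s]$, and then extract both inequalities pinning down $|(c_a')| = L(t)$ from the single fact that $L(s)$ is the supremum for $s$. Once these are in hand the contradiction via Lemma~\ref{Lem:Noms} is immediate, and part (2) follows by the symmetric argument: the length of the constructed factorization is now $\ell(s) - |(c_a')| + \ell(t)$, which is bounded below by $\ell(s)$, yielding $|(c_a')| = \ell(t)$ and a shortest-length factorization of $s$ with positive $[m]$-coordinate, again contradicting the assumption via Lemma~\ref{Lem:Noms}(2).
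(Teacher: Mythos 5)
Your proof is correct and takes essentially the same approach as the paper: both proofs hinge on the same ``swap'' construction, replacing the subfactorization $(c_a')$ of $(c_a)$ with a factorization of $\varphi_S(c_a')$ that involves the atom $m$, producing a factorization of $s$ with positive $[m]$-coordinate and comparing its length against $L(s)$. The only difference is presentational: the paper argues directly, taking a maximal-length factorization of $-m + \varphi_S(c_a')$ and concluding the strict inequality $L(-m + \varphi_S(c_a')) + 1 < |(c_a')| \leq L(\varphi_S(c_a'))$, while you argue by contradiction, taking a maximal-length factorization of $\varphi_S(c_a')$ with positive $[m]$-coordinate via Lemma~\ref{Lem:Noms} and showing this would force $L(s) = L(s-m)+1$.
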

	
	\begin{proof}
		Suppose that $L(s) \neq L(s - m) + 1$. Let $(c_a) \in \N^{\mathcal{A}(S_\textup{red}) \setminus \{[m]\}}$ such that $s \sim \varphi_S(c_a)$ and $L(s) = \abs{(c_a)}$. Since $s - m \in S$, we have that $(c_a) \in \mathsf{Repl}_m(S)$. Now take some $(c_a') \in \mathsf{MinRepl}_m(S)$ such that $(c_a') \leq (c_a)$. Let $(b_a) \in \mathsf{Z}\left(-m + \varphi_S(c_a')\right)$ such that $\abs{(b_a)} = L\left(-m +\varphi_S(c_a')\right)$. Then
		\[
		(b_m+1)[m] + \sum_{[a] \in \mathcal{A}(S_\textup{red}) \setminus \{[m]\}} b_a [a] = \sum_{[a] \in \mathcal{A}(S_\textup{red}) \setminus \{[m]\}} c_a' [a].
		\]
		Adding $\sum_{[a] \in \mathcal{A}(S_\textup{red}) \setminus \{[m]\}}(c_a - c_a') [a]$ to both sides gives us
		\[
		(b_m+1)[m] + \sum_{[a] \in \mathcal{A}(S_\textup{red}) \setminus \{[m]\}} (b_a + c_a - c_a') [a] = \sum_{[a] \in \mathcal{A}(S_\textup{red}) \setminus \{[m]\}} c_a [a]= [s].
		\]
		Since the left hand side is a factorization of $s$ with at least one $m$, we must have
		\[
		\abs{(b_a)} + 1 + \abs{(c_a)} - \abs{(c_a')} < L(s+m) = \abs{(c_a)}
		\]
		by Lemma \ref{Lem:Noms}. Thus, we must have
		\[
		L\left(-m + \varphi_S(c_a') \right) + 1 = \abs{(b_a)} + 1 < \abs{(c_a')} \leq L\left(\varphi_S(c_a')\right),
		\]
		showing that $L\left(\varphi_S(c_a')\right) \neq L\left(-m + \varphi_S(c_a')\right) + 1$. The proof for the second statement is similar.
	\end{proof}
	
	The following theorem shows that to find exceptions to $L(s+m) = L(s) + 1$ and $\ell(s+m) = \ell(s) + 1$, it suffices to look at elements $s + m$ such that $s+m \sim \varphi_S(c_a)$ for some $(c_a) \in \mathsf{MinRepl}_m(S)$. This does not, however, imply that all the exceptions are of this form. 
	
	\begin{theorem}\label{Thm:PotentialExceptions}
		Let $S$ be a commutative, cancellative semigroup such that $L(s)$ is finite for all $s \in S$. Then for $m \in \mathcal{A}(S)$, 
		\begin{enumerate}
			\item we have $L(s+m) = L(s) + 1$ for all $s \in S$ if and only if $L(\varphi_S(c_a)) = L(-m + \varphi_S(c_a)) + 1$ for all $(c_a) \in \mathsf{MinRepl}_m(S)$, and
			\item we have $\ell(s+m) = \ell(s) + 1$ for all $s \in S$ if and only if  $\ell(\varphi_S(c_a)) = \ell(-m + \varphi_S(c_a)) + 1$ for all $(c_a) \in \mathsf{MinRepl}_m(S)$.
		\end{enumerate}
	\end{theorem}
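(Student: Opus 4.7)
The forward direction is immediate: if $L(s+m) = L(s)+1$ holds for every $s \in S$, then for any $(c_a) \in \mathsf{MinRepl}_m(S)$, set $s = -m + \varphi_S(c_a) \in S$ (in $S$ by the defining property of $\mathsf{Repl}_m(S)$), so $\varphi_S(c_a) \sim s + m$ and the formula applied at $s$ gives $L(\varphi_S(c_a)) = L(-m + \varphi_S(c_a)) + 1$. The analogous argument handles the $\ell$ case.

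For the reverse direction, my plan is to argue the contrapositive: produce, from any exception $s \in S$, a witness $(c_a') \in \mathsf{MinRepl}_m(S)$ violating the right-hand formula. Suppose $L(s+m) \neq L(s) + 1$. Since $(s+m) - m = s \in S$ and $L(s+m)$ is finite, Lemma \ref{Lem:Noms} applied to $s+m$ tells us that \emph{every} factorization of $s+m$ of maximum length has $c_m = 0$. Pick any such maximum-length factorization; by discarding the $[m]$-coordinate (which is zero) we obtain $(c_a) \in \N^{\mathcal{A}(S_\textup{red}) \setminus \{[m]\}}$ with $\varphi_S(c_a) \sim s+m$ and $\abs{(c_a)} = L(s+m)$. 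Because $-m + \varphi_S(c_a) = s \in S$, we have $(c_a) \in \mathsf{Repl}_m(S)$, and so by the definition of $\mathsf{MinRepl}_m(S)$ there exists $(c_a') \in \mathsf{MinRepl}_m(S)$ with $(c_a') \leq (c_a)$.

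Now I can invoke Lemma \ref{Lem:Minimalize}, applied with $s+m$ in place of $s$. Its hypothesis $L(s+m) \neq L((s+m)-m) + 1$ is exactly our standing assumption, and we have just produced the $(c_a)$ and $(c_a')$ required by its conclusion. The lemma then yields $L(\varphi_S(c_a')) \neq L(-m + \varphi_S(c_a')) + 1$, so $(c_a') \in \mathsf{MinRepl}_m(S)$ is the desired witness. The $\ell$ version is identical, using the second clauses of both preceding lemmas.

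There is no real obstacle here once Lemmas \ref{Lem:Noms} and \ref{Lem:Minimalize} are in hand: the argument is essentially a bookkeeping step that (i) uses Lemma \ref{Lem:Noms} to find a max-length factorization of $s+m$ avoiding $[m]$, (ii) pulls it down to a minimal element of $\mathsf{Repl}_m(S)$, and (iii) feeds the result into Lemma \ref{Lem:Minimalize}. The only minor subtlety worth flagging in the write-up is the passage between $S$ and $S_\textup{red}$ when writing $\varphi_S(c_a) \sim s+m$, but this is already built into the paper's conventions for $\varphi_S$ and $\mathsf{Repl}_m(S)$.
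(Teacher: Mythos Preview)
Your proof is correct and follows essentially the same approach as the paper: both argue the contrapositive by using Lemma~\ref{Lem:Noms} to obtain a maximum-length factorization of $s+m$ avoiding $[m]$, descend to a minimal element of $\mathsf{Repl}_m(S)$, and then invoke Lemma~\ref{Lem:Minimalize}. The only differences are expository (you spell out the forward direction and the $S$ vs.\ $S_\textup{red}$ passage slightly more explicitly).
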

	
	\begin{proof}
		The forward directions of both statements are immediate.

		Now fix $m \in \mathcal{A}(S)$. Suppose that there exists $s \in S$ such that $L(s+m) \neq L(s) + 1$. By Lemma \ref{Lem:Noms}, for every $(c_a) \in \mathsf{Z}(s+m)$ such that $\abs{(c_a)} = L(s+m)$, we must have $c_m = 0$. Fix some $(c_a) \in \mathsf{Z}(s+m)$ such that $\abs{(c_a)} = L(s+m)$. Since $s+m - m \in S$ and $c_m = 0$, we have that $\widehat{(c_a)} \in \mathsf{Repl}_m(S)$, where $\widehat{(c_a)}$ is $(c_a)$ with the entry indexed by $[m]$ removed. Then there must be some $(c_a') \in \mathsf{MinRepl}_m(S)$ such that $(c_a') \leq \widehat{(c_a)}$. By Lemma \ref{Lem:Minimalize}, we know that $L(\varphi_S(c_a')) \neq L(-m + \varphi_(c_a')) + 1$ for some $(c_a') \in \mathsf{MinRepl}_m(S)$. The proof for the second statement is similar.
	\end{proof}
	
	We may not even need to check every element of $\mathsf{MinRepl}_m(S)$. It is possible that for distinct $(c_a), (c_a') \in \mathsf{MinRepl}_m(S)$, we have $\varphi_S(c_a) \sim \varphi_S(c_a')$. 
	
	For the longest factorization length, we always have $L(\varphi_S(c_a)) \geq L(-m + \varphi_S(c_a)) + 1$. For $L(\varphi_S(c_a)) > L(-m + \varphi_S(c_a)) + 1$ to happen, according to Lemma \ref{Lem:Noms}, it must be the case that for every $(b_a) \in \mathsf{Z}(\varphi_S(c_a))$ such that $\abs{(b_a)} = L(\varphi_S(c_a))$, we have $b_m = 0$. If every such factorization is length $2$, then for every factorization $(b_a') \in \mathsf{Z}(\varphi_S(c_a))$ with $b_m' > 0$, we must have $(b_a') = e_m$. This forces $\varphi_S(c_a) \sim m$, which is a contradiction since $m$ is an atom of $S$ and therefore cannot be written as the sum of atoms of $S$ that are not associates of $m$. Thus, if $s \in S$ is such that whenever $(c_a) \in \mathsf{MinRepl}_m(S)$ with the property $\varphi_S(c_a) \sim s$, we have $\abs{(c_a)} = 2$, then we must have $L(\varphi_S(c_a)) = L(-m + \varphi_S(c_a)) + 1$. The fact has no analogue for the shortest factorization length since the inequality that always holds true, $\ell(\varphi_S(c_a)) \leq \ell(-m + \varphi_S(c_a)) + 1$, is reversed.
	
	Furthermore, we use divisibility to potentially reduce the set of potential exceptions even further in the following corollary. For elements $s$ and $t$ in a commutative, cancellative semigroup $S$, we say that $s|t$, or $s$ \textbf{divides} $t$, if there exists some $s'\in t$ such that $s + s' = t$. We note that additionally, divisibility gives a preorder on $S$ and a partial order on $S$ if $S$ is reduced.  

	\begin{corollary}\label{Cor:Reduction}
		Let $S$ be a commutative, cancellative semigroup such that $L(s)$ is finite for all $s \in S$. Take $m \in \mathcal{A}(S)$ and set, as subset of $S_\textup{red}$, 
		\[
			M_m^{(1)}(S) \coloneqq \left\{ [s] \in \min\limits_|\{\varphi_S(c_a) \mid (c_a) \in  \mathsf{MinRepl}_m(S) \} \,\middle\vert\, \stackbox[l][l]{$\exists\, (c_a) \in \mathsf{MinRepl}_m(S)$ such that  \\ $\abs{(c_a)} > 2, \varphi_S(c_a) = [s]$ } \right\}
		\]
			and
		\[ 
			M_m^{(2)}(S) \coloneqq \min\limits_|\left\{ \varphi_S(c_a) \,\middle\vert\, (c_a) \in \mathsf{MinRepl}_m(S) \right\}.
		\]
		Then for $m \in \mathcal{A}(S)$, 
		\begin{enumerate}
			\item we have $L(s+m) = L(s) + 1$ for all $s \in S$ if and only if $L(s) = L(s-m) + 1$ for all $[s] \in M_m^{(1)}(S)$, and
			\item we have $\ell(s+m) = \ell(s) + 1$ for all $s \in S$ if and only if $\ell(s) = \ell(s-m) + 1$ for all $[s] \in M_m^{(2)}(S)$.
		\end{enumerate}
	\end{corollary}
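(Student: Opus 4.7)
The forward direction of both parts follows immediately from Theorem~\ref{Thm:PotentialExceptions}, since each $[s]\in M_m^{(1)}(S)$ is realized by some $(c_a)\in\mathsf{MinRepl}_m(S)$. I focus on the reverse direction of part~(1); part~(2) is entirely analogous using the second statements of Lemmas~\ref{Lem:Noms} and~\ref{Lem:Minimalize}. My plan is to prove by strong induction on $[s]$ in $\mathcal{V}:=\{\varphi_S(c_a):(c_a)\in\mathsf{MinRepl}_m(S)\}$ under divisibility in $S_\textup{red}$ that $L(s)=L(s-m)+1$ for every $[s]\in\mathcal{V}$; this suffices by Theorem~\ref{Thm:PotentialExceptions}. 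The induction is well-founded because any proper divisor of $[s]$ has strictly smaller $L$-value: if $a|b$ properly then $L(b)\geq L(a)+L(b-a)\geq L(a)+1$, and $L$ is finite by hypothesis.

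In the base case $[s]\in\min_|\mathcal{V}$, I split on whether $[s]\in M_m^{(1)}(S)$. If so, the hypothesis gives $L(s)=L(s-m)+1$ directly. Otherwise, every min-repl witness of $[s]$ has length exactly~$2$. Suppose for contradiction that $L(s)>L(s-m)+1$. Then Lemma~\ref{Lem:Noms} forces every max-length factorization $(b_a)\in\mathsf{Z}(s)$ to have $b_m=0$, so any such $(b_a)$ belongs to $\mathsf{Repl}_m(S)$; picking $(b_a')\in\mathsf{MinRepl}_m(S)$ with $(b_a')\leq(b_a)$ and applying Lemma~\ref{Lem:Minimalize} yields $L(\varphi_S(b_a'))\neq L(\varphi_S(b_a')-m)+1$. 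Since $\varphi_S(b_a')|\varphi_S(b_a)=s$ and $[s]\in\min_|\mathcal{V}$, we must have $\varphi_S(b_a')=s$, and then uniqueness of the empty factorization in $S_\textup{red}$ forces $(b_a)=(b_a')\in\mathsf{MinRepl}_m(S)$ with $\abs{(b_a)}=L(s)>2$, contradicting that every min-repl witness of $[s]$ has length~$2$.

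For the inductive step, assume $[s]\in\mathcal{V}\setminus\min_|\mathcal{V}$ and $L(s')=L(s'-m)+1$ for every $[s']\in\mathcal{V}$ properly dividing $[s]$. Supposing $L(s)>L(s-m)+1$, the same chain of Lemmas~\ref{Lem:Noms} and~\ref{Lem:Minimalize} produces $(b_a')\in\mathsf{MinRepl}_m(S)$ below a max-length $(b_a)\in\mathsf{Z}(s)$ with $b_m=0$ and with $\varphi_S(b_a')$ bad; if $\varphi_S(b_a')$ properly divided $s$ the inductive hypothesis would contradict this, so $\varphi_S(b_a')=s$ and consequently $(b_a)\in\mathsf{MinRepl}_m(S)$ with $\abs{(b_a)}=L(s)>2$. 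The main technical obstacle is to close the argument from this point: $[s]$ admits a proper divisor $[s']\in\mathcal{V}$ for which the inductive hypothesis yields $L(s')=L(s'-m)+1$, and hence a max-length factorization $(e_a)\in\mathsf{Z}(s')$ with $e_m>0$. I plan to combine $(e_a)$ with a factorization of $t:=s-s'$ and exploit componentwise minimality of $(b_a)$ in $\mathsf{Repl}_m(S)$---which forbids any min-repl witness of $[s']$ from appearing as a sub-factorization of $(b_a)$---to manufacture either a max-length factorization of $s$ that uses $m$ (contradicting Lemma~\ref{Lem:Noms} applied to $s$) or a proper sub-factorization of $(b_a)$ lying in $\mathsf{Repl}_m(S)$ (contradicting minimality of $(b_a)$). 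This closing combinatorial argument is the crux of the proof.
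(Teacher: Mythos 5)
Your base case (split on whether the divisibility-minimal $[s]$ lies in $M_m^{(1)}(S)$ or not) is essentially sound and matches the spirit of the paper's case analysis, though you should spell out why $L(s)>2$ in the subcase $[s]\notin M_m^{(1)}(S)$: from $L(s)>L(s-m)+1\geq 1$ one gets $L(s)\geq 2$, and $L(s)=2$ would force $L(s-m)=0$, hence $s\sim m$, which is impossible since $s$ is a sum of atoms not associate to $m$. The real problem is the inductive step, which you yourself flag as unfinished. The paper does not need an induction at all: given a bad $(b_a)\in\mathsf{MinRepl}_m(S)$, it immediately passes to a divisibility-minimal $[s]\in\min_|\{\varphi_S(c_a)\mid (c_a)\in\mathsf{MinRepl}_m(S)\}$ dividing $\varphi_S(b_a)$ and derives a contradiction by a direct two-case argument on whether $[s]\in M_m^{(1)}(S)$, appealing to Lemma~\ref{Lem:Minimalize} in the first case and a length-$2$ analysis in the second. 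Your reformulation as a strong induction over $\mathcal{V}$ under divisibility forces you to handle elements $[s]\in\mathcal{V}\setminus\min_|\mathcal{V}$ separately, which is exactly the step you have not carried out; as written, the argument does not establish the reverse implication of part~(1).

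Beyond being incomplete, the closing strategy you sketch looks problematic. Combining a max-length factorization $(e_a)\in\mathsf{Z}(s')$ with $e_m>0$ together with a factorization $(f_a)$ of $s-s'$ yields a factorization of $s$ using $m$ of length $\abs{(e_a)}+\abs{(f_a)}\leq L(s')+L(s-s')$, and this sum can be strictly less than $L(s)$. So the manufactured factorization is not guaranteed to be max-length, and Lemma~\ref{Lem:Noms} will not give $L(s)=L(s-m)+1$. Your observation that no min-repl witness of $[s']$ can sit componentwise below $(b_a)$ is correct, since $(b_a)\in\mathsf{MinRepl}_m(S)$ means no proper sub-vector is in $\mathsf{Repl}_m(S)$, but that alone does not manufacture either of the two contradictions you target. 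You would need to argue that the length deficit vanishes, and nothing in the proposal supplies that. Either close this gap or, better, drop the induction and follow the paper's direct contradiction using a divisibility-minimal $[s]$.
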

	
	\begin{proof}
		If we replaced $M_m^{(1)}(S)$ and $M_m^{(2)}(S)$ with $\left\{ \varphi_S(c_a) \,\middle\vert\, (c_a) \in \mathsf{MinRepl}_m(S) \right\}$ in the statement, then we have true statements due to Theorem \ref{Thm:PotentialExceptions}. Now it remains to show that it is sufficient to only consider the minimal elements under divisibility. For the longest factorization length, we also have the lengths of sequences in $\mathsf{MinRepl}_m(S)$ that realize these minimal elements to consider.
		
		Suppose that for each $[s] \in M_m^{(1)}(S)$, we have that $L(s) = L(s-m) + 1$. Now suppose for a contradiction that there is some $(b_a) \in \mathsf{MinRepl}_m(S)$ such that $L\left(\varphi_S(b_a) \right)\neq L\left(-m + \varphi_S(b_a) \right) + 1$. Then there exists some $[s] \in \min\limits_|\{\varphi_S(c_a) \mid (c_a) \in  \mathsf{MinRepl}_m(S) \}$ such that $[s]$ divides $\varphi_S(b_a)$ in $S_\textup{red}$. Either $[s] \in M_m^{(1)}(S)$ or for all $(B_a) \in \mathsf{MinRepl}_m(S)$ such that $\varphi_S(B_a) = [s]$, we have $\abs{(B_a)} = 2$. We break into these two cases.
		
		\begin{enumerate}[i.]
			\item If $[s] \in M_m^{(1)}(S)$, then there exists some $(B_a) \in \mathsf{MinRepl}_m(S)$ such that $\abs{(B_a)} > 2$ and $\varphi_S(B_a) = s$. Since $[s] | \varphi_S(b_a)$, there exists $(B_a') \in \mathsf{Repl}_m(S)$ such that $(B_a) \leq (B_a')$ and $\varphi_S(B_a') = \varphi_S(b_a)$. However, since $L(\varphi_S(B_a')) \neq L(-m + \varphi_S(B_a')) + 1$ by assumption, we get that $L(s) \neq L(-m + s) + 1$ by Lemma \ref{Lem:Minimalize}, which is a contradiction. 
			
			\item Suppose for all $(B_a) \in \mathsf{MinRepl}_m(S)$ such that $\varphi_S(B_a) = [s]$, we have $\abs{(B_a)} = 2$. If a factorization of $s$ of length $L(s)$ can be realized by a factorization with at least one $m$, then $L(s) = L(s-m) + 1$, meaning that $L(\varphi_S(c_a)) = L(-m + \varphi_S(c_a)) + 1$ by Lemma \ref{Lem:Minimalize}, a contradiction. Thus, factorizations of $s$ of length $L(s)$ can only be realized by factorizations that do not contain $m$. Therefore, $L(s) = \max\{\abs{(c_a)} \mid (c_a) \in \mathsf{Repl}_m(S), \varphi_S(c_a) = [s] \}$. Since $[s]$ is minimal with respect to divisibility in the set $\{ \varphi_S(c_a) \mid (c_a) \in \mathsf{MinRepl}_m(S)\}$, we must have that each $(c_a) \in \mathsf{Repl}_m(S)$ such that $\varphi_S(c_a) = [s]$ is actually also in $\mathsf{MinRepl}_m(S)$. This implies that $L(s) = 2$. Together with the fact that $L(s) \geq L(-m + s) + 1$, this shows that $L(s) = L(-m + s) + 1$ only fails if $L(-m + s) = 0$, meaning $s \sim m$. However, this is a contradiction since $s \sim \varphi_S(b_a)$ implies $s$ is a sum of atoms of $S$ that are not associates of $m$.  
		\end{enumerate}
		
		Since both cases reach contradictions, we know that $L(\varphi_S(b_a)) = L(-m + \varphi_S(b_a)) + 1$ for all $(b_a) \in \mathsf{MinRepl}_m(S)$. The proof for the second statement is similar, but we do not have to consider the lengths of sequences in $\mathsf{MinRepl}_m(S)$ that evaluate to elements of $M_m^{(2)}(S)$ under the factorization homomorphism. 
	\end{proof}
	
	Now we take a look at examples of using the set $\mathsf{MinRepl}_m(S)$. We save an example with a numerical semigroup for later in the section since we will develop more tools specifically for numerical semigroups.

	\begin{example}
		Consider $\Z^2$ with coordinate-wise addition and the subsemigroup
		\[
			S \coloneqq \langle (3,0), (7,0), (11,0), (6,1), (0,3) \rangle.
		\]
		Note that $\mathcal{A}(S) = \{(3,0), (7,0), (11,0), (6,1), (0,3)\}$. Since $S$ is reduced, we may drop the square brackets from the notation.  
		
		We can calculate that $\mathsf{MinRepl}_{(3,0)}(S) = \{(3,0,0,0), (0,3,0,0), (0,0,3,0), (1,1,0,0)\}$ with the coordinates indexed by $(7,0), (11,0), (6,1), (0,3)$. Evaluating the elements of $\mathsf{MinRepl}_{(3,0)}(S) $ using $\varphi_S$ gives $(21,0), (33,0), (18,3), (18,0)$. We see that $(18,0)|(21,0)|(33,0)$ and $(18,0)|(18,3)$. Furthermore, only $(1,1,0,0) \in \mathsf{MinRepl}_{(3,0)}$, which is of length $2$, corresponds to the element $(18,0)$, so $M_m^{(1)}(S) = \emptyset$. Thus, $L(s + (3,0)) = L(s) + 1$ for all $s \in S$. 
	\end{example}
	
	\begin{example}
		Consider $\Z^2$ with coordinate-wise addition and the semigroup
		\[
			S \coloneqq \langle (2,0), (3,1), (0,5) \rangle.
		\]
		Note that $\mathcal{A}(S) = \{ (2,0), (3,1), (0,5)\}$. Again, $S$ is reduced, so we drop the square brackets from the notation.  
		
		We calculate that $\mathsf{MinRepl}_{(2,0)}(S) = \{(10,0)\}$, where the coordinates are indexed by $(3,1), (0,5)$. In order to use Theorem \ref{Thm:PotentialExceptions}, we calculate $L((30,10)) = 17$, realized only by $15(2,0) + 2(0,5)$, which has at least one $(2,0)$. Thus, by Lemma \ref{Lem:Noms}, we know that $L((30, 10)) = L((28, 10)) + 1$. Then, by Theorem \ref{Thm:PotentialExceptions}, we know that $L(s + (2,0)) = L(s) + 1$ for all $s \in S$. 
		
		Next, we calculate that $\mathsf{MinRepl}_{(3,1)}(S) = \{(15,2)\}$, in which the coordinates are indexed by $(2,0), (0,5)$. We know that $15(2,0) + 2(0,5) = (30,10)$. However, we have seen that $L((30,10)) = 17$ and the only factorization of $(30,10)$ of length $17$ does not contain a $(3,1)$. Thus, $L((30,10)) \neq L((27,9)) + 1$ by Lemma \ref{Lem:Noms}. Indeed, $L((27,9)) = 9$, realized by $9(3,1) = (27,9)$. 
		
		Lastly, we consider $(0,5)$. We find that $\mathsf{MinRepl}_{(0,5)}(S) = \{(0,10)\}$, where the coordinates are indexed by $(2,0), (3,1)$. Again, $L((30,10)) = 17$ and the only factorization of $(30,10)$ of length $17$ does contain a $(0,5)$, so $L((30,10)) = L((30,5)) + 1$ by Lemma \ref{Lem:Noms} and thus $L(s+(0,5)) = L(s) + 1$ for all $s \in S$. 
		
		To summarize the results for the longest factorization lengths, we have $L(s+(2,0)) = L(s+(0,5)) = L(s) + 1$ for all $s \in S$. 
		
		As for the shortest factorization lengths, each of the $\mathsf{MinRepl}_m(S)$ sets for $m \in \mathcal{A}(S)$ has a singleton which evaluates to $(30, 10)$ under $\varphi_S$.  We see that $\ell((30,10)) = 10$ and the only factorization of $(30,10)$ of length $10$ is $10(3,1)$, which contains only $(3,1)$. Thus, for $m \in \mathcal{A}(S)$, we have $\ell(s+m) = \ell(s) + 1$ for all $s \in S$ if and only if $m = (3,1)$ combining Lemma \ref{Lem:Noms} and Theorem \ref{Thm:PotentialExceptions}. 
	\end{example}
	
	The following example considers the multiplicative semigroup of a semigroup algebra. 
	
	\begin{example}
		Let $S$ be the nonzero elements of $\F_2[x^2, x^3] = \{a_0 + a_1x + a_2x^2 + \cdots + a_nx^n \in \F_2[x] \mid a_1 = 0 \}$ with multiplication as the semigroup operation. We see that $S$ is reduced so we can drop the square brackets that indicate elements of $S_\textup{red}$. We want to show that $L(s\cdot x^2) = L(s) + 1$ for all $s \in S$. 
		
		Let $(c_a) \in \mathsf{MinRepl}_{x^2}(S)$. We know that \[x^2 \cdot f = \prod\limits_{a \in \mathcal{A}(S) \setminus \{x^2\}} a^{c_a}\] for some $f \in S$. Since $\F_2[x^2, x^3]$ is contained in the unique factorization domain $\F_2[x]$, we know that $x$ divides some $g \in \mathcal{A}(S)\setminus\{x^2\}$ in $\F_2[x]$ for which $c_g > 0$. For such an element $g$, we have $g \in S$, so we must have $x^2$ divide $g$ in $\F_2[x]$. Suppose $x^2$ divides $g$ in $\F_2[x]$. We write $g = x^k \cdot h$, where $k \in \N$ with $k\geq 2$ and $h \in \F_2[x]$ with the constant term of $h$ being $1$. Furthermore, the coefficient of $x$ in $h$ is $1$ since otherwise $h \in S$, implying that $g \notin \mathcal{A}(S)$. If $k \geq 4$, then $g = x^2 \cdot x^{k-2}h$ contradicts the fact that $g \in \mathcal{A}(S)$, so $k \in \{2, 3\}$. We temporarily suppose that for all $a \in \mathcal{A}(S)\setminus\{x^2\}$ such that $c_a > 0$, we have that $x$ does not divide $a$ in $\F_2[x]$. Then we calculate
		\[
			f = x^{k-2} h g^{c_g-1} \prod_{a \in \mathcal{A}(S) \setminus\{x^2, g\}} a^{c_a}.
		\]
		Our temporary assumption forces $c_g > 1$. Suppose that $c_g = 1$. Then $k \in \{2, 3\}$ forces the coefficient of $x$ in $f$ to be $1$, contradicting the fact that $f \in S$. Therefore, we must have $c_g > 1$ or there exists some $G \in \mathcal{A}(S) \setminus\{x^2, g\}$ with $c_G > 0$ and $x$ dividing $G$ in $\F_2[x]$.  In both cases, we have $a_1, a_2 \in \mathcal{A}(S) \setminus \{x^2\}$ which may or may not be distinct, such that $(c_a) - e_{a_1} - e_{a_2} \in \N^{\mathcal{A}(S)\setminus\{m\}}$ and $a_i = x^{k_i} \cdot A_i$, where $k_i \in \{2,3\}$ and $A_i \in \F_2[x]$ has the coefficient $1$ for both the constant term and the degree one term for $i \in \{1,2\}$. Then, 
		\[
			a_1a_2 = x^{k_1}(1+x+A_1')x^{k_2}(1+x+A_2') = x^{k_1+k_2}(1+x^2 + A_1'+A_1'x+A_2'+A_2'x+A_1'A_2'),
		\]
		where $A_i' = 1+x+A_i$ and thus $x^2$ divides $A_i'$ in $\F_2[x]$ for each $i \in \{1,2\}$, implying that $1+x^2 + A_1'+A_1'x+A_2'+A_2'x+A_1'A_2' \in S$. Additionally, $k_1 +k_2 - 2 \geq 2$, so $x^{k_1+k_2} \in S$ as well, meaning $\frac{a_1a_2}{x^2} \in S$. Since $(c_a)$ is minimal in $\mathsf{Repl}_{x^2}(S)$, we know $(c_a) = e_{a_1} + e_{a_2}$. This shows that $M_m^{(1)}(S) = \emptyset$. By Corollary \ref{Cor:Reduction}, we have $L(s\cdot x^2) = L(s) + 1$ for all $s \in S$.
	\end{example}
	
	For the remainder of this section, we shift our focus to numerical semigroups. Since numerical semigroups are reduced, we again drop the square brackets that indicate an equivalence class of associate elements. We can reduce the set of potential exceptions to $L(s+m) = L(s+1)$ and $\ell(s+m) = \ell(s) + 1$ further in the case of numerical semigroups. However, first, we show that there is only one atom $m$ for which these formulas have the potential to hold for all elements of the semigroup. 
	
	\begin{proposition}
		Let $S \coloneqq \langle n_1, \dots, n_k \rangle$ be a numerical semigroup minimally generated by the elements $n_1, \dots, n_k \in \N$ with $n_1 < \cdots < n_k$. Then, for $m \in \mathcal{A}(S)$, we have
		\begin{enumerate}
			\item if $L(s+m) = L(s) + 1$ for all $s \in S$, then $m = n_1$, and
			\item if $\ell(s+m) = \ell(s) + 1$ for all $s \in S$, then $m = n_k$.
		\end{enumerate}
	\end{proposition}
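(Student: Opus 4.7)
The plan is to prove the contrapositive of each implication by exhibiting a single semigroup element where the putative length identity fails whenever the wrong atom is picked. For the first statement, fix $m \in \mathcal{A}(S)$ with $m \neq n_1$, so $m \geq n_2 > n_1$, and assume for contradiction that $L(s+m) = L(s)+1$ holds for every $s \in S$. Starting from $L(0) = 0$ and iterating the identity, a straightforward induction on $a \in \N$ yields
\[
L(am) = a \quad \text{for all } a \in \N.
\]

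The key trick is to exploit the commutativity of integer multiplication: the semigroup element $n_1 m = m n_1 \in S$ carries two very different factorizations. Applying the inductive conclusion with $a = n_1$ gives $L(n_1 m) = n_1$, so every factorization of $n_1 m$ uses at most $n_1$ atoms. On the other hand, writing $n_1 m$ as a sum of $m$ copies of the atom $n_1$ is an explicit factorization of length $m$, so $L(n_1 m) \geq m$. Combining gives $m \leq n_1$, contradicting $m > n_1$.

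The second statement is entirely parallel, with $L$ replaced by $\ell$ and the roles of $n_1$ and $n_k$ swapped. Supposing $m \neq n_k$ and $\ell(s+m) = \ell(s)+1$ for every $s \in S$, induction produces $\ell(am) = a$ for all $a$; taking $a = n_k$ yields $\ell(n_k m) = n_k$, while factoring $n_k m$ as $m$ copies of $n_k$ shows $\ell(n_k m) \leq m < n_k$, again a contradiction. There is no serious obstacle here; the only real observation is that the recursion assumed on factorization lengths is far too rigid to coexist with the fact that the integer $n_1 m$ (respectively $n_k m$) admits two genuinely different expressions as a sum of atoms of $S$.
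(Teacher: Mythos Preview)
Your proof is correct and uses essentially the same idea as the paper: both arguments hinge on the element $n_1 m$ and the observation that it factors as $m$ copies of $n_1$, giving $L(n_1 m) \geq m$. The only cosmetic difference is in how the contradiction is closed --- the paper bounds $L(n_1 m - m) < m-1$ directly by noting that any sum of $m-1$ atoms exceeds $n_1 m - m$, whereas you iterate the assumed identity from $L(0)=0$ to obtain $L(n_1 m) = n_1 < m$; both routes are equally short.
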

	
	\begin{proof}
		Let $m \in \mathcal{A}(S)$ and suppose that $L(s+m) = L(s) + 1$ for all $s \in S$. We know that $m = n_i$ for some $i$. Suppose that $i \neq 1$. We first note that $L(n_1m) \geq m$. However, the sum of $m-1$ atoms is at least $(m-1)n_1$ and we see that $(m-1)n_1 = mn_1 - n_1 > -m + n_1m$. This means that $-m + n_1m$ must be factored as the sum of strictly fewer than $m - 1$ atoms. Thus, $L(-m+n_1m) + 1 < m \leq L(n_1m)$. This shows that if $L(s+m) = L(s) + 1$ for all $s \in S$, then $m = n_1$. The proof of the statement about $\ell(s)$ is similar. 
	\end{proof}
	
	Suppose we have $S \coloneqq \langle n_1, \dots, n_k \rangle$, a numerical semigroup minimally generated by $n_1, \dots, n_k \in \N$ with $n_1 < \cdots < n_k$. For $m = n_1$ or $m = n_k$, one relevant property an element $(c_{n_i})$ of $\mathsf{MinRepl}_m(S)$ can have is the property of being \textbf{left-zero}. We say that $(c_{n_i})$ is left-zero, if there exists some $j > 1$ such that $c_{n_i} = 0$ for all $i$ such that $2 \leq i < j$ and $c_{n_i} > 0$ for all $i \geq j$. Symmetrically, we say that $(c_{n_i})$ is \textbf{right-zero} if there exists some $j < k$ such that $c_{n_i} = 0$ for all $i$ such that $k-1 \geq i > j$ and $c_{n_i} > 0$ for all $i \leq j$.

	\begin{theorem}\label{Thm:NumericalPotentialExceptions}
		Let $S \coloneqq \langle n_1, \dots, n_k \rangle$ be a numerical semigroup minimally generated by $n_1, \dots, n_k \in \N$ with $n_1 < \cdots < n_k$. Set
		\[
			N_1(S) \coloneqq \{s \in M_{n_1}^{(1)}(S) \mid \exists\, (c_{n_i}) \in \mathsf{MinRepl}_{n_1}(S) \text{ such that } \varphi_S(c_{n_i}) = s, (c_{n_i}) \text{ is not left-zero}\}
		\]
		and
		\[
			N_2(S) \coloneqq \{s \in M_{n_k}^{(2)}(S) \mid \exists\, (c_{n_i}) \in \mathsf{MinRepl}_{n_k}(S) \text{ such that } \varphi_S(c_{n_i}) = s, (c_{n_i}) \text{ is not right-zero}\}
		\]
		Then,
		\begin{enumerate}
			\item we have $L(s+n_1) = L(s) + 1$ for all $s \in S$ if and only if $L(s) = L(s-n_1) + 1$ for all $s \in N_1(S)$, and
			\item we have $\ell(s+n_k) = \ell(s) + 1$ for all $s \in S$ if and only if $\ell(s) = \ell(s-n_k) + 1$ for all $s \in N_2(S)$.
		\end{enumerate}
	\end{theorem}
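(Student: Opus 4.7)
The plan is to combine Corollary \ref{Cor:Reduction}, Lemma \ref{Lem:Noms}, and a size comparison to show that the further restriction from $M_{n_1}^{(1)}(S)$ to $N_1(S)$ (and $M_{n_k}^{(2)}(S)$ to $N_2(S)$) loses no information. The forward implications are immediate since $N_1(S) \subseteq M_{n_1}^{(1)}(S)$ and $N_2(S) \subseteq M_{n_k}^{(2)}(S)$. For the converse of (1), by Corollary \ref{Cor:Reduction} it suffices to verify $L(s) = L(s-n_1)+1$ for every $s \in M_{n_1}^{(1)}(S) \setminus N_1(S)$, where by definition every $(c_{n_i}) \in \mathsf{MinRepl}_{n_1}(S)$ with $\varphi_S(c_{n_i}) = s$ is left-zero; by Lemma \ref{Lem:Noms} this amounts to exhibiting a maximum-length factorization of $s$ containing $n_1$.

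Suppose for contradiction none exists, and fix a maximum-length factorization $(d_{n_i})$ of $s$ with $d_{n_1} = 0$. Its restriction $(d_{n_2}, \ldots, d_{n_k})$ lies in $\mathsf{Repl}_{n_1}(S)$ and so dominates some element of $\mathsf{MinRepl}_{n_1}(S)$ whose image divides $s$. Since $s$ is divisibility-minimal in the image of $\mathsf{MinRepl}_{n_1}(S)$ by definition of $M_{n_1}^{(1)}(S)$, that image must equal $s$, which forces $(d_{n_2}, \ldots, d_{n_k})$ itself to be a minimal replacement of $s$ (the coordinate-wise nonnegative difference has $\varphi_S$-image zero, so must vanish). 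The hypothesis then makes $(d_{n_2}, \ldots, d_{n_k})$ left-zero, giving some $j' > 1$ with $d_{n_i} = 0$ for $2 \leq i < j'$ and $d_{n_i} > 0$ for $j' \leq i \leq k$. Minimality in $\mathsf{Repl}_{n_1}(S)$ then gives $s - n_1 - n_i \notin S$ for each such $i$, so no factorization of $s - n_1$ can use any $n_i$ with $i \geq j'$, forcing $s - n_1 \in \langle n_1, \ldots, n_{j'-1}\rangle$ and $L(s-n_1) \geq (s-n_1)/n_{j'-1}$. Combining this with $L(s) \leq s/n_{j'}$ (since $(d_{n_i})$ uses only atoms $\geq n_{j'}$) and the assumed $L(s) > L(s-n_1)+1$ yields $s(n_{j'-1} - n_{j'}) > n_{j'}(n_{j'-1} - n_1)$, impossible since the left side is negative while the right side is nonnegative.

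Statement (2) is handled by the dual argument for $m = n_k$ and $\ell$: take $s \in M_{n_k}^{(2)}(S) \setminus N_2(S)$, assume no minimum-length factorization of $s$ contains $n_k$, run the same divisibility upgrade to extract a right-zero minimal replacement with positive coordinates on $n_1, \ldots, n_j$, conclude every factorization of $s - n_k$ lies in $\langle n_{j+1}, \ldots, n_k \rangle$, and combine $\ell(s) \geq s/n_j$ and $\ell(s - n_k) \leq (s - n_k)/n_{j+1}$ against the assumed $\ell(s) \leq \ell(s - n_k)$ to obtain $s(n_{j+1} - n_j) \leq -n_j n_k$, again impossible. The main obstacle is the divisibility upgrade in the middle: without the minimality of $s$ in the image of $\mathsf{MinRepl}$, one would only know that the witness factorization dominates a minimal replacement of some proper divisor of $s$, and the left-zero (or right-zero) hypothesis, which is a property of minimal replacements mapping to $s$ itself, would fail to apply. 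Verifying this upgrade rigorously and then correctly identifying the threshold index ($j'$ or $j$) to set up the size comparison is where the argument needs the most care.
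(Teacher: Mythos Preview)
Your proof is correct and follows essentially the same route as the paper: reduce via Corollary~\ref{Cor:Reduction} to $s \in M_{n_1}^{(1)}(S)\setminus N_1(S)$, argue that any factorization of $s$ avoiding $n_1$ must itself lie in $\mathsf{MinRepl}_{n_1}(S)$ (your ``divisibility upgrade,'' which the paper uses implicitly), and then run a size comparison using the left-zero index. The only cosmetic difference is that the paper compares a factorization $(b_{n_i})$ with $b_{n_1}>0$ directly against a left-zero $(c_{n_i})$ via the single pivot $n_j$ to obtain $|(c_{n_i})|<|(b_{n_i})|$, whereas you bound $L(s)$ and $L(s-n_1)$ separately using $n_{j'}$ and $n_{j'-1}$ and then contradict $L(s)>L(s-n_1)+1$; both arrive at the same conclusion.
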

	\begin{proof}
		Corollary \ref{Cor:Reduction} implies almost all of this result except the left-zero and right-zero conditions. 
		
		Let $s \in M_{n_1}^{(1)}(S) \setminus N_1(S)$. This means that for all $(c_{n_i}) \in \mathsf{MinRepl}_{n_1}(S)$ such that $\varphi_S(c_{n_i}) = s$, we have that $(c_{n_i})$ is left-zero. Now let $(c_{n_i}) \in \mathsf{MinRepl}_{n_1}(S)$ and let $(b_{n_i}) \in \mathsf{Z}(s)$ such that $b_{n_1} > 0$. Because $(c_{n_i})$ is left-zero, there exists $j > 1$ such that $c_{n_i} > 0$ for all $i \geq j$ and $c_{n_i} = 0$ for all $i < j$. Let $(b_{n_i}) \in \mathsf{Z}(s)$ such that $b_{n_1} > 0$. We must have $b_i = 0$ for all $i \geq j$ since otherwise $(c_{n_i}) \notin \mathsf{MinRepl}_m(S)$. Now since
		\[
			s = \sum_{i=j}^{k} c_{n_i} n_i = \sum_{i=1}^{j-1} b_{n_i} n_i,
		\]
		we can calculate that
		\[
			\abs{(c_{n_i})} n_j \leq \sum_{i=j}^{k} c_{n_i} n_i = \sum_{i=1}^{j-1} b_{n_i} n_i < \abs{(b_{n_i})} n_j,
		\]
		which implies that $\abs{(c_{n_i})} < \abs{(b_{n_i})}$. This means any factorization of $s$ of the longest length must have at least one $n_1$. By Lemma \ref{Lem:Noms}, we have that $L(s) = L(s - n_1) + 1$. This shows that it suffices to check whether $L(s) = L(s-n_1) + 1$ for all $s \in N_1(S)$ in order to show that $L(s+n_1) = L(s) + 1$ for all $s \in S$. 	The proof of the second statement is similar. 
	\end{proof}

	\begin{remark}
		If we considered a numerical semigroup minimally generated by three elements $n_1, n_2, n_3 \in \N$ with $n_1 < n_2 < n_3$, Theorem \ref{Thm:NumericalPotentialExceptions} specializes to Theorem \ref{Thm:LiuYap}, which is Theorem 3.3 of \cite{LiuYap}, which says that there is only one potential exception to check when analyzing $L(s+n_1) = L(s) + 1$ or $\ell(s+n_3) = \ell(s) + 1$. In that case, elements of $\mathsf{MinRepl}_{m}(S)$, where $m \in \{n_1, n_3\}$, have two coordinates. According the previous theorem, we only have to consider the elements where, depending on whether $m$ is the smallest or largest generator, the first or the second coordinate is $0$. By minimality, there is only one potential exception to check. 
	\end{remark}
	
	We end this section by giving an example using a numerical semigroup. We order the coordinates of elements of $\mathsf{MinRepl}_m(S)$ in ascending order according to their indices.
	
	\begin{example}
		Let $S \coloneqq \langle 10, 12, 21, 38 \rangle$. We compute that
		\[
			\mathsf{MinRepl}_{10}(S) = \{(4,0,0), (0,2,0), (0,0,2), (1,0,1) \}.
		\]
		Evaluating these elements using $\varphi_S$ gives us $48, 42, 72, 50$. Note that the only element of length greater than $2$ is $(4,0,0)$. Since $48$ is not divisible by $42, 72$, or $50$, we have that $M_{10}^{(1)}(S) = N_1(S) = \{48\}$. We compute that $L(48) = 4$, realized only by $48 = 4\cdot 12$, but $L(48-10) + 1 = 2$. Thus, it is not the case that $L(s + 10) = L(s) + 1$ for all $s \in S$. 
		
		For the shortest factorization length, we calculate that
		\[
			\mathsf{MinRepl}_{38}(S) = \{(5,0,0), (0,4,0), (0,0,4), (4,3,0), (2,0,2), (0,3,2), (1,2,2) \}.
		\]
		The evaluations are $50, 48, 84, 76, 62, 78, 76$. We find that the only divisibility relations between distinct elements are $48| 84$, $48|78$, and $50|62|84$. Thus, $M_{38}^{(2)}(S) = \{50, 48, 76\}$. We know that $50$ only corresponds to $(5,0,0)$ in $\mathsf{MinRepl}_{38}(S)$, and $(5, 0, 0)$ is right-zero. The element $76$ corresponds to both $(4,3,0)$ and $(1,2,2)$ in $\mathsf{MinRepl}_{38}(S)$, both of which are right-zero, so $N_2(S) = \{48\}$. We find that $\ell(48) = 2$, realized by $48 = 10 + 38$, and $\ell(38) + 1 = 2$. This results in $\ell(48) = \ell(48-10) + 1$, which means we have that $\ell(s + 38) = \ell(s) + 1$ for all $s \in S$ according to Theorem \ref{Thm:NumericalPotentialExceptions}. 
	\end{example}

\section{Kunz preordered sets and Kunz polytopes}\label{Sect:Kunz}

\indent\indent Each numerical semigroup is associated to a Kunz poset. Numerical semigroups with the same Kunz poset have similar behaviors. Numerical semigroups with the same Kunz poset also correspond to integer points of the interior of the same face of a Kunz polytope. Since this way of grouping numerical semigroups together in families has been effective, we want to generalize the notions of Kunz posets and Kunz polytopes in the setting of commutative, cancellative semigroups. This will allow us to analyze the longest and shortest factorization lengths of elements of semigroups coming from the same family characterized by this generalized notion of Kunz posets. Not requiring the semigroups to be reduced will actually yield a preordered set instead of a poset.

\begin{definition}
Let $S$ be a commutative, cancellative semigroup. Take $m \in S$ such that $m \neq 0$. We define the \textbf{Apéry set} of $S$ with respect to $m$ as
\[
	\Ap(S;m) \coloneqq \{s \in S \mid s - m \notin S \}.
\]
Let $\pi: \mathsf{gp}(S) \to \mathsf{gp}(S)/\langle m \rangle$ denote the canonical projection. We then define \textbf{Kunz preordered set} of $S$ with respect to $m$ to be the preordered set
\[
	\mathsf{Kunz}(S;m) \coloneqq (\pi(\Ap(S;m)), \preceq),
\]
where for $s, t \in \Ap(S;m)$, we have $\pi(s) \preceq \pi(t)$ if and only if $t - s \in S$. Note that each element of $\pi(\Ap(S;m))$ has a unique preimage under $\pi$. If for some $s, t \in \Ap(S;m)$ we have $\pi(s) = \pi(t)$, then $s - t = cm$ for some $c \in \Z$. Without loss of generality, we can assume that $c \in \N$. Then $s = t + cm \in \Ap(S;m)$, meaning $c$ must be $0$ and therefore $s = t$. 
\end{definition}

\begin{remark}
	In general, $\preceq$ is a preorder on $\pi(\Ap(S;m))$. If $S$ is reduced, then $\preceq$ is a partial order. 
\end{remark}

For a numerical semigroup $S$ and a nonzero element $m \in S$, we have that $\pi(\Ap(S;m)) = \mathsf{gp}(S)/\langle m \rangle$, but for a general commutative, cancellative semigroup, $\pi(\Ap(S;m))$ might not be all of $\mathsf{gp}(S)/\langle m \rangle$. Notice also that for every $s$ in a numerical semigroup $S$, there is a maximal $p \in \N$ such that $s - pm \in S$. For a general commutative, cancellative semigroup $S$, we say that $s \in S$ is \textbf{infinitely divisible} by $m$ if $s - pm \in S$ for all $p \in \N$. If $m$ is not a  unit for $S$ and $L(s)$ is finite for all $s \in S$, then there does not exist an $s \in S$ such that $s$ is infinitely divisible by $m$. This yields $\pi(S) = \pi(\Ap(S;m))$, allowing the Apéry set to fully capture every residue class modulo $m$. This also forces $\pi(\Ap(S;m))$ to be a subsemigroup of $\mathsf{gp}(S)/\langle m \rangle$. 

\begin{lemma}
	Let $S$ be a commutative, cancellative semigroup and $m \in S$ be a nonzero element such that there does not exist $s \in S$ that is infinitely divisible by $m$. Then $\pi(\Ap(S;m))$ is a subsemigroup of $\mathsf{gp}(S)/\langle m \rangle$. 
\end{lemma}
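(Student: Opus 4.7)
The plan is to verify closure under the group operation of $\mathsf{gp}(S)/\langle m \rangle$ and, for completeness, existence of the identity. For closure, I would take arbitrary elements $\pi(s), \pi(t) \in \pi(\Ap(S;m))$ with representatives $s, t \in \Ap(S;m)$ and aim to exhibit some $u \in \Ap(S;m)$ such that $\pi(u) = \pi(s) + \pi(t) = \pi(s+t)$ in $\mathsf{gp}(S)/\langle m \rangle$. Since $s + t \in S$, the set $P \coloneqq \{p \in \N \mid s+t - pm \in S\}$ is nonempty, as $0 \in P$. By the hypothesis that no element of $S$ is infinitely divisible by $m$, the set $P$ is bounded above, so it admits a maximum $p^\star$.

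Setting $u \coloneqq s + t - p^\star m$, I would then check that $u \in \Ap(S;m)$: we have $u \in S$ by the definition of $P$, while $u - m = s + t - (p^\star + 1)m \notin S$ by maximality of $p^\star$. Since $\pi(m) = 0$ in $\mathsf{gp}(S)/\langle m \rangle$, it follows that $\pi(u) = \pi(s+t) = \pi(s) + \pi(t)$, establishing that $\pi(\Ap(S;m))$ is closed under the group operation inherited from $\mathsf{gp}(S)/\langle m \rangle$.

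For the identity element, I would argue that $0 \in \Ap(S;m)$. If instead $-m \in S$, then $m$ would be a unit of $S$, and for any $s \in S$ (such as $s = m$), one would have $s - pm = s + p(-m) \in S$ for every $p \in \N$, contradicting the hypothesis that no element of $S$ is infinitely divisible by $m$. Hence $-m \notin S$, so $0 \in \Ap(S;m)$ and $\pi(0)$ serves as the identity of $\pi(\Ap(S;m))$.

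I do not expect a substantial obstacle, as the statement is essentially an unpacking of the hypothesis. The only point that requires any care is the argument that $P$ is bounded above, which is a direct translation of the noninfinite-divisibility assumption; everything else is a routine check that the candidate representative $u$ lies in the Apéry set and projects to the correct residue class.
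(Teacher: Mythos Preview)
Your proof is correct and follows essentially the same approach as the paper: pick representatives $s,t\in\Ap(S;m)$, use the no-infinite-divisibility hypothesis to find the maximal $p^\star$ with $s+t-p^\star m\in S$, and observe that this element lies in the Ap\'ery set and projects to $\pi(s)+\pi(t)$. Your version is slightly more explicit about why the maximum exists and additionally verifies that $0\in\Ap(S;m)$, which the paper leaves implicit.
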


\begin{proof}
	Let $\alpha, \beta \in \pi(\Ap(S;m))$. We know that there are unique $s, t \in \Ap(S;m)$ such that $\pi(s) = \alpha$ and $\pi(t) = \beta$. By assumption, there is some maximal $n \in \N$ such that $s + t - nm \in S$. Observe that $\pi(s+t-nm) = \alpha + \beta$. Furthermore, $s+t-nm \in \Ap(S;m)$. This means that $\alpha + \beta \in \pi(\Ap(S;m))$. Thus, $\pi(\Ap(S;m))$ is a subsemigroup of $\mathsf{gp}(S)/\langle m \rangle$. 
\end{proof}

Now suppose we have a preordered set $\mathcal{P}$ that is a Kunz preordered set of some commutative, cancellative semigroup with respect to some nonzero element $m$. Suppose we also know what the group of differences $G$ of this semigroup. We want to know which semigroups give rise to this Kunz preordered set. Take the canonical projection $\pi: G \to G/\langle m \rangle$ and choose $r_\alpha \in G$ such that $\pi(r_\alpha) = \alpha$ for each $\alpha \in G/\langle m \rangle$. Suppose that $\mathcal{P} = \mathsf{Kunz}(S;m)$ for some commutative, cancellative semigroup $S$. Take an element $\alpha$ in $\mathcal{P}$. This element has the form $\pi(s)$ for some $s \in \Ap(S;m)$. Since $\pi(s) = \alpha$, we have that $s = x_\alpha m + r_\alpha$ for some unique $x_\alpha \in \Z$. However, not every choice of $x_\alpha \in \Z$ for all $\alpha \in \mathcal{P}$ can recover the Apéry set of a semigroup because the elements of the form $x_\alpha m + r_\alpha$ have to be in the Apéry set. Let $\alpha, \beta \in \mathcal{P}$. Then $(x_\alpha m + r_\alpha) + (x_\beta m + r_\beta) = (x_\alpha + x_\beta + c_{\alpha, \beta})m + r_{\alpha+\beta} \in S$, where $d_{\alpha, \beta} \in \Z$ is the unique integer such that $r_\alpha + r_\beta = d_{\alpha, \beta}m + r_{\alpha + \beta}$. We also know that $x_{\alpha+\beta} \in \Z$ is such that $x_{\alpha+\beta}m + r_{\alpha+\beta} \in \Ap(S;m)$. This forces $x_\alpha + x_\beta + d_{\alpha, \beta} \geq x_{\alpha + \beta}$ since otherwise $x_{\alpha + \beta}m + r_{\alpha + \beta} \notin \Ap(S;m)$. This leads to the following generalization of a Kunz polytope. 

For the remainder of the section we let $(G, +)$ be an abelian group and $m \in G$ be a nonzero element. Also let $\pi: G \to G/\langle m \rangle$ be the canonical projection. Take $A$ to be some subsemigroup of $G/\langle m \rangle$. For each $\alpha \in A$, we fix $r_\alpha \in G$ such that $\pi(r_\alpha) = \alpha$. Additionally, for each $\alpha, \beta \in A$, we define $d_{\alpha, \beta} \in \Z$ to be the unique integer such that $r_\alpha + r_\beta = d_{\alpha, \beta}m + r_{\alpha + \beta}$ and more generally, for each $(c_\alpha) \in \N^A$, we define $d_{(c_\alpha)} \in \Z$ to be the unique integer such that
\[
	\sum_{\alpha \in A} c_\alpha r_\alpha = d_{(c_\alpha)}m + r_\beta, 
\]
where $\beta = \sum_{\alpha \in A} c_\alpha \alpha$. 

\begin{definition}
	We define
	\[
		P_{G, m, A, \{r_\alpha\}} \coloneqq \left\{ (x_\alpha) \in \prod_{\alpha \in A} \R \,\middle\vert\, x_\alpha + x_\beta + d_{\alpha, \beta} \geq x_{\alpha+\beta} \text{ for all } \alpha, \beta \in A   \right\},
	\] 
	to be the \textbf{Kunz polytope} with respect to $G$, $m$, $A$, and $\{r_\alpha\}$. We say that a Kunz polytope $P_{G, m, A, \{r_\alpha\}}$ is \textbf{admissible} if there is some integer point $(x_\alpha) \in P_{G, m, A, \{r_\alpha\}}$ such that $S \coloneqq \langle \{m\} \cup \{x_\alpha m + r_\alpha \mid \alpha \in A\} \rangle$ is a semigroup with the property that $G = \mathsf{gp}(S)$, $A = \pi(\Ap(S;m))$, and there does not exist $s \in S$ infinitely divisible by $m$. 
\end{definition}

Even though the inequalities that define the Kunz polytope only involve the sums of two elements of $A$, these inequalities imply inequalities that involve the sum of any finite number of elements of $A$. 

\begin{lemma}\label{Lem:IteratedInequality}
	For each $(c_\alpha) \in \N^A$, we have
	\[
	d_{(c_\alpha)} + \sum_{\alpha \in A} c_\alpha x_\alpha \geq x_\beta,
	\]
	where $\beta = \sum_{\alpha \in A} c_\alpha \alpha$, for each $(x_\alpha) \in P_{G, m, A, \{r_\alpha\}}$. Furthermore, $d_{(c_\alpha)} + d_{(c_\alpha')} + d_{\beta, \beta'} = d_{(c_\alpha + c_\alpha')} $ for each $(c_\alpha), (c_\alpha') \in \N^A$, where $\beta = \sum_{\alpha \in A} c_\alpha \alpha$ and $\beta' = \sum_{\alpha \in A} c_\alpha' \alpha$.
\end{lemma}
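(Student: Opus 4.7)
I plan to prove the two statements in reverse order, since the additivity identity for $d_{(c_\alpha)}$ is purely algebraic and will be used in the induction that proves the main inequality.

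First I would establish the identity $d_{(c_\alpha)} + d_{(c_\alpha')} + d_{\beta, \beta'} = d_{(c_\alpha + c_\alpha')}$. This is just a matter of computing $\sum_{\alpha \in A}(c_\alpha + c_\alpha')r_\alpha$ in two ways. On the one hand, splitting the sum and applying the definitions of $d_{(c_\alpha)}$ and $d_{(c_\alpha')}$ yields $d_{(c_\alpha)} m + r_\beta + d_{(c_\alpha')} m + r_{\beta'}$, and then using the definition of $d_{\beta, \beta'}$ this equals $\bigl(d_{(c_\alpha)} + d_{(c_\alpha')} + d_{\beta, \beta'}\bigr) m + r_{\beta + \beta'}$. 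On the other hand, directly applying the definition gives $d_{(c_\alpha + c_\alpha')} m + r_{\beta+\beta'}$. The claimed identity then follows from the uniqueness built into the definition of $d_{(c_\alpha)}$.

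Next I would prove the inequality $d_{(c_\alpha)} + \sum_{\alpha \in A} c_\alpha x_\alpha \geq x_\beta$ by induction on $n \coloneqq \sum_{\alpha \in A} c_\alpha$. For the base case $n = 1$, we have $(c_\alpha) = e_\gamma$ for some $\gamma \in A$, which forces $\beta = \gamma$ and $d_{(c_\alpha)} = 0$ (since $r_\gamma = 0 \cdot m + r_\gamma$), so the inequality reduces to $x_\gamma \geq x_\gamma$. For the inductive step, assume the inequality holds whenever the length of the coefficient vector is $n$, and take $(c_\alpha')$ with $\sum_\alpha c_\alpha' = n+1$. Pick some $\gamma \in A$ with $c_\gamma' > 0$, let $(c_\alpha) \coloneqq (c_\alpha') - e_\gamma$, and set $\beta \coloneqq \sum_\alpha c_\alpha \alpha$ and $\beta' \coloneqq \sum_\alpha c_\alpha' \alpha = \beta + \gamma$. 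The inductive hypothesis gives $d_{(c_\alpha)} + \sum_\alpha c_\alpha x_\alpha \geq x_\beta$; adding $x_\gamma$ to both sides and then applying the defining inequality $x_\beta + x_\gamma + d_{\beta, \gamma} \geq x_{\beta+\gamma} = x_{\beta'}$ of the Kunz polytope produces
\[
d_{(c_\alpha)} + d_{\beta,\gamma} + \sum_{\alpha \in A} c_\alpha' x_\alpha \geq x_{\beta'}.
\]
By the additivity identity already established (applied with $(c_\alpha') = e_\gamma$, so that $d_{e_\gamma} = 0$ and $\beta' = \gamma$), the coefficient on the left is exactly $d_{(c_\alpha')}$, completing the induction.

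There is no real obstacle here; the only thing to be slightly careful about is the base case and the handling of trivial cases like $(c_\alpha) = 0$, and the bookkeeping between $d_{(c_\alpha)}$, $d_{\beta,\gamma}$, and $d_{(c_\alpha + e_\gamma)}$ in the inductive step. Proving the additivity identity first makes this bookkeeping clean, since the inductive step then reduces to one application of a single polytope-defining inequality.
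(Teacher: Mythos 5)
Your proof is correct and takes essentially the same route as the paper: prove the additivity identity first by computing $\sum_\alpha (c_\alpha + c_\alpha') r_\alpha$ two ways and invoking the uniqueness of the $d$-values, then establish the inequality by induction on $\abs{(c_\alpha)}$, peeling off one $e_\gamma$ and applying a single polytope-defining inequality before using the additivity identity to consolidate $d_{(c_\alpha)-e_\gamma} + d_{\gamma,\beta-\gamma}$ into $d_{(c_\alpha)}$. The only cosmetic difference is that the paper peels from $(c_\alpha)$ down to $(c_\alpha)-e_\gamma$ while you build up from $(c_\alpha)$ to $(c_\alpha')=(c_\alpha)+e_\gamma$.
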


\begin{proof}
	We prove the second statement first. Let $(c_\alpha), (c_\alpha') \in \N^A$ and set $\beta \coloneqq \sum_{\alpha \in A} c_\alpha \alpha$ and $\beta' \coloneqq \sum_{\alpha \in A} c_\alpha' \alpha$. We know that
	\[
		\sum_{\alpha \in A} c_\alpha r_\alpha = d_{(c_\alpha)}m + r_\beta  \quad\text{and}\quad  \sum_{\alpha \in A} c_\alpha' r_\alpha = d_{(c_\alpha')}m + r_{\beta'}.
	\]
	Summing the two together yields
	\[
		\sum_{\alpha \in A} (c_\alpha + c_\alpha') r_\alpha = (d_{(c_\alpha)} + d_{(c_\alpha')})m + r_\beta + r_{\beta'}  = (d_{(c_\alpha)} + d_{(c_\alpha')} + d_{\beta, \beta'})m + r_{\beta + \beta'}.
	\]
	Thus, $d_{(c_\alpha + c_\alpha')} = d_{(c_\alpha)} + d_{(c_\alpha')} + d_{\beta, \beta'}$. This proves the second statement. 
	
	We now want to show the first statement. Take $(c_\alpha) \in \N^A$ and $(x_\alpha) \in P_{G, m, A, \{r_\alpha\}}$. If $\abs{(c_\alpha)} = 1$, then $d_{(c_\alpha)} = 0$ and the inequality holds. Suppose now that $\abs{(c_\alpha)} > 1$. Take $\gamma \in A$ such that $c_\gamma > 0$. Then by induction, we have
	\[
		d_{(c_\alpha) - e_\gamma} + (c_\gamma - 1)x_\gamma + \sum_{\alpha \in A \setminus \{\gamma\}} c_\alpha x_\alpha \geq x_{\beta - \gamma},
	\]
	where $\beta = \sum_{\alpha \in A} c_\alpha \alpha$. Since $(x_\alpha) \in P_{G, m, A, \{r_\alpha\}}$, we know that $x_\gamma + x_{\beta - \gamma} + d_{\gamma, \beta - \gamma} \geq x_\beta$ and therefore
	\[
		x_\beta \leq d_{(c_\alpha) - e_\gamma} + d_{\gamma, \beta - \gamma} + \sum_{\alpha \in A} c_\alpha x_\alpha =  d_{(c_\alpha) - e_\gamma} + d_{e_\gamma}+ d_{\gamma, \beta - \gamma} + \sum_{\alpha \in A} c_\alpha x_\alpha = d_{(c_\alpha)} + \sum_{\alpha \in A} c_\alpha x_\alpha 
	\]
	by the second statement of the lemma. 
\end{proof}

In the setting of numerical semigroups, we have $G = \Z$ and $A = \Z/m\Z$. We can also canonically use $r_{n+m\Z} = n$ for each $n \in \{0,1,2,\dots, m-1\}$. We keep the coordinate indexed by $0$ for convenience of calculations. This is different from the conventional Kunz polytope where the coordinate indexed by $0$ is omitted. Additionally, the addition of the coordinate indexed by $0$ gives extra conditions. In the setting of numerical semigroups, for each $n \in \{1, 2, \dots, m-1\}$, we have $d_{m e_{n+m\Z}} + m x_{n+m\Z} \geq x_0 = 0$ for each $(x_{\alpha}) \in P_{G, m, A, \{r_\alpha\}}$ by Lemma \ref{Lem:IteratedInequality}. Since $d_{m e_{n+m\Z}} = n < m$, we must have that $x_{n+m\Z} \geq 0$ for all $n$ for each integer point $(x_\alpha) \in P_{G, m, A, \{r_\alpha\}}$.

The following is a generalization of the correspondence between integer points of a Kunz polytope and numerical semigroups containing $m$, found in \cite[Theorem 2.6(a)]{Kunz3} and \cite{Kunz}. 

\begin{proposition}\label{Prop:PolytopeBijection}
	Suppose that $P_{G, m, A, \{r_\alpha\}}$ is admissible. Then there is a bijection 
	\[
		\rho: P_{G, m, A, \{r_\alpha\}} \cap \prod_{\alpha \in A} \Z \to \left\{ S \,\middle\vert\, \stackbox[l][c]{
			$S$ is a commutative, cancellative semigroup,\\
			$\mathsf{gp}(S) = G, m \in S, A = \pi(\Ap(S;m)),$ and\\
			there does not exist $s \in S$ infinitely divisible by $m$.
		}  \right\}.
	\]
	Furthermore, if $(x_\alpha)$ is an integer point of $P_{G, m, A, \{r_\alpha\}}$, then $\Ap(S;m) = \{ x_\alpha m + r_\alpha \mid \alpha \in A \}$. 
\end{proposition}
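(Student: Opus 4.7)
My plan is to define $\rho$ explicitly, build an explicit inverse, and let the Apéry set identification fall out of the same calculation that confirms the codomain conditions. For an integer point $(x_\alpha) \in P_{G,m,A,\{r_\alpha\}}$, I set $\rho((x_\alpha)) = S \coloneqq \langle \{m\} \cup \{x_\alpha m + r_\alpha : \alpha \in A\}\rangle$. Admissibility supplies one integer point whose associated semigroup has $\mathsf{gp}(S) = G$, and since any two integer points yield generating sets that differ only by multiples of $m \in S$, the subgroup of $G$ generated is unchanged, so $\mathsf{gp}(S) = G$ for every integer point. The remaining codomain conditions will come from the Apéry set formula.

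The core step is the Apéry set identification. Any $s \in S$ can be written as $s = cm + \sum_\alpha c_\alpha (x_\alpha m + r_\alpha)$ for some $c, c_\alpha \in \N$, and using the uniquely determined $d_{(c_\alpha)}$ this rearranges to $s = \bigl(c + \sum_\alpha c_\alpha x_\alpha + d_{(c_\alpha)}\bigr)m + r_\beta$, where $\beta = \sum_\alpha c_\alpha \alpha$. Lemma \ref{Lem:IteratedInequality} gives $\sum_\alpha c_\alpha x_\alpha + d_{(c_\alpha)} \geq x_\beta$, so the $m$-coefficient in this expression is at least $x_\beta$. Applying this to a hypothetical representation of $x_\alpha m + r_\alpha - m$ as an element of $S$ forces the contradictory chain $x_\alpha - 1 \geq x_\alpha$, so each $x_\alpha m + r_\alpha$ lies in $\Ap(S;m)$. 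Conversely, if $s \in \Ap(S;m)$ with $\pi(s) = \beta$, writing $s = (x_\beta + k)m + r_\beta$ with $k \geq 0$ and using $s - m \notin S$ forces $k = 0$, so $s = x_\beta m + r_\beta$. The same decomposition rules out infinitely divisible elements, since peeling off one more $m$ than the slack produces $x_\beta m + r_\beta - m \notin S$.

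For the inverse, given a semigroup $S$ meeting the stated conditions, I use the injectivity of $\pi$ on $\Ap(S;m)$ (observed in the paper immediately after the Kunz preordered set definition) to define $x_\alpha$ as the unique integer with $x_\alpha m + r_\alpha \in \Ap(S;m)$. The polytope inequalities follow from $(x_\alpha m + r_\alpha)+(x_\beta m + r_\beta) = (x_\alpha+x_\beta+d_{\alpha,\beta})m + r_{\alpha+\beta} \in S$: because no element is infinitely divisible, this sum decomposes as the Apéry representative $x_{\alpha+\beta}m + r_{\alpha+\beta}$ plus a nonnegative multiple of $m$, forcing $x_\alpha+x_\beta+d_{\alpha,\beta} \geq x_{\alpha+\beta}$. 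Mutual inverseness is routine: the Apéry set formula recovers $(x_\alpha)$ in one direction, and in the other every element of $S$ equals an Apéry element plus a nonnegative multiple of $m$, so the semigroup generated by $m$ and the Apéry set is exactly $S$. The main obstacle is the Apéry set identification itself, since that is the step that converts the polytope inequalities, through Lemma \ref{Lem:IteratedInequality}, into structural information about which elements of $S$ belong to $\Ap(S;m)$.
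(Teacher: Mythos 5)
Your proposal is correct and follows essentially the same route as the paper's proof: define $\rho$ by the same formula, establish $\Ap(S;m) = \{x_\alpha m + r_\alpha \mid \alpha \in A\}$ via Lemma \ref{Lem:IteratedInequality}, derive the remaining codomain conditions (including the absence of infinitely divisible elements) from that identification, and invert by reading $x_\alpha$ off the Apéry set while deducing the polytope inequalities from closure under addition. Your treatment of $\mathsf{gp}(S) = G$ is slightly more compressed than the paper's (you note directly that the generating sets at any two integer points differ only by multiples of $m$, so they generate the same group), but it amounts to the same argument.
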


\begin{proof}
	Since $P_{G,m,A,\{r_\alpha\}}$ is admissible, there exists some integer point $(y_\alpha) \in P_{G,m,A,\{r_\alpha\}}$ such that $T \coloneqq \langle \{m\} \cup \{y_\alpha m + r_\alpha \mid \alpha \in A\} \rangle$ has the property that $\mathsf{gp}(T) = G$, $A = \pi(\Ap(T;m))$, and there does not exist an element $t \in T$ that is infinitely divisible by $m$. Let $(x_\alpha) \in P_{G, m, A, \{r_\alpha\}}$ be an integer point. We define $\rho(x_\alpha)$ to be \[S \coloneqq \langle \{m\} \cup \{x_\alpha m + r_\alpha \mid \alpha \in A\} \rangle.\] We need to ensure that $G = \mathsf{gp}(S)$, $\pi(\Ap(S;m)) = A$, and that there does not exist $s \in S$ infinitely divisible by $m$. Let $\alpha \in A$. We know that $y_\alpha m + r_\alpha \in \mathsf{gp}(S)$ from the generators of $S$. Since $T = \langle \{m\} \cup \{y_\alpha m + r_\alpha \mid \alpha \in A\} \rangle$ and $\mathsf{gp}(T) = G$, we know that $\mathsf{gp}(S) = G$ as well. Now we want to show that $\Ap(S;m) = \{ x_\alpha m + r_\alpha \mid \alpha \in A \}$. Let $\beta \in A$ and $n \in \Z$ such that $nm +r_\beta \in S$. Then using some $b \in \N$ and $(c_\alpha) \in \N^A$, we can write
	\[
		n m + r_\beta = bm + \sum_{\alpha \in A} c_\alpha (x_\alpha m + r_\alpha) = \left( b + d_{(c_\alpha)} + \sum_{\alpha \in A} c_\alpha x_\alpha \right) m + r_\beta.
	\]
	This shows that $n = b + d_{(c_\alpha)} + \sum_{\alpha \in A} c_\alpha x_\alpha$. By Lemma \ref{Lem:IteratedInequality}, we have that $n \geq b + x_\beta \geq x_\beta$. Therefore, $x_\beta m + r_\beta \in \Ap(S;m)$. On the other hand, let $s \in \Ap(S;m)$. Since $s$ is a sum of elements whose projections under $\pi$ are in $A$, we have $\pi(s) \in A$. This forces $s = x_{\pi(s)} m + r_{\pi(s)}$. Therefore, we have shown that \[\Ap(S;m) = \{ x_\alpha m + r_\alpha \mid \alpha \in A \}.\] Consequently, we have that $\pi(\Ap(S;m)) = A$.	Next, we let $s \in S$. Again, we have $\pi(s) \in A$. Therefore, we can write $s = n m + r_{\pi(s)}$ for some $n \in \N$. We have seen that $n \geq x_{\pi(s)}$, so it is not possible that $s - pm \in S$ for all $p \in \N$. We have shown that $\rho$ is well-defined. 
	 
	 To show that $\rho$ is surjective, we take a commutative, cancellative semigroup $S$ such that $G = \mathsf{gp}(S)$, $m \in S$, $\pi(\Ap(S;m)) = A$, and there does not exist $s \in S$ infinitely divisible by $m$. For each $\alpha \in A$, we have a unique $s \in \Ap(S;m)$ such that $\pi(s) = \alpha$, meaning there is a unique $x_\alpha \in \Z$ such that $s = x_\alpha m + r_\alpha$. We want to show that $(x_\alpha)$ is an integer point in $P_{G, m, A, \{r_\alpha\}}$. Let $\alpha, \beta \in A$. Then 
	 \[
	 	(x_\alpha m + r_\alpha) + (x_\beta m + r_\beta) = (x_\alpha + x_\beta + d_{\alpha, \beta})m + r_{\alpha + \beta}
	 \]
	 means that for $(x_\alpha m + r_\alpha) + (x_\beta m + r_\beta) \in S$ to be true, we must have $x_\alpha + x_\beta + d_{\alpha, \beta} \geq x_{\alpha + \beta}$. This shows that $(x_\alpha)$ is an integer point in $P_{G, m, A, \{r_\alpha\}}$. We want to show that $S$ is the image of the integer point $(x_\alpha) \in P_{G, m, A, \{r_\alpha\}}$ under $\rho$. We know that $\langle \{m\} \cup \{x_\alpha m + r_\alpha \mid \alpha \in A\} \rangle \subseteq S$. To show the reverse inclusion, we take $s \in S$. We know that there exists some $p \in \N$ such that $s - pm \in S$ but $s - (p+1)m \notin S$. Thus, $\pi(s-pm) \in A$, which means that $s-pm \in \{x_\alpha m + r_\alpha \mid \alpha \in A\}$ and thus $s \in \langle \{m\} \cup \{x_\alpha m + r_\alpha \mid \alpha \in A\} \rangle$. This shows that the map $\rho$ is surjective. 

	In order to show that $\rho$ is injective, we take $(x_\alpha), (x_\alpha') \in P_{G, m, A, \{r_\alpha\}}$ and suppose that $\rho(x_\alpha) = \rho(x_\alpha')$. For each $\alpha \in A$, we have seen that $x_\alpha m + r_\alpha$ and $x_\alpha' m + r_\alpha$ are in $\Ap(\mathsf{Kunz}(\rho(x_\alpha); m))$. Since both $x_\alpha m + r_\alpha$ and $x_\alpha' m + r_\alpha$ project to $\alpha$ under $\pi$, we must have $x_\alpha m + r_\alpha = x_\alpha' m + r_\alpha$, which implies that $x_\alpha = x_\alpha'$. This shows that the correspondence is injective. 
\end{proof}

To access the preorder that is defined on a Kunz preordered set, we need to consider the faces of the Kunz polytope. Just like the case with numerical semigroups, integer points of the interior of a face of the Kunz polytope correspond to semigroups with the same Kunz preordered set. 

\begin{definition}
	A \textbf{face} $F$ of the Kunz polytope $P_{G, m, A, \{r_\alpha\}}$ is a subset of the form \[\{(x_\alpha) \in P_{G, m, A, \{r_\alpha\}} \mid x_\alpha + x_\beta + c_{\alpha, \beta} = x_{\alpha + \beta} \text{ for all } (\alpha, \beta) \in C \},\] where $C$ is some subset of $A \times A$. We define the \textbf{interior} of a face $F$ to be all the points not contained in a face that is properly contained in $F$. 
\end{definition}

We use the correspondence between integer points and semigroups established in Proposition \ref{Prop:PolytopeBijection} to show that semigroups corresponding to the integer points in the interior of the same face of the Kunz polytope have the same Kunz preordered set. 

\begin{proposition}\label{Prop:FaceCorrespondence}
	Suppose that the Kunz polytope $P_{G, m, A, \{r_\alpha\}}$ is admissible. Two integer points $(x_\alpha), (x_\alpha')$ of $P_{G, m, A, \{r_\alpha\}}$ lie on the interior of the same face $F$ of $P_{G, m, A, \{r_\alpha\}}$, if and only if
	\[
		\mathsf{Kunz}(\rho(x_\alpha); m) = \mathsf{Kunz}(\rho(x_\alpha'); m)
	\]
	as preordered sets. Furthermore, for any face $F$ of $P_{G, m, A, \{r_\alpha\}}$ whose interior contains an integer point $(x_\alpha)$ and for $\alpha, \beta \in A$, we have $\alpha \preceq \beta$ in $\mathsf{Kunz}(\rho(x_\alpha); m)$ if and only if $y_\alpha + y_{\beta-\alpha} + d_{\alpha, \beta- \alpha} = y_\beta$ for all $(y_\alpha) \in F$. 
\end{proposition}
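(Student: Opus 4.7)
The plan is to first prove the ``furthermore'' statement and then extract the main biconditional from it. The key observation is that the preorder on $\mathsf{Kunz}(\rho(x_\alpha); m)$ is encoded in exactly which of the defining inequalities of the Kunz polytope become equalities at $(x_\alpha)$.

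First I would establish the following claim: for any integer point $(x_\alpha) \in P_{G, m, A, \{r_\alpha\}}$ with $S = \rho(x_\alpha)$, and for any $\alpha, \beta \in A$, the relation $\alpha \preceq \beta$ in $\mathsf{Kunz}(S;m)$ holds if and only if $\beta - \alpha \in A$ and $x_\alpha + x_{\beta-\alpha} + d_{\alpha, \beta-\alpha} = x_\beta$. Proposition \ref{Prop:PolytopeBijection} gives $\Ap(S;m) = \{x_\gamma m + r_\gamma \mid \gamma \in A\}$, and since every element of $S$ can be reduced modulo $m$ into this Apéry set, $\pi(S) = A$; so $\alpha \preceq \beta$ already forces $\beta - \alpha \in A$. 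Using $r_\alpha + r_{\beta - \alpha} = d_{\alpha, \beta - \alpha} m + r_\beta$, a direct computation yields
\[
(x_\beta m + r_\beta) - (x_\alpha m + r_\alpha) = (x_\beta - x_\alpha - d_{\alpha, \beta - \alpha}) m + r_{\beta - \alpha}.
\]
This element lies in $S$ precisely when its coefficient of $m$ is at least $x_{\beta - \alpha}$, i.e.\ $x_\beta \geq x_\alpha + x_{\beta - \alpha} + d_{\alpha, \beta - \alpha}$. Combined with the defining polytope inequality, which supplies the reverse direction, this forces equality. Conversely, if equality holds, the displayed difference equals $x_{\beta - \alpha} m + r_{\beta - \alpha} \in \Ap(S;m) \subseteq S$, so $\alpha \preceq \beta$.

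Next I would invoke the standard polyhedral fact that for each point $p$ of $P_{G, m, A, \{r_\alpha\}}$, the minimal face containing $p$ in its interior is the face cut out by exactly those defining inequalities that are tight at $p$. Hence if $(x_\alpha)$ lies in the interior of a face $F$, then the equality $y_\alpha + y_{\beta - \alpha} + d_{\alpha, \beta - \alpha} = y_\beta$ holds on all of $F$ if and only if it holds at $(x_\alpha)$. Combined with the claim above, this yields the ``furthermore'' statement: $\alpha \preceq \beta$ in $\mathsf{Kunz}(\rho(x_\alpha); m)$ is exactly the condition that this equality be satisfied on all of $F$.

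Finally, two integer points $(x_\alpha), (x_\alpha')$ lie in the interior of the same face if and only if the same set of defining inequalities is tight at both points, which by the claim is equivalent to saying that the preorders $\preceq$ on $A$ arising from $\mathsf{Kunz}(\rho(x_\alpha); m)$ and $\mathsf{Kunz}(\rho(x_\alpha'); m)$ coincide; since both preordered sets have underlying set $A$, this is equality as preordered sets. The main obstacle is handling the ``interior of a face'' bookkeeping carefully, but this reduces to the standard polyhedral fact just mentioned; once the claim translating tight inequalities into the preorder relation is in place, the rest is a direct dictionary between polyhedral and order-theoretic language.
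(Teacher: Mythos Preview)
Your proposal is correct and follows essentially the same approach as the paper: both hinge on the computation
\[
(x_\beta m + r_\beta) - (x_\alpha m + r_\alpha) = (x_\beta - x_\alpha - d_{\alpha,\beta-\alpha})m + r_{\beta-\alpha}
\]
to show that $\alpha \preceq \beta$ is equivalent to tightness of the defining inequality indexed by $(\alpha,\beta-\alpha)$, and then translate ``same interior face'' into ``same set of tight inequalities.'' Your write-up is slightly more explicit than the paper's (you state both directions of the key claim and name the polyhedral fact about minimal faces, whereas the paper leaves these implicit), but the mathematical content is the same.
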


\begin{proof}
	Suppose that for $\alpha, \beta \in A$, we have $\alpha \preceq \beta$ in $\mathsf{Kunz}(\rho(x_\alpha); m)$. We know that $\Ap(\rho((x_\alpha));m) = \{ x_\alpha m + r_\alpha \mid \alpha \in A \}$ by Proposition \ref{Prop:PolytopeBijection}. Then $\alpha \preceq \beta$ implies that $(x_\beta m + r_\beta) - (x_\alpha m + r_\alpha) \in S$. Since $\pi((x_\beta m + r_\beta) - (x_\alpha m + r_\alpha) ) = \beta - \alpha$ and there is a maximal $p$ such that $(x_\beta m + r_\beta) - (x_\alpha m + r_\alpha)  -pm \in S$, we have that $\beta - \alpha \in A$. Due to the fact that $r_\alpha + r_{\beta-\alpha} = d_{\alpha, \beta-\alpha} m + r_\beta$, we get that $r_\beta - r_\alpha = -d_{\alpha, \beta-\alpha}m + r_{\beta-\alpha}$. Now we calculate that 
	\begin{align*}
			(x_\beta m + r_\beta) - (x_\alpha m + r_\alpha) &= (x_\beta - x_\alpha)m + (r_\beta - r_\alpha)\\
			& = (x_\beta - x_\alpha - d_{\alpha, \beta-\alpha})m + r_{\beta- \alpha}. 
	\end{align*}
	Since $(x_\alpha) \in P_{G, m, A, \{r_\alpha\}}$, we have that $x_\alpha + x_{\beta-\alpha} + d_{\alpha, \beta- \alpha} \geq x_\beta$. If this inequality is strict, then $x_\beta - x_\alpha - d_{\alpha, \beta-\alpha} < x_{\beta-\alpha}$, so $(x_\beta m + r_\beta) - (x_\alpha m + r_\alpha) \notin S$. Thus, $\alpha \preceq \beta$ implies that $x_\alpha + x_{\beta-\alpha} + d_{\alpha, \beta- \alpha} = x_\beta$. Therefore, $\mathsf{Kunz}(\rho(x_\alpha); m)$ and $\mathsf{Kunz}(\rho(x_\alpha'); m)$ have the same relations in their partial orders if and only if $(x_\alpha)$ and $(x_\alpha')$ lie on the interior of the same face of $P_{G,m,A,\{r_\alpha\}}$. 
	
	Let $F$ be a face of $P_{G, m, A, \{r_\alpha\}}$ whose interior contains an integer point $(x_\alpha)$. Also let $\alpha, \beta \in A$. We either have $y_\alpha + y_{\beta-\alpha} + d_{\alpha, \beta- \alpha} = y_\beta$ for all $(y_\alpha)$ in the interior of $F$ or $y_\alpha + y_{\beta-\alpha} + d_{\alpha, \beta- \alpha} > y_\beta$ for all $(y_\alpha)$ in the interior of $F$. We know that $\alpha \preceq \beta$ in $\mathsf{Kunz}(\rho(x_\alpha);m)$ if and only if $x_\alpha + x_{\beta-\alpha} + d_{\alpha, \beta- \alpha} = x_\beta$. This implies that $\alpha \preceq \beta$ in $\mathsf{Kunz}(\rho(x_\alpha); m)$ if and only if $y_\alpha + y_{\beta-\alpha} + d_{\alpha, \beta- \alpha} = y_\beta$ for all $(y_\alpha)$ in the interior of $F$, which is equivalent to saying that $y_\alpha + y_{\beta-\alpha} + d_{\alpha, \beta- \alpha} = y_\beta$ for all $(y_\alpha) \in F$.

\end{proof}

Now that we have seen that the integer points of the interior of a face of a Kunz polytope correspond to semigroups with the same Kunz preordered sets. We capture some of the behavior of the semigroups with the same Kunz preordered set by defining a semigroup operation on the Kunz preordered set. This definition is a generalization of the nilsemigroup defined in \cite[Theorem 3.3]{Kunz3}. Note for Kunz preordered sets of a general commutative, cancellative semigroup, we do not necessarily get an operation that makes every nonzero element nilpotent. 

\begin{definition}	
	Suppose that $\mathcal{P} \coloneqq \mathsf{Kunz}(S;m)$ is the Kunz preordered set for some commutative, cancellative semigroup $S$ and some nonzero $m \in S$ such that there does not exist an $s \in S$ infinitely divisible by $m$. We define a commutative semigroup operation $\oplus$ on the set $\mathcal{P}^\infty \coloneqq \mathcal{P} \cup \{\infty\}$. First, we have $\alpha \oplus \infty = \infty \oplus \alpha = \infty$ for all $\alpha \in \mathcal{P} \cup \{\infty\}$. Next, for each $\alpha, \beta \in \mathcal{P}$, we define
	\[
		\alpha \oplus \beta = \begin{cases}
			\alpha + \beta, & \text{if } \alpha \preceq \alpha + \beta \\
			\infty, &\text{otherwise.}
		\end{cases}
	\]
\end{definition}

We need to confirm that $\oplus$ is indeed a commutative semigroup operation on $\mathcal{P}^\infty$. We present the semigroup using the correspondence between integer points on the Kunz polytope and semigroups in Proposition \ref{Prop:PolytopeBijection} in order to utilize the translation of the partial order $\preceq$ into constraints on the Kunz polytope given by Proposition \ref{Prop:FaceCorrespondence}. 

\begin{lemma}
	Suppose that $P_{G, m, A, \{r_\alpha\}}$ is admissible. Let $(x_\alpha) \in P_{G, m, A, \{r_\alpha\}}$ be an integer point and set $\mathcal{P} \coloneqq \mathsf{Kunz}(\rho((x_\alpha));m)$. The operation $\oplus$ is a commutative semigroup operation on $\mathcal{P}^\infty$. Furthermore, the divisibility preordered set of $\mathcal{P}$ under $\oplus$ equals $(\mathcal{P}, \preceq)$ as preordered sets. 
\end{lemma}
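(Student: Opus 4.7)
The plan is to work with the representatives $R_\alpha \in \Ap(S;m)$ for $S \coloneqq \rho((x_\alpha))$ -- each $\alpha \in A$ admits a unique such representative projecting to $\alpha$ under $\pi$ -- and to translate the preorder into an equality of semigroup elements via Proposition \ref{Prop:FaceCorrespondence}: the condition $x_\alpha + x_\beta + d_{\alpha, \beta} = x_{\alpha+\beta}$ is equivalent to $R_\alpha + R_\beta = R_{\alpha+\beta}$ in $S$, which in turn is equivalent to $R_\alpha + R_\beta \in \Ap(S;m)$. Once this translation is in place, commutativity and the divisibility claim should reduce to one-line checks; the bulk of the work will go into associativity.

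For commutativity, I would simply note that the condition $R_\alpha + R_\beta \in \Ap(S;m)$ is symmetric in $\alpha$ and $\beta$, so $\alpha \oplus \beta = \beta \oplus \alpha$ whenever neither is $\infty$; the cases involving $\infty$ are built into the definition.

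The hard part will be associativity. After disposing of the cases where any input equals $\infty$ (immediate from the absorbing clause), I plan to establish the following characterization for $\alpha, \beta, \gamma \in \mathcal{P}$: $(\alpha \oplus \beta) \oplus \gamma \neq \infty$ if and only if $R_\alpha + R_\beta + R_\gamma \in \Ap(S;m)$, and in that case the product equals $\alpha + \beta + \gamma$. The forward direction follows by unrolling the definition twice. For the converse, I would argue by contradiction: if $R_\alpha + R_\beta + R_\gamma \in \Ap(S;m)$ but $R_\alpha + R_\beta \notin \Ap(S;m)$, then writing $R_\alpha + R_\beta = R_{\alpha+\beta} + pm$ for some $p \geq 1$ gives $R_\alpha + R_\beta + R_\gamma - m = R_{\alpha+\beta} + R_\gamma + (p-1)m \in S$, contradicting Apéry membership; the same argument rules out $R_{\alpha+\beta} + R_\gamma \notin \Ap(S;m)$. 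Because this criterion is symmetric in $\alpha, \beta, \gamma$, the same characterization also governs $\alpha \oplus (\beta \oplus \gamma)$, so both associations agree on when they are $\infty$ and on their common finite value.

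For the divisibility claim, first I would verify that $0 \in A$: the no-infinite-divisibility hypothesis forces $-m \notin S$ (otherwise $0$ would be infinitely divisible by $m$), so $0 \in \Ap(S;m)$, and by uniqueness $R_0 = 0$, making $0$ an identity for $\oplus$. The forward direction $\alpha \,|\, \beta \Rightarrow \alpha \preceq \beta$ then follows immediately from the definition of $\oplus$. For the converse, given $\alpha \preceq \beta$ in $\mathcal{P}$, the element $R_\beta - R_\alpha$ lies in $S$ and projects to $\beta - \alpha$; since the no-infinite-divisibility hypothesis yields $\pi(S) = \pi(\Ap(S;m)) = A$, we conclude $\beta - \alpha \in A = \mathcal{P}$, and taking $\gamma \coloneqq \beta - \alpha$ gives $\alpha \oplus \gamma = \alpha + \gamma = \beta$, witnessing $\alpha \,|\, \beta$.
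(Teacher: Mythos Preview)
Your proof is correct and follows essentially the same strategy as the paper: translate $\alpha \preceq \alpha+\beta$ into the symmetric condition $x_\alpha + x_\beta + d_{\alpha,\beta} = x_{\alpha+\beta}$ (equivalently, your $R_\alpha + R_\beta \in \Ap(S;m)$), then show the three-fold version is symmetric for associativity, and handle divisibility exactly as the paper does by producing $\beta-\alpha \in \mathcal{P}$. The only difference is that for associativity the paper combines the two equalities into the single symmetric condition $x_\alpha + x_\beta + x_\gamma + d_{e_\alpha+e_\beta+e_\gamma} = x_{\alpha+\beta+\gamma}$ via Lemma~\ref{Lem:IteratedInequality}, whereas you reach the equivalent criterion $R_\alpha + R_\beta + R_\gamma \in \Ap(S;m)$ by a short contradiction argument, which sidesteps that lemma.
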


\begin{proof}
	Let $\alpha, \beta, \gamma \in \mathcal{P}$. We want to show that $(\alpha \oplus \beta) \oplus \gamma = \alpha \oplus (\beta \oplus \gamma)$. 
	
	By Proposition \ref{Prop:FaceCorrespondence}, we have that $(\alpha \oplus \beta) \oplus \gamma = \alpha + \beta + \gamma$ if and only if $x_\alpha + x_\beta + d_{\alpha, \beta} = x_{\alpha+\beta}$ and $x_{\alpha + \beta} + x_\gamma + d_{\alpha+\beta, \gamma} = x_{\alpha+\beta+\gamma}$. This happens if and only if $x_{\alpha} + x_\beta + x_\gamma + d_{e_\alpha + e_\beta + e_\gamma} = x_{\alpha+ \beta + \gamma}$, which is invariant under permutations of $\{\alpha, \beta, \gamma\}$. Thus, we have $(\alpha \oplus \beta) \oplus \gamma = \alpha + \beta + \gamma$ if and only if  $\alpha \oplus (\beta \oplus \gamma) = \alpha + \beta + \gamma$. As a result, $(\alpha \oplus \beta) \oplus \gamma = \infty$ if and only if  $\alpha \oplus (\beta \oplus \gamma) = \infty$ as well.
	
	We have shown that $\oplus$ is associative. We also see $\alpha \preceq \alpha + \beta$ if and only if $x_{\alpha} + x_\beta + d_{\alpha, \beta} = x_{\alpha + \beta}$, which is invariant under permutations of $\{\alpha, \beta\}$. Thus, $\alpha \preceq \alpha + \beta$ is equivalent to $\beta \preceq \beta + \alpha$, meaning that $\oplus$ is commutative. 
	
	Lastly, we suppose that $\alpha, \beta \in \mathcal{P}$ such that $\alpha$ divides $\beta$ under $\oplus$. This means that there is some $\alpha' \in \mathcal{P}$ such that $\alpha \oplus \alpha' = \beta$, which means that $\alpha \preceq \beta$. Conversely, if $\alpha \preceq \beta$, then $(x_\beta m + r_\beta) - (x_\alpha m + r_\alpha) \in S$. Since the projection of this element under $\pi$ is $\beta - \alpha$, we have that $\beta - \alpha \in \mathcal{P}$. This shows that $\alpha \oplus (\beta - \alpha) = \beta$. Thus, the divisibility preordered set of $\mathcal{P}$ under $\oplus$ equals $(\mathcal{P}, \preceq)$ as preordered sets.
\end{proof}

Now that we have an operation on the Kunz preordered set that mimics the semigroup operation of a semigroup that has this particular Kunz preordered set, we can analyze the equations $L(s+m) = L(s) + 1$ and $\ell(s+m) = \ell(s) + 1$ using the operation on the Kunz preordered set. There is a face on the Kunz polytope such that every integer point in the interior of that face corresponds to a semigroup with the same Kunz preordered set according to Proposition \ref{Prop:FaceCorrespondence}.

\begin{lemma}\label{Lem:Translate}
	Suppose that $P_{G, m, A, \{r_\alpha\}}$ is admissible. Fix a face $F$ of $P_{G,m,A,\{r_\alpha\}}$. The interior of $F$ corresponds to some Kunz preordered set $\mathcal{P}$. Let $(x_\alpha)$ be an integer point in the interior of $F$. Set $S \coloneqq \rho(x_\alpha)$. We have bijections
	\begin{enumerate}
		\item $f: \mathcal{A}(\mathcal{P}^\infty_\textup{red}) \to \mathcal{A}(S_\textup{red}) \setminus \{[m]\}$ and
		\item $g: \min\limits_{\leq} \mathsf{Z}_{\mathcal{P}^\infty_\textup{red}}(\infty) \to \mathsf{MinRepl}_m(S)$. 
	\end{enumerate}
\end{lemma}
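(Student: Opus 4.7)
The plan is to use the translation between elements of $\mathcal{P}$ and their unique preimages in $\mathsf{Ap}(S;m)$. By Proposition \ref{Prop:PolytopeBijection}, each $\alpha \in \mathcal{P}$ has a unique lift $s_\alpha = x_\alpha m + r_\alpha \in \mathsf{Ap}(S;m)$, and by Proposition \ref{Prop:FaceCorrespondence}, the relation $\alpha \preceq \alpha+\beta$ in $\mathcal{P}$ is equivalent to $s_\alpha + s_\beta = s_{\alpha+\beta}$ in $S$. Thus $\alpha \oplus \beta \in \mathcal{P}$ precisely when $s_\alpha + s_\beta$ stays in $\mathsf{Ap}(S;m)$, and $\alpha \oplus \beta = \infty$ precisely when $s_\alpha + s_\beta \notin \mathsf{Ap}(S;m)$, i.e., when $s_\alpha + s_\beta - m \in S$. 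This will be the central dictionary.

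First I would construct $f$. I would verify that units of $\mathcal{P}^\infty$ correspond to units of $S$: if $u \in \mathcal{P}$ is a unit with inverse $u'$ then $s_u + s_{u'} = 0$, so $s_u$ is a unit of $S$; conversely, any unit of $S$ lies in $\mathsf{Ap}(S;m)$ (if $v$ is a unit and $v - m \in S$, then $m$ would be a unit, contradicting that $m$ is an atom), so its image in $\mathcal{P}$ is a unit of $\mathcal{P}^\infty$. With this, I define $f([\alpha]) = [s_\alpha]$ for $[\alpha] \in \mathcal{A}(\mathcal{P}^\infty_\textup{red})$. Well-definedness and injectivity on associate classes follow because $s_\alpha \sim s_{\alpha'}$ in $S$ iff $\alpha \sim \alpha'$ in $\mathcal{P}^\infty$ (translating the unit-difference via the dictionary). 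To see the image lies in $\mathcal{A}(S_\textup{red}) \setminus \{[m]\}$: any decomposition $s_\alpha = x + y$ with both $x, y$ non-units in $S$ forces neither to be associate to $m$ (else $s_\alpha - m \in S$, contradicting $s_\alpha \in \mathsf{Ap}(S;m)$), so $x, y \in \mathsf{Ap}(S;m)$ and projecting via $\pi$ produces a decomposition of $\alpha$ in $\mathcal{P}^\infty$ with both factors non-units; hence $\alpha$ atom implies $s_\alpha$ atom, and clearly $s_\alpha \not\sim m$. For surjectivity, given an atom $a \in \mathcal{A}(S_\textup{red}) \setminus \{[m]\}$, one checks $a \in \mathsf{Ap}(S;m)$ (since $a - m \in S$ together with $a$ being atom forces $a \sim m$); then $\pi(a) \in \mathcal{P}$ and the reverse implication (decomposition of $\pi(a)$ in $\mathcal{P}^\infty$ lifts to one of $a$ in $S$) gives that $\pi(a)$ is an atom.

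Next I would define $g$ by relabeling coordinates through $f$: for $(c_\alpha) \in \N^{\mathcal{A}(\mathcal{P}^\infty_\textup{red})}$, set $g(c_\alpha) = (c_a)$ with $c_{f([\alpha])} = c_\alpha$. The key claim is that $\bigoplus_\alpha c_\alpha \cdot \alpha = \infty$ in $\mathcal{P}^\infty_\textup{red}$ if and only if $-m + \varphi_S(c_a) \in S$. I would prove this by induction on $\sum c_\alpha$, applying the dictionary at each step: as long as all partial $\oplus$-sums stay in $\mathcal{P}$, the corresponding partial $S$-sums stay in $\mathsf{Ap}(S;m)$ (so $m$ cannot be subtracted); the first overflow to $\infty$ coincides exactly with the first partial $S$-sum $s_{\beta_j} + s_{\alpha_{j+1}}$ failing to lie in $\mathsf{Ap}(S;m)$, after which $m$ remains subtractable from every subsequent total. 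Thus $g$ restricts to a bijection $\mathsf{Z}_{\mathcal{P}^\infty_\textup{red}}(\infty) \to \mathsf{Repl}_m(S)$. Since $g$ is merely a relabeling of coordinates along the bijection $f$ of index sets, it preserves the coordinatewise partial order $\leq$, so it takes $\min_\leq \mathsf{Z}_{\mathcal{P}^\infty_\textup{red}}(\infty)$ onto $\min_\leq \mathsf{Repl}_m(S) = \mathsf{MinRepl}_m(S)$.

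The main obstacle I anticipate is bookkeeping around units and associates when verifying that $f$ is well-defined and injective on $S_\textup{red}$-classes, rather than any single deep step; in particular, showing that the units of $\mathcal{P}^\infty$ all sit inside $\mathcal{P}$ (as opposed to being absorbed into $\infty$) and match $U(S)$ bijectively is where the argument that $m \notin U(S)$ must be used repeatedly. The other moderately delicate point is the inductive overflow argument in Step 2, because one needs that the condition ``$\oplus$-sum equals $\infty$'' is genuinely a property of the multiset of atoms (independent of bracketing), which follows from associativity/commutativity of $\oplus$ but deserves to be invoked explicitly.
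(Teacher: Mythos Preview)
Your proposal is correct and follows essentially the same route as the paper. The paper defines $f([\alpha]) = [x_\alpha m + r_\alpha]$ and $g$ by relabeling through $f$, then verifies the required properties via explicit Kunz-polytope coordinate computations (chiefly Lemma~\ref{Lem:IteratedInequality} and Proposition~\ref{Prop:FaceCorrespondence}); your ``dictionary'' that $\alpha\oplus\beta\in\mathcal{P}$ iff $s_\alpha+s_\beta\in\Ap(S;m)$ and your inductive overflow argument are exactly the semantic unpackings of those same computations, so the two arguments coincide step for step. One small wording fix: where you write ``contradicting that $m$ is an atom,'' what you actually need (and what the hypotheses give, via no infinite divisibility) is that $m$ is not a unit.
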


\begin{proof}
	We know that $0$ and $\infty$ are not atoms of $\mathcal{P}^\infty$, so we can write an element of $\mathcal{A}(\mathcal{P}^\infty_\textup{red})$ as $[\alpha]$ for some $\alpha \in \mathcal{P} \setminus \{0\}$. We define $f([\alpha]) = [x_\alpha m + r_\alpha]$. 
	
	First, we need to show that $f$ is well-defined. Let $[\alpha] \in \mathcal{A}(\mathcal{P}^\infty_\textup{red})$. To show that $[x_\alpha m + r_\alpha] \in \mathcal{A}(S_\textup{red}) \setminus \{[m]\}$, we prove that $x_\alpha m + r_\alpha$ is an atom of $S$ not associate to $m$. Suppose that $x_\alpha m + r_\alpha = s + t$ for some $s, t \in S$. Since $m$ does not infinitely divide $s$ or $t$ by Proposition \ref{Prop:PolytopeBijection}, we know that $\pi(s)$ and $\pi(t)$ are in $A$. We then write
	\[
		s = c_1 m + r_{\pi(s)} \quad\text{and}\quad t = c_2 m + r_{\pi(t)},
	\]
	where $c_1, c_2 \in \Z$. Since $s, t \in S$, we must have $c_1 \geq x_{\pi(s)}$ and $c_2 \geq x_{\pi(t)}$. Adding $s$ and $t$ together yields
	\[
		x_\alpha m + r_\alpha = (c_1 + c_2 + d_{\pi(s), \pi(t)}) m + r_\alpha.
	\]
	This leads to
	\[
		x_\alpha = c_1 + c_2 + d_{\pi(s), \pi(t)} \geq x_{\pi(s)} + x_{\pi(t)} + d_{\pi(s), \pi(t)} \geq x_\alpha,
	\]
	which forces $c_1 = x_{\pi(s)}$ and $c_2 = x_{\pi(t)}$. This shows that $s, t \in \Ap(S;m)$ by Proposition \ref{Prop:PolytopeBijection}. Since $(s + t) - s \in S$, we know that $\pi(s) \preceq \alpha$. Thus, $\pi(s) \oplus \pi(t) = \alpha$. Because $\alpha$ is an atom of $\mathcal{P}^\infty$, we can now see that $\pi(s)$ or $\pi(t)$ is a unit of $\mathcal{P}^\infty$. Without loss of generality, we assume that $\pi(s)$ is a unit of $\mathcal{P}^\infty$. This implies that there is some $\beta \in \mathcal{P}^\infty$ such that $\pi(s) \oplus \beta = 0$, which means $\pi(s) \preceq 0$. Now we have that $0 - s \in S$, showing that $s$ is a unit of $S$, which implies that $x_\alpha m + r_\alpha$ is an atom of $S$. Furthermore, $x_\alpha m + r_\alpha$ and $m$ cannot be associates, since otherwise, there exists some unit $u \in S$ such that $x_\alpha m + r_\alpha - m = u \in S$, contradicting the fact that $x_\alpha m + r_\alpha \in \Ap(S;m)$. We need to also show that the map $f$ does not depend on the choice of representative for $[\alpha]$. If $[\alpha] = [\alpha']$ for some $\alpha' \in \mathcal{P}^\infty$, then there exist some $\gamma, \gamma' \in \mathcal{P}^\infty$ such that $\alpha \oplus \gamma = \alpha'$ and $\alpha' \oplus \gamma' = \alpha$. This implies that $(x_{\alpha'} m + r_{\alpha'}) - (x_{\alpha} m + r_\alpha)$ and $(x_{\alpha} m + r_{\alpha}) - (x_{\alpha'} m + r_{\alpha'})$ are in $S$, which means $[x_\alpha m + r_\alpha] = [x_{\alpha'} m + r_{\alpha'}]$. 
	
	To show that $f$ is injective, we let $[\alpha], [\alpha'] \in \mathcal{P}^\infty_\textup{red}$ such that $[x_\alpha m + r_\alpha] = [x_{\alpha'} m + r_{\alpha'}]$. This implies that $(x_{\alpha'} m + r_{\alpha'}) - (x_{\alpha} m + r_\alpha) = x_{\alpha' - \alpha} m + r_{\alpha' - \alpha}$ and $(x_{\alpha} m + r_{\alpha}) - (x_{\alpha'} m + r_{\alpha'}) = x_{\alpha - \alpha'} m + r_{\alpha - \alpha'}$ are in $S$. Note now that we have $\alpha \preceq \alpha'$ and $\alpha' \preceq \alpha$. The fact that $\alpha \preceq \alpha'$ and $\alpha' \preceq \alpha$ implies both that $\alpha \oplus (\alpha' - \alpha) = \alpha'$ and $\alpha' \oplus (\alpha - \alpha') = \alpha$. To show that $\alpha' - \alpha$ is a unit in $\mathcal{P}^\infty$, we recognize that $x_{\alpha' - \alpha} m + r_{\alpha' - \alpha} + x_{\alpha - \alpha'} m + r_{\alpha - \alpha'} = 0$, so $(\alpha' - \alpha) \oplus (\alpha - \alpha') = 0$. Therefore, $\alpha \sim \alpha'$ in $\mathcal{P}^\infty$. 
	
	Next, we want to show that $f$ is surjective. Let $[s] \in \mathcal{A}(S_\textup{red}) \setminus \{[m]\}$. We claim that $f([\pi(s)]) = [s]$. Since $s$ is an atom of $s$ not associate to $m$, we have that $s \in \Ap(S; m)$. Therefore, $s = x_{\pi(s)} m + r_{\pi(s)}$. What is left to show is that $[\pi(s)]$ is an atom of $\mathcal{P}^\infty_\textup{red}$. This is achieved if we can show that $\pi(s)$ is an atom of $\mathcal{P}^\infty$. Let $\alpha, \beta \in \mathcal{P}^\infty$ such that $\alpha \oplus \beta = \pi(s)$. Then $(x_\alpha m + r_\alpha) + (x_\beta m + r_\beta) = s$ by Proposition \ref{Prop:FaceCorrespondence}, which means that $x_\alpha m + r_\alpha$ or $x_\beta m + r_\beta $ is a unit of $S$. This would imply that $\alpha$ or $\beta$ is a unit of $\mathcal{P}^\infty$, showing that $\pi(s)$ is an atom of $\mathcal{P}^\infty$ and therefore $[\pi(s)] \in \mathcal{A}(\mathcal{P}^\infty_\textup{red})$. 
	
	We now want to define a bijection $g: \min\limits_{\leq} \mathsf{Z}_{\mathcal{P}^\infty_\textup{red}}(\infty) \to \mathsf{MinRepl}_m(S)$. Let $(c_\alpha) \in \min\limits_{\leq} \mathsf{Z}_{\mathcal{P}^\infty_\textup{red}}(\infty)$. We define $g(c_\alpha) = (c_a)$, where for each $[a] \in \mathcal{A}(S_\textup{red}) \setminus \{[m]\}$, we define $c_a = c_{f^{-1}(a)}$. 
	
	To show that $g$ is well-defined, we take $(c_\alpha) \in \min\limits_{\leq} \mathsf{Z}_{\mathcal{P}^\infty_\textup{red}}(\infty)$. We want to show that $g(c_\alpha) = (c_a)$ is in $\mathsf{MinRepl}_m(S)$. By fixing a representative $\alpha \in A$ for each $[\alpha] \in \mathcal{P}^\infty_\textup{red} \setminus\{[\infty]\}$ and using Lemma \ref{Lem:IteratedInequality}, we have
	\[
		d_{(c_\alpha)} + \sum_{[\alpha] \in \mathcal{A}(\mathcal{P}^\infty_\textup{red})} c_\alpha x_\alpha \geq x_\beta,
	\]
	where $\beta = \varphi_A(c_\alpha)$. Due to Proposition \ref{Prop:FaceCorrespondence}, we cannot have equality since $\varphi_{\mathcal{P}^\infty}(c_\alpha) = \infty$. Now we have that 
	\[
		\sum_{[a] \in \mathcal{A}(S_\textup{red}) \setminus \{[m]\}} c_a a \sim \left(d_{(c_\alpha)} + \sum_{[\alpha] \in \mathcal{A}(\mathcal{P}^\infty_\textup{red})}c_\alpha x_\alpha\right)m + r_\beta,
	\]
	which is divisible by $m$ in $S$. Thus, $(c_a) \in \mathsf{Repl}_m(S)$. Now let $(c_a') \in \N^{\mathcal{A}(S_\textup{red})\setminus \{[m]\}} \setminus \{(c_a)\}$ such that $(c_a') \leq (c_a)$. Define $(c_\alpha') \in \N^{\mathcal{A}(\mathcal{P}^\infty_\textup{red})}$ to be such that $c_\alpha' = c_{f(\alpha)}'$ for each $[\alpha] \in \mathcal{A}(\mathcal{P}^\infty_\textup{red})$. This forces $(c_\alpha') \leq (c_\alpha)$ and $(c_\alpha') \neq (c_\alpha)$. By the minimality of $(c_\alpha)$ in $\mathsf{Z}_{\mathcal{P}^\infty_\textup{red}}(\infty)$, we must have that $\varphi_{\mathcal{P}^\infty}(c_\alpha') \neq \infty$. This shows that $\varphi_S(c_a') \in \Ap(S;m)$, which proves that $(c_a) \in \mathsf{MinRepl}_m(S)$. 
	
	Since $g$ is reindexing the coordinates of each element using a bijection $f$, we know that $g$ is injective. To show that $g$ is surjective, we take $(c_a) \in \mathsf{MinRepl}_m(S)$. Then we have $g(c_\alpha) = c_a$, where $c_\alpha = c_{f(\alpha)}$ for each $[\alpha] \in \mathcal{A}(\mathcal{P}^\infty_\textup{red})$. Using a method similar to the one we used to show that $g$ is well-defined, we can also show that $(c_\alpha) \in \min\limits_{\leq} \mathsf{Z}_{\mathcal{P}^\infty_\textup{red}}(\infty)$.
\end{proof}

This leads us to see that $\mathsf{MinRepl}_m(S) = \mathsf{MinRepl}_m(S')$ for semigroups $S$ and $S'$ corresponding to points in the interior of the same face of the Kunz polytope. Thus, we can use the same set $\mathsf{MinRepl}_m(S)$ to check for the validity of $L(s + m) = L(s) + 1$ for all $s \in S$ or for all $s \in S'$ according to Theorem \ref{Thm:PotentialExceptions}. 

\begin{lemma}
	Let $(x_\alpha)$ and $(x_\alpha')$ be integer points in the interior of the same face of $P_{G,m,A,\{r_\alpha\}}$. Then $\mathsf{MinRepl}_m(\rho(x_\alpha)) = \mathsf{MinRepl}_m(\rho(x_\alpha'))$.
\end{lemma}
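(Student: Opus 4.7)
The plan is to reduce everything to the intrinsic structure of the Kunz preordered set and then invoke Lemma \ref{Lem:Translate} twice. By Proposition \ref{Prop:FaceCorrespondence}, two integer points in the interior of the same face of $P_{G,m,A,\{r_\alpha\}}$ realize exactly the same order relations $\alpha \preceq \beta$, so the semigroups $S \coloneqq \rho(x_\alpha)$ and $S' \coloneqq \rho(x_\alpha')$ share a common Kunz preordered set $\mathcal{P}$. Consequently, the commutative semigroup $(\mathcal{P}^\infty, \oplus)$ and its reduction $\mathcal{P}^\infty_\textup{red}$ depend only on the face, and hence so do the sets $\mathcal{A}(\mathcal{P}^\infty_\textup{red})$ and $\min_{\leq} \mathsf{Z}_{\mathcal{P}^\infty_\textup{red}}(\infty)$.

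Next, I would apply Lemma \ref{Lem:Translate} separately to the two integer points, producing bijections
\[
g : \min\limits_{\leq} \mathsf{Z}_{\mathcal{P}^\infty_\textup{red}}(\infty) \to \mathsf{MinRepl}_m(S) \quad \text{and} \quad g' : \min\limits_{\leq} \mathsf{Z}_{\mathcal{P}^\infty_\textup{red}}(\infty) \to \mathsf{MinRepl}_m(S').
\]
These share the same domain, and each is defined by the identical re-indexing recipe $c_a = c_{f^{-1}(a)}$, where the atom bijections $f$ (for $S$) and $f'$ (for $S'$) both identify $\mathcal{A}(\mathcal{P}^\infty_\textup{red})$ with the non-$m$ atoms of the respective semigroup by sending $[\alpha]$ to the unique atom of the corresponding Apéry set representing $\alpha$. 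This gives a commuting diagram in which $g$ and $g'$ differ only by the atom labeling, and one uses the composite $f' \circ f^{-1}$ to canonically identify $\mathcal{A}(S_\textup{red}) \setminus \{[m]\}$ with $\mathcal{A}(S'_\textup{red}) \setminus \{[m]\}$ by matching atoms via their residue class modulo $m$.

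The main obstacle is essentially bookkeeping: strictly speaking, $\mathsf{MinRepl}_m(S) \subseteq \N^{\mathcal{A}(S_\textup{red}) \setminus \{[m]\}}$ and $\mathsf{MinRepl}_m(S') \subseteq \N^{\mathcal{A}(S'_\textup{red}) \setminus \{[m]\}}$ sit in a priori distinct ambient spaces, so one must spell out that the equality asserted in the lemma is meant under the canonical identification just described. Once this identification is made, both sets can be viewed as the image of $\min_{\leq}\mathsf{Z}_{\mathcal{P}^\infty_\textup{red}}(\infty)$ under a single reindexing map into $\N^{\mathcal{A}(\mathcal{P}^\infty_\textup{red})}$, and the equality $\mathsf{MinRepl}_m(\rho(x_\alpha)) = \mathsf{MinRepl}_m(\rho(x_\alpha'))$ is immediate from the uniform definition of $g$ and $g'$ established in Lemma \ref{Lem:Translate}.
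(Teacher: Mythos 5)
Your proposal is correct and matches the paper's (implicit) argument: the paper gives this lemma no separate proof, treating it as an immediate consequence of Lemma~\ref{Lem:Translate} and Proposition~\ref{Prop:FaceCorrespondence}, which is precisely the reduction you carry out. Your extra care in spelling out the identification of the ambient index sets via $f' \circ f^{-1}$ (matching atoms by residue class mod $m$) makes explicit a point the paper leaves tacit, but the route is the same.
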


Moving forward, we want to simplify the notation, so we want to only consider reduced semigroups. This is sufficient in terms of analyzing $L(s)$ and $\ell(s)$ since we can look at the factorization lengths in the reduced semigroup $S_\textup{red}$ instead of in $S$. What restricting to reduced semigroups means is that we restrict ourselves to certain faces of the Kunz polytope. The Kunz preordered set $\mathcal{P}$ and the operation $\oplus$ on $\mathcal{P}^\infty$ are both easier to handle in this case.

\begin{lemma}
	Let $(x_\alpha) \in P_{G, m, A, \{r_\alpha\}}$ be an integer point. Set $S \coloneqq \rho(x_\alpha)$. Then $S$ is reduced if and only if for all nonzero $\alpha \in A$ that is a unit in $A$, we have $x_\alpha + x_{-\alpha} + d_{\alpha, -\alpha} > x_0$. Additionally, in this case, $\mathcal{P} \coloneqq \mathsf{Kunz}(S;m)$ is a poset and $\mathcal{P}^\infty$ is reduced. 
\end{lemma}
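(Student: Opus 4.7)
The plan is to leverage Proposition \ref{Prop:PolytopeBijection}, which gives the unique normal form $s = nm + r_{\pi(s)}$ with $n \geq x_{\pi(s)}$ for every element of $S = \rho(x_\alpha)$. Under this identification, deciding which elements of $S$ are units reduces to explicit arithmetic on the coordinates $x_\alpha$, and the lemma falls out of comparing this arithmetic with the defining inequalities of $P_{G,m,A,\{r_\alpha\}}$.

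First I would characterize all units of $S$. If $s = nm + r_\alpha \in S$ is a unit, then $-s \in S$ forces $\pi(-s) = -\alpha \in A$, so $\alpha$ must be a unit of $A$. Writing $-s = ym + r_{-\alpha}$ and computing $s + (-s) = 0 = x_0 m + r_0$ using $r_\alpha + r_{-\alpha} = d_{\alpha,-\alpha} m + r_0$ yields $y = x_0 - n - d_{\alpha,-\alpha}$. The two normal-form constraints $n \geq x_\alpha$ and $y \geq x_{-\alpha}$, combined with the polytope inequality $x_\alpha + x_{-\alpha} + d_{\alpha,-\alpha} \geq x_0$ (available because $\alpha + (-\alpha) = 0 \in A$), squeeze everything into equalities: $n = x_\alpha$ and $x_\alpha + x_{-\alpha} + d_{\alpha,-\alpha} = x_0$. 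Then I would rule out nonzero units lying over $0 \in A$: if some $km \in S$ with $k \neq 0$ were a unit, iterating gives $-nkm \in S$ for every $n \geq 0$, making $0$ infinitely divisible by $m$ and contradicting admissibility. Putting these together, the nonzero units of $S$ correspond exactly to nonzero $\alpha \in A$ that are units of $A$ and satisfy $x_\alpha + x_{-\alpha} + d_{\alpha,-\alpha} = x_0$; negating and using the automatic $\geq$ gives the strict inequality characterization of reducedness.

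For the two companion claims under the hypothesis that $S$ is reduced: antisymmetry of $\preceq$ on $\mathcal{P}$ follows because $\alpha \preceq \beta$ and $\beta \preceq \alpha$ make both $t - s$ and $s - t$ lie in $S$, with $s,t$ the Apéry representatives of $\alpha,\beta$; this forces $t - s$ to be a unit, hence $0$, so $\alpha = \beta$. For $\mathcal{P}^\infty$ to be reduced I would take any $\alpha \in \mathcal{P}$ with a $\oplus$-inverse $\alpha'$; unpacking the definition of $\oplus$ forces $\alpha' \in \mathcal{P}$, $\alpha + \alpha' = 0$, and $\alpha \preceq 0$. The relation $\alpha \preceq 0$ translates directly to $-(x_\alpha m + r_\alpha) \in S$, i.e.\ $x_\alpha m + r_\alpha$ is a unit of $S$; since $S$ is reduced this element must be $0$, giving $\alpha = 0$.

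The main technical care-point is bookkeeping around $d_{\alpha,-\alpha}$ and the representative $r_0$ when expanding $s + (-s) = 0$, and the (trivial but worth noting) observation that $0 \in A$ so that the polytope inequality at $(\alpha,-\alpha)$ is actually imposed. Once that bookkeeping is clean, each direction is an immediate translation between a polytope (in)equality and the condition $-s \in S$; no deeper obstacle is expected.
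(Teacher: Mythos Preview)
Your proposal is correct and follows essentially the same route as the paper: translate the existence of nonzero units into the failure of the strict inequality $x_\alpha + x_{-\alpha} + d_{\alpha,-\alpha} > x_0$ via the normal form from Proposition~\ref{Prop:PolytopeBijection}, then derive the poset and reducedness statements from the first part.

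Two minor differences are worth noting. First, for the residual case $\pi(u)=0$ the paper uses a direct squeeze (writing $u=c_1m+r_0$, $-u=c_2m+r_0$ and comparing with the polytope inequality at $(0,0)$) rather than your infinite-divisibility argument; both work, though your version silently uses that $m\in S$ lets you fill in the non-multiples of $k$ to get $-pm\in S$ for \emph{all} $p$. Second, your antisymmetry argument is cleaner than the paper's: you observe directly that $\alpha\preceq\beta$ and $\beta\preceq\alpha$ put both $t-s$ and $s-t$ in $S$, so $t-s$ is a unit and hence $0$, whereas the paper carries out an explicit computation with the constants $d_{\alpha,\beta-\alpha}$, $d_{\beta,\alpha-\beta}$, $d_{\beta-\alpha,\alpha-\beta}$ to reduce to the inequality criterion just proved. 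Your shortcut avoids that bookkeeping entirely.
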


\begin{proof}
	Suppose that $S$ is reduced. Then take a nonzero $\alpha \in A$ such that $\alpha$ is a unit in $A$. We have that $(x_\alpha m + r_\alpha) + (x_{-\alpha} m + r_{-\alpha}) = (x_\alpha + x_{-\alpha} + d_{\alpha, -\alpha})m + r_0$. We cannot have $x_\alpha + x_{-\alpha} + d_{\alpha, -\alpha} = x_0$ since otherwise $x_\alpha m + r_\alpha$ is a nonzero unit. Thus, for all nonzero $\alpha \in A$ that is a unit in $A$, we have $x_\alpha + x_{-\alpha} + d_{\alpha, -\alpha} > x_0$.
	
	Suppose that for all nonzero $\alpha \in A$ that are units in $A$, we have $x_\alpha + x_{-\alpha} + d_{\alpha, -\alpha} > x_0$. Take $u \in S$ to be a unit. Since neither $u$ nor $-u$ are infinitely divisible by $m$ by Proposition \ref{Prop:PolytopeBijection}, we know that both $\pi(u)$ and $\pi(-u) = -\pi(u)$ are in $A$. Because $u \in S$ is a unit, we have that $\pi(u) \preceq 0$ in $\mathcal{P}$, which implies that $x_{\pi(u)} + x_{-\pi(u)} + d_{\pi(u), -\pi(u)} = x_0$ by Proposition \ref{Prop:FaceCorrespondence}. By assumption, this forces $\pi(u) = 0$. Now we write $u = c_1m + r_0$ and $-u = c_2m + r_0$ for some $c_1, c_2 \in \Z$. Then we compute
	\[
		x_0m + r_0 = 0 = u + -u = c_1m + r_0 + c_2m + r_0 = (c_1 + c_2 + d_{0,0})m + r_0.
	\]
	This implies
	\[
		x_0 = c_1 + c_2 + d_{0,0} \geq x_0 + x_0 + d_{0,0} = x_0,
	\]
	which forces $u = -u = x_0m+r_0 = 0$. This means that $0$ is the only unit of $S$, so $S$ is reduced. 
	
	Now suppose that $S$ is reduced. We want to show that $\mathcal{P}$ is a poset. We already know that $\mathcal{P}$ is a preordered set, so we take $\alpha, \beta \in \mathcal{P}$ such that $\alpha \preceq \beta$ and $\beta \preceq \alpha$. We want to show that $\alpha = \beta$. By Proposition \ref{Prop:FaceCorrespondence}, we have that 
	\[
		x_\alpha + x_{\beta - \alpha} + d_{\alpha, \beta - \alpha} = x_\beta \quad\text{and}\quad x_\beta + x_{\alpha - \beta} + d_{\beta, \alpha - \beta} = x_\alpha.
	\]
	Combining the two equations yields
	\[
		x_{\beta - \alpha} + x_{\alpha - \beta} + d_{\alpha, \beta- \alpha} + d_{\beta, \alpha - \beta} = 0.
	\]
	We want to compute what $d_{\alpha, \beta- \alpha} + d_{\beta, \alpha - \beta} - d_{\beta-\alpha, \alpha - \beta}$ is. We know that
	\[
		r_\alpha + r_{\beta - \alpha} + r_\beta + r_{\alpha - \beta} - (r_{\beta-\alpha} + r_{\alpha - \beta}) = (d_{\alpha, \beta- \alpha} + d_{\beta, \alpha - \beta} - d_{\beta-\alpha, \alpha - \beta})m + r_\alpha + r_\beta - r_0.
	\]
	This implies that $(d_{\alpha, \beta- \alpha} + d_{\beta, \alpha - \beta} - d_{\beta-\alpha, \alpha - \beta})m = r_0$. Because $0 = x_0 m + r_0$, we can conclude that $d_{\alpha, \beta- \alpha} + d_{\beta, \alpha - \beta} - d_{\beta-\alpha, \alpha - \beta} = -x_0$. Thus, $x_{\beta - \alpha} + x_{\alpha - \beta} + d_{\alpha, \beta- \alpha} + d_{\beta, \alpha - \beta} = 0$ now becomes
	\[
		x_{\beta - \alpha} + x_{\alpha - \beta} + d_{\beta-\alpha, \alpha-\beta} - x_0 = 0.
	\]
	Since $S$ is reduced, this implies that $\beta - \alpha = 0$, which means $\alpha = \beta$. 
	
	Still supposing $S$ is reduced, we want to show that $\mathcal{P}^\infty$ is reduced. Let $\alpha \in \mathcal{P}^\infty$ be a unit. This means that $\alpha \oplus -\alpha = 0$, which implies that $\alpha \preceq 0$. Now we know that $0 - (x_\alpha m + r_\alpha) \in S$. Since $x_\alpha m + r_\alpha$ is also in $S$, we see that $x_\alpha m + r_\alpha$ is a unit of $S$. Thus, $x_\alpha m + r_\alpha = 0$ and therefore, $\alpha = 0$, showing that $\mathcal{P}^\infty$ is reduced. 
\end{proof}

Next, we wish to get an analogue of Corollary \ref{Cor:Reduction} that does not depend on a particular semigroup. To achieve this, we transfer divisibility of the elements of a reduced semigroup $S$ that are evaluations of elements of $\mathsf{MinRepl}_m(S)$ under $\varphi_S$ to a preorder $\sqsubseteq_S$ on $\min\limits_{\leq} \mathsf{Z}_{\mathcal{P}^\infty_\textup{red}}(\infty)$ derived from the Kunz poset $\mathcal{P} = \mathsf{Kunz}(S;m)$.

\begin{definition}
	Suppose that $P_{G, m, A, \{r_\alpha\}}$ is admissible. Let $(x_\alpha)$ be a point in the interior of $F$ such that $S \coloneqq \rho(x_\alpha)$ is reduced. Set $\mathcal{P} \coloneqq \mathsf{Kunz}(S;m)$. Take $(c_\alpha), (c_\alpha') \in \min\limits_{\leq} \mathsf{Z}_{\mathcal{P}^\infty}(\infty)$. Then we define the preorder $\sqsubseteq_S$ by
	\[(c_\alpha) \sqsubseteq_S (c_\alpha') \iff  - x_{\beta' - \beta} + \sum_{\alpha \in  \mathcal{A}(\mathcal{P}^\infty)} (c_\alpha' - c_\alpha) x_\alpha \geq b_{(c_\alpha), (c_\alpha')}
	\]
	where $\beta = \sum_{\alpha \in  \mathcal{A}(\mathcal{P}^\infty)} c_\alpha \alpha, \beta' = \sum_{\alpha \in \mathcal{A}(\mathcal{P}^\infty)} c_\alpha' \alpha$, and $b_{(c_\alpha), (c_\alpha')} \in \Z$ is the unique integer such that $b_{(c_\alpha), (c_\alpha')} m = r_{\beta' - \beta} + \sum_{\alpha \in \mathcal{A}(\mathcal{P}^\infty)} (c_\alpha - c_\alpha') r_\alpha$.
\end{definition}

We can show that $\sqsubseteq_S$ is indeed a preorder on $\min\limits_{\leq} \mathsf{Z}_{\mathcal{P}^\infty_\textup{red}}(\infty)$.

\begin{lemma}\label{Lem:Sq}
	Suppose that $P_{G, m, A, \{r_\alpha\}}$ is admissible. Let $(x_\alpha)$ be a point in the interior of $F$ such that $S \coloneqq \rho(x_\alpha)$ is reduced. Set $\mathcal{P} \coloneqq \mathsf{Kunz}(S;m)$. The relation $\sqsubseteq_S$ defines a preorder on $\min\limits_{\leq} \mathsf{Z}_{\mathcal{P}^\infty}(\infty)$. Plus, $(c_\alpha) \sqsubseteq_S (c_\alpha')$ if and only if \[ \sum_{\alpha \in \mathcal{A}(\mathcal{P}^\infty)}c_\alpha (x_\alpha m + r_\alpha) \text{ divides } \sum_{\alpha \in \mathcal{A}(\mathcal{P}^\infty)}c_{\alpha}' (x_\alpha m + r_\alpha) \] in $S$.
\end{lemma}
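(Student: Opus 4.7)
The plan is to first prove the stated equivalence between $\sqsubseteq_S$ and divisibility in $S$; once that is in place, reflexivity and transitivity of $\sqsubseteq_S$ on $\min\limits_{\leq}\mathsf{Z}_{\mathcal{P}^\infty}(\infty)$ follow formally from those properties of divisibility in $S$.

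For the substantive direction, I would fix $(c_\alpha), (c_\alpha') \in \min\limits_{\leq}\mathsf{Z}_{\mathcal{P}^\infty}(\infty)$, set $\beta \coloneqq \sum c_\alpha \alpha$ and $\beta' \coloneqq \sum c_\alpha' \alpha$ (both in $A$, since $A$ is a subsemigroup of $G/\langle m\rangle$), and put $s \coloneqq \sum c_\alpha (x_\alpha m + r_\alpha)$ and $t \coloneqq \sum c_\alpha'(x_\alpha m + r_\alpha)$ in $\mathsf{gp}(S)$. Using the defining identities $\sum c_\alpha r_\alpha = d_{(c_\alpha)} m + r_\beta$ and the analogous one for $(c_\alpha')$, I rewrite $s$ and $t$ in the canonical form $c m + r_{\bullet}$ and subtract, obtaining
\[
t - s = \Bigl[d_{(c_\alpha')} - d_{(c_\alpha)} + \sum_{\alpha} (c_\alpha' - c_\alpha) x_\alpha\Bigr] m + (r_{\beta'} - r_\beta).
\]
Next I would apply the identity $r_\beta + r_{\beta'-\beta} = d_{\beta,\beta'-\beta}\,m + r_{\beta'}$, valid whenever $\beta'-\beta \in A$, to replace $r_{\beta'} - r_\beta$ by $r_{\beta'-\beta} - d_{\beta, \beta' - \beta}\,m$, putting $t - s$ into canonical form in terms of $r_{\beta' - \beta}$.

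Since $s$ and $t$ are elements of $S$ and no element of $S$ is infinitely divisible by $m$ by Proposition \ref{Prop:PolytopeBijection}, the condition $t - s \in S$ is equivalent to the conjunction of $\beta' - \beta \in A$ together with the coefficient of $m$ in the canonical form being at least $x_{\beta'-\beta}$. I would then unpack $b_{(c_\alpha), (c_\alpha')}$ from its defining equation: substituting $\sum (c_\alpha - c_\alpha') r_\alpha = (d_{(c_\alpha)} - d_{(c_\alpha')}) m + r_\beta - r_{\beta'}$ and using the identity above yields the clean formula $b_{(c_\alpha), (c_\alpha')} = d_{(c_\alpha)} - d_{(c_\alpha')} + d_{\beta, \beta'-\beta}$. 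Plugging this back into the coefficient inequality rearranges to exactly $-x_{\beta' - \beta} + \sum (c_\alpha' - c_\alpha) x_\alpha \geq b_{(c_\alpha), (c_\alpha')}$, which is the defining condition for $(c_\alpha) \sqsubseteq_S (c_\alpha')$.

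With the equivalence in hand, the preorder properties transfer immediately: reflexivity holds because $s$ divides itself, and transitivity follows from the transitivity of divisibility in $S$ via the chain of equivalences. The main obstacle I anticipate is purely bookkeeping, namely juggling the several $d$-invariants and confirming that $\beta' - \beta \in A$ is the correct implicit side-condition to impose; no deeper conceptual ideas are needed beyond carefully unwinding the definitions.
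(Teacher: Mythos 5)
Your proposal is correct and follows essentially the same route as the paper: first establish the divisibility equivalence by writing $t - s$ in canonical form $c\,m + r_{\beta'-\beta}$, simplify the constant $b_{(c_\alpha),(c_\alpha')}$ to $d_{(c_\alpha)} - d_{(c_\alpha')} + d_{\beta,\beta'-\beta}$, and then let the preorder axioms fall out of the preorder of divisibility. Your explicit remark that $\beta' - \beta \in A$ is the implicit side-condition (enforced by the lack of infinite $m$-divisibility via Proposition \ref{Prop:PolytopeBijection}) is a small but welcome clarification that the paper's proof leaves tacit; otherwise the two arguments coincide in both structure and bookkeeping.
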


\begin{proof}
	If we can show the last statement, then we show that $\sqsubseteq_S$ defines a preorder on $\min\limits_{\leq} \mathsf{Z}_{\mathcal{P}^\infty}(\infty)$ since divisibility is a preorder on $S$.  
	
	Let $(c_\alpha), (c_\alpha') \in \min\limits_{\leq}\mathsf{Z}_{\mathcal{P}^\infty}(\infty)$. Set $\beta \coloneqq \sum_{\alpha \in  \mathcal{A}(\mathcal{P}^\infty)} c_\alpha \alpha$ and $\beta' \coloneqq \sum_{\alpha \in \mathcal{A}(\mathcal{P}^\infty)} c_\alpha' \alpha$. We calculate that
	\begin{align*}
		&\sum_{\alpha \in \mathcal{A}(\mathcal{P}^\infty)}c_{\alpha}' (x_\alpha m + r_\alpha) - \sum_{\alpha \in \mathcal{A}(\mathcal{P}^\infty)}c_\alpha (x_\alpha m + r_\alpha) \\&= \sum_{\alpha \in \mathcal{A}(\mathcal{P}^\infty)}(c_\alpha' - c_\alpha)x_\alpha m + \sum_{\alpha \in \mathcal{A}(\mathcal{P}^\infty)}(c_\alpha' - c_\alpha) r_\alpha\\
		& = \left(d_{(c_\alpha')} - d_{(c_\alpha)} + \sum_{\alpha \in \mathcal{A}(\mathcal{P}^\infty)}(c_\alpha' - c_\alpha)x_\alpha \right) m + r_{\beta'} - r_{\beta}\\
		& = \left(-d_{\beta, \beta' - \beta} + d_{(c_\alpha')} - d_{(c_\alpha)} + \sum_{\alpha \in \mathcal{A}(\mathcal{P}^\infty)}(c_\alpha' - c_\alpha)x_\alpha \right) m + r_{\beta' - \beta},
	\end{align*}
	which is in $S$ if and only if
	\[
		-d_{\beta, \beta' - \beta} + d_{(c_\alpha')} - d_{(c_\alpha)} + \sum_{\alpha \in \mathcal{A}(\mathcal{P}^\infty)}(c_\alpha' - c_\alpha)x_\alpha \geq x_{\beta' - \beta}.
	\]
	Rearranging provides us with
	\[
		-x_{\beta' - \beta} + \sum_{\alpha \in \mathcal{A}(\mathcal{P}^\infty)}(c_\alpha' - c_\alpha)x_\alpha \geq d_{\beta, \beta' - \beta} - d_{(c_\alpha')} + d_{(c_\alpha)}.
	\]
	We want to know what the right hand side is, so we compute that
	\begin{align*}
		(d_{\beta, \beta' - \beta} - d_{(c_\alpha')} + d_{(c_\alpha)})m + r_{\beta'} -r_{\beta'} + r_\beta &= r_\beta + r_{\beta' - \beta} - \sum_{\alpha \in \mathcal{A}(\mathcal{P}^\infty)}c_\alpha' r_\alpha +\sum_{\alpha \in \mathcal{A}(\mathcal{P}^\infty)}c_\alpha r_\alpha\\
		(d_{\beta, \beta' - \beta} - d_{(c_\alpha')} + d_{(c_\alpha)})m & =  r_{\beta' - \beta} + \sum_{\alpha \in \mathcal{A}(\mathcal{P}^\infty)}(c_\alpha - c_\alpha')\alpha.
	\end{align*}
	Therefore, $d_{\beta, \beta' - \beta} - d_{(c_\alpha')} + d_{(c_\alpha)} = b_{(c_\alpha), (c_\alpha')}$. This shows that $(c_\alpha) \sqsubseteq_S (c_\alpha')$ if and only if $\sum_{\alpha \in \mathcal{A}(\mathcal{P}^\infty)}c_\alpha (x_\alpha m + r_\alpha)$ divides $\sum_{\alpha \in \mathcal{A}(\mathcal{P}^\infty)}c_{\alpha}' (x_\alpha m + r_\alpha)$. 

\end{proof}

Reduced semigroups $S$ and $S'$ with the same Kunz poset $\mathcal{P}$ can induce different preorders on $\sqsubseteq_S$ and $\sqsubseteq_{S'}$ on $\min\limits_{\leq} \mathsf{Z}_{\mathcal{P}^\infty}(\infty)$. We can choose to divide up the interior of a face $F$ of $P_{G, m, A, \{r_\alpha\}}$ so that for each $(x_\alpha)$ and $(x_\alpha')$ in the same region, we have $\sqsubseteq_{\rho(x_\alpha)} = \sqsubseteq_{\rho(x_\alpha')}$. However, for the purposes of Corollary \ref{Cor:Reduction}, since $\sqsubseteq_S$ and $\sqsubseteq_{S'}$ translate divisibility in $S$ and $S'$, we only need to group together the integer points in the interior of $F$ that correspond to semigroups that define preorders on $\min\limits_{\leq} \mathsf{Z}_{\mathcal{P}^\infty}(\infty)$ with the same pseudominimal elements. The following lemma falls out of how the preorders on $\min\limits_{\leq} \mathsf{Z}_{\mathcal{P}^\infty}(\infty)$ are defined. 

\begin{lemma}\label{Lem:Cominimal}
	Suppose that $P_{G, m, A, \{r_\alpha\}}$ is admissible. Fix a face $F$ of $P_{G,m,A,\{r_\alpha\}}$. Let $(x_\alpha)$ be a point in the interior of $F$. Set $S \coloneqq \rho(x_\alpha)$ and $\mathcal{P} \coloneqq \mathsf{Kunz}(S;m)$. Let $(x_\alpha')$ be another integer point in the interior of $F$ and set $S' \coloneqq \rho(x_\alpha')$. We have that $\pmin\limits_{\sqsubseteq_S}\min\limits_{\leq} \mathsf{Z}_{\mathcal{P}^\infty}(\infty) = \pmin\limits_{\sqsubseteq_{S'}}\min\limits_{\leq} \mathsf{Z}_{\mathcal{P}^\infty}(\infty)$ if and only if for each $(c_\alpha) \in \pmin\limits_{\sqsubseteq_S}\min\limits_{\leq} \mathsf{Z}_{\mathcal{P}^\infty}(\infty)$, we have
	\begin{align*}
		&-x_{\sum(c_\alpha - c_\alpha')\alpha}' + \sum_{\alpha \in \mathcal{A}(\mathcal{P}^\infty)} (c_\alpha' - c_\alpha)x_\alpha' \geq b_{(c_\alpha'), (c_\alpha)} \\\implies& -x_{\sum(c_\alpha' - c_\alpha)\alpha}' + \sum_{\alpha \in \mathcal{A}(\mathcal{P}^\infty)} (c_\alpha - c_\alpha')x_\alpha' \geq b_{(c_\alpha), (c_\alpha')}
	\end{align*}
	for each $(c_\alpha') \in \min\limits_{\leq} \mathsf{Z}_{\mathcal{P}^\infty}(\infty) \setminus \{(c_\alpha)\}$; and for each $(c_\alpha) \in \min\limits_{\leq} \mathsf{Z}_{\mathcal{P}^\infty}(\infty) \setminus \min\limits_{\sqsubseteq_S}\min\limits_{\leq} \mathsf{Z}_{\mathcal{P}^\infty}(\infty)$, there exists $(c_\alpha') \in \min\limits_{\leq} \mathsf{Z}_{\mathcal{P}^\infty}(\infty)$ such that
	\begin{align*}
		&-x_{\sum(c_\alpha - c_\alpha')\alpha}' + \sum_{\alpha \in \mathcal{A}(\mathcal{P}^\infty)} (c_\alpha' - c_\alpha)x_\alpha' \geq b_{(c_\alpha'), (c_\alpha)} \\
		\text{and } & -x_{\sum(c_\alpha' - c_\alpha)\alpha}' + \sum_{\alpha \in \mathcal{A}(\mathcal{P}^\infty)} (c_\alpha - c_\alpha')x_\alpha' < b_{(c_\alpha), (c_\alpha')}.
	\end{align*}
\end{lemma}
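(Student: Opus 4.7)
The plan is to show that each of the two bulleted conditions in the statement unwinds directly into one of the two inclusions
\[
  \pmin_{\sqsubseteq_S} X \subseteq \pmin_{\sqsubseteq_{S'}} X, \qquad \pmin_{\sqsubseteq_{S'}} X \subseteq \pmin_{\sqsubseteq_S} X,
\]
where $X \coloneqq \min_{\leq} \mathsf{Z}_{\mathcal{P}^\infty}(\infty)$, after which their conjunction is exactly the desired equality of pseudominimal sets.

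First I would translate the definition of $\sqsubseteq_{S'}$ into the specific pair of inequalities appearing in the statement. For $(c_\alpha), (c_\alpha') \in X$, set $\beta = \sum c_\alpha \alpha$ and $\beta' = \sum c_\alpha' \alpha$, so that $\beta - \beta' = \sum (c_\alpha - c_\alpha')\alpha$ and $\beta' - \beta = \sum (c_\alpha' - c_\alpha)\alpha$. Substituting the primed coordinates $(x_\alpha')$ directly into the defining inequality of $\sqsubseteq_{S'}$, I would check that the relation $(c_\alpha') \sqsubseteq_{S'} (c_\alpha)$ is precisely the first inequality written in condition~(1), and that $(c_\alpha) \sqsubseteq_{S'} (c_\alpha')$ is precisely the second inequality written in condition~(1); the corresponding pair for condition~(2) is the same translation.

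With that dictionary in hand, I would unwind pseudominimality. By definition, $(c_\alpha) \in \pmin_{\sqsubseteq_{S'}} X$ if and only if for every $(c_\alpha') \in X \setminus \{(c_\alpha)\}$ with $(c_\alpha') \sqsubseteq_{S'} (c_\alpha)$ one also has $(c_\alpha) \sqsubseteq_{S'} (c_\alpha')$. Rewriting this equivalence through the translated inequalities, condition~(1) asserts literally that every $(c_\alpha) \in \pmin_{\sqsubseteq_S} X$ remains pseudominimal under $\sqsubseteq_{S'}$, which is the first inclusion. Dually, $(c_\alpha) \notin \pmin_{\sqsubseteq_{S'}} X$ iff some $(c_\alpha') \in X \setminus \{(c_\alpha)\}$ witnesses $(c_\alpha') \sqsubseteq_{S'} (c_\alpha)$ together with $(c_\alpha) \not\sqsubseteq_{S'} (c_\alpha')$, so condition~(2) asserts exactly that every element outside $\pmin_{\sqsubseteq_S} X$ is also outside $\pmin_{\sqsubseteq_{S'}} X$, giving the second inclusion. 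Combining the two implications yields each direction of the ``if and only if''.

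The only delicate point is the bookkeeping of signs and subscripts when swapping the two arguments of $\sqsubseteq_{S'}$ in the translation step: the index on $x'$ flips between $\beta - \beta'$ and $\beta' - \beta$, the summation coefficient flips between $c_\alpha - c_\alpha'$ and $c_\alpha' - c_\alpha$, and the pair subscript of $b$ flips between $(c_\alpha),(c_\alpha')$ and $(c_\alpha'),(c_\alpha)$. Once those pieces are aligned with the inequalities as written in conditions~(1) and~(2), the rest of the proof is a purely set-theoretic manipulation of the definition of $\pmin$, with no further content beyond the preorder axioms established in Lemma~\ref{Lem:Sq}.
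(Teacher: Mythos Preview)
Your proposal is correct and matches the paper's approach: the paper offers no proof beyond the remark that the lemma ``falls out of how the preorders on $\min_{\leq} \mathsf{Z}_{\mathcal{P}^\infty}(\infty)$ are defined,'' and your argument is precisely that definitional unwinding, translating the two displayed conditions into $(c_\alpha') \sqsubseteq_{S'} (c_\alpha) \Rightarrow (c_\alpha) \sqsubseteq_{S'} (c_\alpha')$ and its negation, so that the two clauses become the two inclusions between the pseudominimal sets.
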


\begin{definition}
	Let $(x_\alpha), (x_\alpha')$ be two integer points in the interior of the same face of $P_{G, m, A, \{r_\alpha\}}$. We say that $(x_\alpha), (x_\alpha')$ are \textbf{cominimal} if $\pmin\limits_{\sqsubseteq_{\rho(x_\alpha)}}\min\limits_{\leq} \mathsf{Z}_{\mathcal{P}^\infty}(\infty) = \pmin\limits_{\sqsubseteq_{\rho(x_\alpha')}}\min\limits_{\leq} \mathsf{Z}_{\mathcal{P}^\infty}(\infty)$
\end{definition}

Another component of Corollary \ref{Cor:Reduction} we need to worry about is that the element $m$ needs to be an atom and $L(s)$ is finite for all elements $s$ of the semigroup. We can translate the condition that $m$ is an atom to a condition for the corresponding point on the Kunz polytope provided that $L(s)$ is finite for all $s$. 

\begin{lemma}
	Suppose that $P_{G, m, A, \{r_\alpha\}}$ is admissible. Let $(x_\alpha)$ be an integer point in $P_{G, m, A, \{r_\alpha\}}$. Set $S \coloneqq \rho(x_\alpha)$. Suppose that $S$ is reduced and $L(s)$ is finite for all $s \in S$. Then $m$ is an atom of $S$ if and only if 
	\[
		-x_0 + \sum_{\alpha \in A} c_\alpha x_\alpha \neq 1 - d_{(c_\alpha)}
	\]
	for all $(c_\alpha) \in \N^A$ such that $\sum_{\alpha \in A} c_\alpha \alpha = 0$. 
\end{lemma}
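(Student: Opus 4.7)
The plan is first to rewrite the stated inequality as the statement that $m$ admits no $\N$-linear expression in terms of the canonical Apéry representatives. Since $S$ is reduced with $m \neq 0$, we have $-m \notin S$, so $0 \in \Ap(S;m)$ and the relation $x_0 m + r_0 = 0$ forces $r_0 = -x_0 m$. Hence $m = (1+x_0)m + r_0$ in $G$, and combined with the identity
\[
	\sum_{\alpha \in A} c_\alpha(x_\alpha m + r_\alpha) = \Bigl( \sum_{\alpha \in A} c_\alpha x_\alpha + d_{(c_\alpha)} \Bigr) m + r_\beta, \qquad \beta = \sum_\alpha c_\alpha \alpha,
\]
this shows that the equation $\sum c_\alpha (x_\alpha m + r_\alpha) = m$ holds in $S$ if and only if $\sum c_\alpha \alpha = 0$ together with $\sum c_\alpha x_\alpha + d_{(c_\alpha)} = 1 + x_0$, i.e., if and only if the inequality asserted in the lemma fails for $(c_\alpha)$.

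For the ``only if'' direction I would argue the contrapositive. If $m$ is not an atom, write $m = s + t$ with $s, t \in S \setminus \{0\}$, and use Proposition \ref{Prop:PolytopeBijection} to express $s = (x_{\pi(s)} + p_s)m + r_{\pi(s)}$ and $t = (x_{\pi(t)} + p_t)m + r_{\pi(t)}$ with $p_s, p_t \in \N$. Matching residues gives $\pi(s) + \pi(t) = 0$. If $\pi(s) = \pi(t) = 0$, then $s = p_s m$ and $t = p_t m$, and non-triviality forces $p_s, p_t \geq 1$, so $(p_s + p_t)m = m$ with $p_s + p_t \geq 2$; cancellation yields $(p_s + p_t - 1)m = 0$ in $S$, making $(p_s + p_t - 1)m$ an inverse of $m$ and contradicting $S$ being reduced with $m \neq 0$. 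Otherwise $\alpha := \pi(s) \neq 0$ is a unit of $A$ with inverse $-\alpha = \pi(t)$; the reducedness criterion established earlier gives $x_\alpha + x_{-\alpha} + d_{\alpha,-\alpha} > x_0$, so matching the $m$-coefficient forces $p_s = p_t = 0$ and $x_\alpha + x_{-\alpha} + d_{\alpha,-\alpha} = 1 + x_0$. Via Lemma \ref{Lem:IteratedInequality}, the choice $(c_\beta) = e_\alpha + e_{-\alpha}$ (or $2e_\alpha$ if $-\alpha = \alpha$) then witnesses failure of the inequality.

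For the ``if'' direction I would again argue contrapositively. Given $(c_\alpha) \in \N^A$ with $\sum c_\alpha \alpha = 0$ and $\sum c_\alpha x_\alpha + d_{(c_\alpha)} = 1 + x_0$, the setup produces $\sum c_\alpha(x_\alpha m + r_\alpha) = m$ in $S$. A direct computation gives $d_{(ce_0)} = -(c-1)x_0$, so restricting $(c_\alpha)$ to the coordinate $\alpha = 0$ contributes $x_0$ (not $1 + x_0$); hence some $\alpha_0 \neq 0$ has $c_{\alpha_0} > 0$. If exactly one such $\alpha_0$ appears, then $c_{\alpha_0}\alpha_0 = 0$ with $\alpha_0 \neq 0$ forces $c_{\alpha_0} \geq 2$; if several $\alpha \neq 0$ appear, we already have multiple nonzero Apéry summands. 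Either way, $m$ factors as a sum of at least two nonzero elements of $S$, so $m$ is not an atom. The hard part will be the reducedness argument ruling out the case $\pi(s) = \pi(t) = 0$, since it requires translating $(p_s + p_t - 1)m = 0$ in $S$ into $m$ being a unit; the rest is bookkeeping once the setup identity and Lemma \ref{Lem:IteratedInequality} are in hand.
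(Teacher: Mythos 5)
Your proof is correct, and both directions are covered, but you have swapped the labels: the paragraph headed ``only if'' in fact proves the contrapositive of the \emph{if} direction (``$m$ not an atom $\Rightarrow$ the inequality fails for some $(c_\alpha)$''), while the paragraph headed ``if'' proves the contrapositive of the \emph{only if} direction. A minor slip: when $\pi(s)=\pi(t)=0$ you obtain $(p_s+p_t-1)m=0$, from which the inverse of $m$ is $(p_s+p_t-2)m$, not $(p_s+p_t-1)m$; with $p_s+p_t\geq 2$ this index is in $\N$, so $-m\in S$, contradicting $0\in\Ap(S;m)$.

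Where you genuinely diverge from the paper is the direction ``$m$ not an atom $\Rightarrow$ the inequality fails.'' The paper exploits the hypothesis $L(s)<\infty$ to factor $m$ fully into atoms, all of which lie in $\Ap(S;m)$ by Proposition \ref{Prop:PolytopeBijection}, and reads off the violating $(c_\alpha)$ from that factorization in one step. You instead decompose $m=s+t$ into just two nonunits, write each summand as $(x_{\pi(\cdot)}+p)m+r_{\pi(\cdot)}$, and split into the cases $\pi(s)=0$ and $\pi(s)\neq 0$, using the earlier reducedness criterion $x_\alpha+x_{-\alpha}+d_{\alpha,-\alpha}>x_0$ to pin down $p_s=p_t=0$. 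Your route has the advantage of not invoking atomicity (it never needs $s$ and $t$ to be decomposed further), at the cost of a case analysis and an appeal to the reducedness lemma that the paper's version bypasses. Your treatment of the reverse direction is essentially the paper's, though you are more explicit about why the factorization $m=\sum c_\alpha(x_\alpha m + r_\alpha)$ has at least two nonzero summands (the $\alpha=0$ terms vanish, and $\sum c_\alpha\alpha=0$ with $\alpha_0\neq 0$ forces $c_{\alpha_0}\geq 2$ or a second nonzero index) — the paper glosses over this with ``$\abs{(c_\alpha)}\geq 2$,'' which is a little loose since some of the $\abs{(c_\alpha)}$ summands may be zero.
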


\begin{proof}
	Suppose $m$ is an atom of $S$. Let $(c_\alpha) \in \N^A$ such that $\sum_{\alpha \in A} c_\alpha \alpha = 0$. If $\abs{(c_a)} < 2$, then $-x_0 + d_{(c_\alpha)} + \sum_{\alpha \in A} c_\alpha x_\alpha = 0 \neq 1$. Suppose now that $\abs{(c_a)} \geq 2$. Then using the fact that $0 = x_0m + r_0$, we get that 
	\begin{align*}
		\sum_{\alpha \in A} c_\alpha(x_\alpha m + r_\alpha) & = \sum_{\alpha \in A} c_\alpha x_\alpha m  + \sum_{\alpha \in A} c_\alpha r_\alpha \\& = \left( d_{(c_\alpha)} + \sum_{\alpha \in A} c_\alpha x_\alpha  \right)m + r_0 \\&= \left(-x_0 + d_{(c_\alpha)} + \sum_{\alpha \in A} c_\alpha x_\alpha  \right)m,
	\end{align*}
	which cannot be equal to $m$ since $m$ is an atom and $\abs{(c_\alpha)} \geq 2$. Thus, $-x_0 + \sum_{\alpha \in A} c_\alpha x_\alpha \neq 1 - d_{(c_\alpha)}$. 
	
	Now suppose that $-x_0 + \sum_{\alpha \in A} c_\alpha x_\alpha \neq 1 - d_{(c_\alpha)}$ for all $(c_\alpha) \in \N^A$ such that $\sum_{\alpha \in A} c_\alpha \alpha = 0$. From Proposition \ref{Prop:PolytopeBijection}, we have that the atoms of $S$ are of the form $x_\alpha m + r_\alpha$ for some $\alpha \in A$ and possibly $m$. Suppose that $m$ is not an atom of $S$. We can write 
	\[
		m = \sum_{\alpha \in A} c_\alpha (x_\alpha m + r_\alpha)
	\]
	for some $(c_\alpha) \in \N^A$ where $c_\alpha = 0$ if $x_\alpha m + r_\alpha$ is not an atom of $S$. Note that $\sum_{\alpha \in A} c_\alpha \alpha = 0$ and we have $-x_0 + \sum_{\alpha \in A} c_\alpha x_\alpha = 1 - d_{(c_\alpha)}$, a contradiction. Thus, $m$ must be an atom of $S$. 
\end{proof}

Now we have all the ingredients to translate Corollary \ref{Cor:Reduction} into the language of Kunz posets and Kunz polytopes. 

\begin{theorem}\label{Thm:Main}
	Suppose that $P_{G, m, A, \{r_\alpha\}}$ is admissible. Fix a face $F$ of $P_{G,m,A,\{r_\alpha\}}$. The interior of $F$ corresponds to some Kunz preordered set $\mathcal{P}$. Let $(x_\alpha)$ be an integer point in the interior of $F$ such that $S \coloneqq \rho(x_\alpha)$ is reduced, $m$ is an atom of $S$, and $L(s)$ is finite for all $s \in S$. Then
	\begin{enumerate}
		\item $L(s+m) = L(s) + 1$ for all $s \in S$ if and only if 
		\[
			- x_{\sum c_\alpha \alpha} + \sum_{\alpha \in \mathcal{A}(\mathcal{P}^\infty)} c_\alpha x_\alpha \geq \abs{(c_\alpha)} - d_{(c_\alpha)} - L_{\mathcal{P}^\infty}\left(\sum_{\alpha \in \mathcal{A}(\mathcal{P}^\infty)} c_\alpha \alpha \right) 
		\]
		for all $(c_\alpha) \in \pmin\limits_{\sqsubseteq_S}  \min\limits_{\leq} \mathsf{Z}_{\mathcal{P}^\infty}(\infty)$ such that $\abs{(c_\alpha)} > 2$, and
		\item $\ell(s+m) = \ell(s) + 1$ for all $s \in S$ if and only if 
		\[
			- x_{\sum c_\alpha \alpha} + \sum_{\alpha \in \mathcal{A}(\mathcal{P}^\infty)} c_\alpha x_\alpha \leq \abs{(c_\alpha)} - d_{(c_\alpha)} - \ell_{\mathcal{P}^\infty}\left(\sum_{\alpha \in \mathcal{A}(\mathcal{P}^\infty)} c_\alpha \alpha \right) 
		\]
		for all $(c_\alpha) \in \pmin\limits_{\sqsubseteq_S}  \min\limits_{\leq} \mathsf{Z}_{\mathcal{P}^\infty}(\infty)$. 
	\end{enumerate}
\end{theorem}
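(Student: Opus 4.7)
The plan is to translate Corollary \ref{Cor:Reduction} through the dictionary built in this section. Using the bijection $g$ of Lemma \ref{Lem:Translate} together with Lemma \ref{Lem:Sq} (the fact that $\sqsubseteq_S$ mirrors $S$-divisibility), the image of $\pmin\limits_{\sqsubseteq_S} \min\limits_{\leq} \mathsf{Z}_{\mathcal{P}^\infty}(\infty)$ under $\varphi_S$ is exactly the set of divisibility-minima in $\{\varphi_S(c_a) \mid (c_a) \in \mathsf{MinRepl}_m(S)\}$; imposing $\abs{(c_\alpha)} > 2$ selects the preimage of $M_m^{(1)}(S)$, while the unrestricted set corresponds to $M_m^{(2)}(S)$. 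Fix such a pseudominimum $(c_\alpha)$, set $\beta \coloneqq \sum_\alpha c_\alpha \alpha$ in $A$, and let $q \coloneqq d_{(c_\alpha)} + \sum_\alpha c_\alpha x_\alpha - x_\beta$. Then $s \coloneqq \varphi_S(c_a) = (x_\beta + q)m + r_\beta$ and $q \geq 1$ (since $\varphi_{\mathcal{P}^\infty}(c_\alpha) = \infty$ forces the inequality in Lemma \ref{Lem:IteratedInequality} to be strict), and the two inequalities of the theorem rearrange to $q + L_{\mathcal{P}^\infty}(\beta) \geq \abs{(c_\alpha)}$ and $q + \ell_{\mathcal{P}^\infty}(\beta) \leq \abs{(c_\alpha)}$, respectively.

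The crucial structural step is to show that whenever $s$ is divisibility-minimal in the above image set, \emph{every} $(e_\gamma) \in \mathsf{Z}_{\mathcal{P}^\infty}(\infty)$ with $\varphi_S(e_a) = s$ is itself a pseudominimum. Any $(e_\gamma^{\min}) \in \min\limits_{\leq}\mathsf{Z}_{\mathcal{P}^\infty}(\infty)$ below $(e_\gamma)$ has $\varphi_S$-image dividing $s$, hence equal to $s$ by minimality; the nonnegative difference $\nu \coloneqq (e_\gamma) - (e_\gamma^{\min})$ then satisfies $\sum_\alpha \nu_\alpha \alpha = 0$ in $A$ and $\sum_\alpha \nu_\alpha (x_\alpha m + r_\alpha) = 0$ in the reduced $S$. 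Applying Lemma \ref{Lem:IteratedInequality} at $\beta = 0$ identifies $\nu$ with a $\mathcal{P}^\infty$-factorization of $0$, which must be trivial in the reduced $\mathcal{P}^\infty_\textup{red}$, forcing $\nu = 0$. Writing $L^{\neg m}(s)$ and $\ell^{\neg m}(s)$ for the extremal lengths of factorizations of $s$ that avoid $m$, this identifies them with, respectively, the maximum and minimum of $\abs{(c_\alpha)}$ over pseudominima with image $s$.

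Splitting factorizations of $s$ by whether they use $m$ gives the identities $L(s) = \max(L^{\neg m}(s), L(s-m)+1)$ and $\ell(s) = \min(\ell^{\neg m}(s), \ell(s-m)+1)$. Factoring $s - m = (q-1)m + s'$ with $s' \coloneqq x_\beta m + r_\beta \in \Ap(S;m)$, and noting that $L_S(s') = L_{\mathcal{P}^\infty}(\beta)$ and $\ell_S(s') = \ell_{\mathcal{P}^\infty}(\beta)$ because factorizations of Apéry elements cannot involve $m$, yields $L(s-m) \geq (q-1) + L_{\mathcal{P}^\infty}(\beta)$ and $\ell(s-m) \leq (q-1) + \ell_{\mathcal{P}^\infty}(\beta)$. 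For the backward direction of part~(i), applying the inequality to each length-greater-than-$2$ pseudominimum bounds $L^{\neg m}(s) \leq \max(2, q + L_{\mathcal{P}^\infty}(\beta)) \leq L(s-m)+1$ (the $\max$ absorbs any length-$\leq 2$ contribution and uses $L(s-m) \geq 1$, valid since $s \neq m$ as $s$ is a sum of at least two non-$m$ atoms), so $L(s) = L(s-m)+1$ and Corollary \ref{Cor:Reduction} delivers the formula; the forward direction iterates the formula along $s', s'+m, \ldots, s'+qm = s$ to obtain $L(s) = q + L_{\mathcal{P}^\infty}(\beta)$, and then $\abs{(c_\alpha)} \leq L(s)$ supplies the inequality. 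Part~(ii) runs symmetrically with reversed inequalities, using $\ell^{\neg m}(s) \geq q + \ell_{\mathcal{P}^\infty}(\beta) \geq \ell(s-m)+1$, and needs no length restriction. The main obstacle will be the reduced-ness argument in the second paragraph together with the careful treatment of the length-$\leq 2$ pseudominima in part~(i), which I handle using the observation in the paragraph following Corollary \ref{Cor:Reduction}.
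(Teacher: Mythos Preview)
Your proposal is correct and follows the same overall strategy as the paper: invoke Corollary~\ref{Cor:Reduction} through the bijection $g$ of Lemma~\ref{Lem:Translate} and the divisibility interpretation of $\sqsubseteq_S$ from Lemma~\ref{Lem:Sq}, then rewrite the condition $L(s) = L(s-m)+1$ (respectively $\ell$) at each relevant $s = \varphi_S(g(c_\alpha))$ in terms of the quantity $q = d_{(c_\alpha)} + \sum_\alpha c_\alpha x_\alpha - x_\beta$ and $L_{\mathcal{P}^\infty}(\beta)$.

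The organization differs in one respect worth noting. The paper establishes the \emph{exact} identity $L(s-m)+1 = q + L_{\mathcal{P}^\infty}(\beta)$ directly, by taking a maximal-length factorization $(b_a)$ of $s$ with $b_m > 0$ and proving, via a pseudominimality-and-divisibility contradiction, that $s - b_m m \in \Ap(S;m)$; this forces $b_m = q$ and the remaining length to equal $L_{\mathcal{P}^\infty}(\beta)$. You instead split $L(s) = \max(L^{\neg m}(s), L(s-m)+1)$, use only the inequality $L(s-m)+1 \geq q + L_{\mathcal{P}^\infty}(\beta)$ for the backward direction, and recover the equality in the forward direction by iterating the hypothesis along $s', s'+m, \dots, s' + qm$. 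Your ``crucial structural step'' (that every $m$-free factorization of such $s$ is already a pseudominimum, proved via the reducedness of $\mathcal{P}^\infty$) is the explicit version of what the paper leaves as ``we can argue that $g^{-1}(\widehat{(b_a)})$ is in $\pmin_{\sqsubseteq_S}\min_{\leq}\mathsf{Z}_{\mathcal{P}^\infty}(\infty)$.'' Both routes are sound; the paper's direct Ap\'ery computation is slightly more economical, while your decomposition via $L^{\neg m}$ makes the role of the length-$\leq 2$ pseudominima in part~(i) more transparent.
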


\begin{proof}
	Fix an integer point $(x_\alpha)$ in the interior of $F$. We make use of Corollary \ref{Cor:Reduction}. By Lemma \ref{Lem:Sq} and the bijection $g: \min\limits_{\leq}\mathsf{Z}_{\mathcal{P}^\infty} \to \mathsf{MinRepl}_m(S)$ from Lemma \ref{Lem:Translate}, we have that $L(s + m) = L(s) + 1$ for all $s \in S$ if and only if $L(\varphi_S(g(c_\alpha))) = L(\varphi_S(g(c_\alpha)) - m) + 1$ for all $(c_\alpha) \in \pmin\limits_{\sqsubseteq_S}\min\limits_{\leq}\mathsf{Z}_{\mathcal{P}^\infty}$ such that $\abs{(c_\alpha)} > 2$. 
	
	Take $(c_\alpha) \in \pmin\limits_{\sqsubseteq_S}\min\limits_{\leq}\mathsf{Z}_{\mathcal{P}^\infty}$ such that $\abs{(c_\alpha)} > 2$. Set $s \coloneqq \varphi_S(g(c_\alpha))$. Now we calculate $L(-m + s) + 1$. Let $(b_a) \in \mathsf{Z}(s)$ such that $b_m > 0$ and $\abs{(b_a)}$ is maximal among all sequences with the property that the coordinate indexed by $m$ is strictly positive. Note that $L(-m+s) + 1 = \abs{(b_a)}$. We know that $(b_a) - b_me_m \in \mathsf{Z}(-b_m m + s)$ so $L(-b_m m + s) \geq \abs{(b_a)} - b_m$. On the other hand, let $(b_a') \in \mathsf{Z}(-b_m m + s)$ such that $\abs{(b_a')} = L(-b_m m + s)$. Then $b_m e_m + (b_a') \in \mathsf{Z}(s)$, which is a factorization of $s$ with at least one $m$, so we also get that $\abs{(b_a)} \geq b_m + L(-b_m m + s)$. Therefore, we have that 
	\[
		L(-m + s) + 1 = b_m + L(-b_m m + s). 
	\]
	We now claim that $-b_m m + s \in \Ap(S;m)$. Suppose not. Then $-b_m m + s = \varphi_S(g(b_\alpha''))$ for some $(b_\alpha'') \in \mathsf{Z}_{\mathcal{P}^\infty}(\infty)$. Take $(b_\alpha''') \in \min\limits_{\leq} \mathsf{Z}_{\mathcal{P}^\infty}(\infty)$ such that $(b_\alpha''') \leq (b_\alpha'')$. Now we compute that
	\[
		s - \varphi_S(g(b_\alpha''')) = b_m m + \varphi_S(g(b_\alpha'') - g(b_\alpha''')) \in S.
	\]
	This implies that $(b_\alpha''') \sqsubseteq_S (c_\alpha)$ by Lemma \ref{Lem:Sq}. Since $(c_\alpha)$ is a pseudominimum, we also get that $(c_\alpha) \sqsubseteq_S (b_\alpha''')$. Therefore, $s = \varphi_S(g(b_\alpha'''))$. Since $b_m \neq 0$, we get that $m$ is a unit and $0$ would be infinitely divisible by $m$, a contradiction. Therefore, $-b_m m + s$ is an element of the Apéry set. Since $-b_m m + s \in \Ap(S;m)$ and $\pi(-b_m m + s) = \sum_{\alpha \in \mathcal{A}(\mathcal{P}^\infty)} c_\alpha \alpha$, we know that \[L_S(-b_m m + s) = L_{\mathcal{P}^\infty}\left(\sum_{\alpha \in \mathcal{A}(\mathcal{P}^\infty)} c_\alpha \alpha \right).\] As for calculating $b_m$, we compute
	\begin{align*}
		b_m m = s - (-b_m m + s) &= \sum_{\alpha \in \mathcal{A}(\mathcal{P}^\infty)} c_\alpha(x_\alpha m + r_\alpha) - (x_{\sum c_\alpha \alpha} m + r_{\sum c_\alpha \alpha})\\
		& = \left( d_{(c_\alpha)} - x_{\sum c_\alpha \alpha} + \sum_{\alpha \in \mathcal{A}(\mathcal{P}^\infty)} c_\alpha x_\alpha  \right)m.
	\end{align*}
	Therefore, 
	\[
			L(-m + s) + 1 = L_{\mathcal{P}^\infty}\left(\sum_{\alpha \in \mathcal{A}(\mathcal{P}^\infty)} c_\alpha \alpha \right)+ d_{(c_\alpha)} - x_{\sum c_\alpha \alpha} + \sum_{\alpha \in \mathcal{A}(\mathcal{P}^\infty)} c_\alpha x_\alpha. 
	\]
	
	Now we are ready to so the converse direction of the first statement. We assume that for all $(c_\alpha) \in \pmin\limits_{\sqsubseteq_S}  \min\limits_{\leq} \mathsf{Z}_{\mathcal{P}^\infty}(\infty)$ such that $\abs{(c_\alpha)} > 2$, we have that \[- x_{\sum c_\alpha \alpha} + \sum_{\alpha \in \mathcal{A}(\mathcal{P}^\infty)} c_\alpha x_\alpha \geq \abs{(c_\alpha)} - d_{(c_\alpha)} - L_{\mathcal{P}^\infty}\left(\sum_{\alpha \in \mathcal{A}(\mathcal{P}^\infty)} c_\alpha \alpha \right). \]
	Rearranging the inequality yields $\abs{(c_a)} \leq L(-m + s) + 1$, $s = \varphi(g(c_\alpha))$. Suppose that $(b_a) \in \mathsf{Z}(s)$ such that $\abs{(b_a)} = L(s)$. If $b_m > 0$, then $L(s) = L(-m + s) + 1$ by Lemma \ref{Lem:Noms}. If $b_m = 0$, then we can argue that $g^{-1}(\widehat{(b_a)})$, where $\widehat{(b_a)}$ is $(b_a)$ with the coordinate indexed by $m$ omitted, is in $\pmin\limits_{\sqsubseteq_S}  \min\limits_{\leq} \mathsf{Z}_{\mathcal{P}^\infty}(\infty)$. We know that $\abs{(b_a)} > 2$ so now we get $L(s) = \abs{(b_a)} \leq L(-m + s) + 1$, which implies $L(s) = L(-m + s) + 1$. 
	
	Next, we now suppose that there exists some $(c_\alpha) \in \pmin\limits_{\sqsubseteq_S}  \min\limits_{\leq} \mathsf{Z}_{\mathcal{P}^\infty}(\infty)$ with $\abs{(c_\alpha)} > 2$ such that $\abs{(c_\alpha)} > L(-m + s) + 1$, where $s = \varphi(g(c_\alpha))$. Then $L(s) = \abs{(c_\alpha)} > L(-m + s) + 1$, as desired.

	The proof for the shortest length formula is similar.
\end{proof}

	Note that for cominimal points, the same inequalities are used to detect whether $L(s+m) = L(s) + 1$ for all $s$ and similarly for for $\ell(s+m) = \ell(s) + 1$. We give an example of this theorem to test the validity of the formula $L(s + m) = L(s) + 1$ for all semigroups in a family. 

\begin{example}
	Let $S = \langle 5,6,8 \rangle$. We can compute that $\mathsf{MinRepl}_5(S) = \{(3,0), (0,2), (2,1)\}$. The evaluations of these elements under $\varphi_S$ are $18, 16$, and $20$. There are no nontrivial divisibility relations between these elements. We therefore have $\pmin\limits_{\sqsubseteq_S} \min\limits_{\leq}  \mathsf{Z}_{\mathcal{P}^\infty}(\infty) = \{(3,0), (0,2), (2,1)\}$, where $\mathcal{P} = \mathsf{Kunz}(S;5)$. 
	
	We compute that $\Ap(S;5) = \{0, 6, 12, 8, 14\}$ and thus $S$ corresponds to the point $(0, 1, 2, 1, 2)$ on $P_{\Z, 5, \Z/5\Z, \{0,1,2,3,4\}}$. We want to see which points are cominimal to $(0, 1, 2, 1, 2)$ on the interior of the same face $F$ of the Kunz polytope. We can compute that $(\mathcal{P}, \preceq)$ has the nontrivial relations $0 \preceq 1 \preceq 2$, $0 \preceq 3 \preceq 4$ and $1 \preceq 4$. 
	
	Suppose that $(x_0, x_1, x_2, x_3, x_4)$ is an integer point in the interior of $F$ that is cominimal with $(0,1,2,1,2)$. If $(3,0) \sqsubseteq_{\rho(x_\alpha)} (0,2)$, then $(0,2) \sqsubseteq_{\rho(x_\alpha)} (3,0)$, we have that $-x_3 + 2x_3 - 3x_1 \geq 0$ implies $-x_2 + 3x_1 -2x_3 \geq 1$. However, adding the two yields $-x_3 - x_2 \geq 1$, a contradiction. Thus, $(3,0) \not\sqsubseteq_{\rho(x_\alpha)} (0,2)$. We can similarly compute that there are no nontrivial preorder relations under $\sqsubseteq_{\rho(x_\alpha)}$. This means we must have 
	\begin{alignat*}{4}
		- 3x_1 	& + x_3 & < 0&,\\
		- 3x_1 	& + x_3	& < 0&,\\
		   x_1 	& -2x_3 & < 1&
	\end{alignat*}
	all hold true, coming from $(3,0) \not\sqsubseteq_{\rho(x_\alpha)} (0,2)$, $(3,0) \not\sqsubseteq_{\rho(x_\alpha)} (2,1)$, and $(0,2) \not\sqsubseteq_{\rho(x_\alpha)} (2,1)$, respectively.
	
	Now let $(x_0, x_1, x_2, x_3, x_4)$ be in the interior of $F$ and cominimal with $(0,1,2,1,2)$. By Theorem \ref{Thm:Main}, we have that $L(s+5) = L(s) + 1$ for all $s \in \rho(x_\alpha)$ if and only if $-x_3 + 3x_1 \geq 3 - 0 - 1$ and $-x_0 + 2x_1 + x_3 \geq 2$. Simplifying these two inequalities yields 
	\[
		-x_3 + 3x_1 \geq 2 \quad\text{and}\quad 2x_1 + x_3 \geq 2.
	\]
	We have seen that $x_1, x_3 \geq 0$, so the only way for $2x_1 + x_3 < 2$ to happen is if $x_1 = 0$ and $x_3 \in \{0,1\}$. Since $(x_0, x_1, x_2, x_3, x_4) \in P_{\Z, 5, \Z/5\Z, \{0,1,2,3,4\}}$, we have that $x_1 + x_1 + x_1 \geq x_3$ and therefore $x_1 = 0$ implies $x_3 = 0$. Now, $x_1 = 0$ implies that $x_1 \oplus x_1 \oplus x_1 = x_3$ and therefore $x_1 \preceq x_3$. This means that if $(x_0, x_1, x_2, x_3, x_4)$ be in the interior of $F$, we automatically have $2x_1 + x_3 \geq 2$. Thus, $L(s+5) = L(s) + 1$ for all $s \in \rho(x_\alpha)$ if and only if $-x_3 + 3x_1 \geq 2$. Furthermore, we have $3x_1 > x_3$ on the interior of $F$, so we must have $-x_3 + 3x_1 = 1$ for there to exist $s \in \rho(x_\alpha)$ such that $L(s+5) \neq L(s) + 1$. 
	
	We can conclude if $(x_0, x_1, x_2, x_3, x_4)$ is an integer point in the interior of $F$ and $-x_3 + 3x_1 = 1$, there exists $s \in \rho(x_\alpha)$ such that $L(s+5) \neq L(s) + 1$. This is because $-x_3 + 3x_1 = 1$ implies $-3x_1 + x_3 < 0$ and $x_1 -2x_3 = -5x_1 + 2 < 1$ so $(x_0, x_1, x_2, x_3, x_4)$ is cominimal with $(0, 1, 2, 1, 2)$. 
	
	Now we look at some points cominimal with $(0, 1, 2, 1, 2)$ and the corresponding semigroups. 
	
	\begin{enumerate}
		\item The point $(x_0, x_1, x_2, x_3, x_4) = (0, 1, 2, 1, 2)$ corresponds to $S = \langle 5, 6, 8\rangle$. We know that $(0, 1, 2, 1, 2)$ and $(0, 1, 2, 1, 2)$ are cominimal. Since $-x_3 + 3x_1 = 2 \neq 1$, we know that $L(s + 5) = L(s) + 1$ for all $s \in S$.
		
		\item The point $(x_0, x_1, x_2, x_3, x_4) = (0, 11, 22, 32, 43)$ is in the interior of $F$. This point was obtained starting with $x_1 = 11$ and $x_3 = 32$ and using the relations $2x_1 = x_2$ and $x_1 + x_3 = x_4$. We see that $-x_3 + 3x_1 = 1$ in this case so $(0, 11, 22, 32, 43)$ and $(0, 1, 2, 1, 2)$ are cominimal and $L(s + 5) \neq L(s) + 1$ for some $s$ in the corresponding semigroup, which is $\langle 5, 56, 163 \rangle$. 
		
		\item The point $(x_0, x_1, x_2, x_3, x_4) = (0, 3, 6, 2, 5)$ is in the interior of $F$ and can be obtained by setting $x_1 = 3$ and $x_3 = 2$. We check that $-3x_1 + x_3 < 0 $ and $x_1 - 2x_3 < 1$ so $(0,3,6,2,5)$ and $(0,1,2,1,2)$ are cominimial. Here, we have $-x_3 + 3x_1 \neq 1$ so $L(s + 5) = L(s) + 1$ for all $s$ in the corresponding semigroup, which is $\langle 5, 13, 16 \rangle$. 
		
		\item The point $(x_0, x_1, x_2, x_3, x_4) = (0, 3, 6, 8, 11)$ is in the interior of $F$, determined by $x_1 = 3$ and $x_3 = 8$. In this case, we have $-x_3 + 3x_1 = 1$, so $L(s + 5) \neq L(s) + 1$ for some $s$ in the semigroup $\langle 5, 16, 43 \rangle$.  
	\end{enumerate}

\end{example}

\begin{remark}
	For a numerical semigroup, if we let $m$ be the largest atom, there is only one integer point on the face of a Kunz polytope that corresponds to a numerical semigroup with $m$ being the largest atom. This is because $x_\alpha = 0$ (if we take $r_{n+m\Z} = n \in \{0, 1, \dots, m-1\}$) for all $\alpha \in \mathcal{A}(\mathcal{P}^\infty)$ and this determines $x_\alpha$ for all $\alpha$. 

\end{remark}

\bibliographystyle{amsalpha}
\bibliography{references}

\newcommand{\etalchar}[1]{$^{#1}$}
\providecommand{\bysame}{\leavevmode\hbox to3em{\hrulefill}\thinspace}
\providecommand{\MR}{\relax\ifhmode\unskip\space\fi MR }
\providecommand{\MRhref}[2]{%
  \href{http://www.ams.org/mathscinet-getitem?mr=#1}{#2}
}
\providecommand{\href}[2]{#2}
\begin{thebibliography}{CCM{\etalchar{+}}14}

\bibitem[AEG{\etalchar{+}}22]{Kunz2}
J.~Autry, A.~Ezell, T.~Gomes, C.~O'{N}eill, C.~Preuss, T.~Saluja, and
  E.~Torres~Davila, \emph{Numerical semigroups, polyhedra, and posets {II}:
  locating certain families of semigroups}, Adv. Geom. \textbf{22} (2022),
  no.~1, 33--48.

\bibitem[BOP17]{barron2017set}
T.~Barron, C.~O’Neill, and R.~Pelayo, \emph{On the set of elasticities in
  numerical monoids}, Semigroup Forum \textbf{94} (2017), 37--50.

\bibitem[BPOD20]{WilfConj}
W.~Bruns, García-Sánchez P., C.~O'{N}eill, and Wilburne D., \emph{Wilf's
  conjecture in fixed multiplicity}, Int. J. Algebra Comput. \textbf{30}
  (2020), no.~4, 861--882.

\bibitem[CCM{\etalchar{+}}14]{chap14}
S.~T. Chapman, M.~Corrales, A.~Miller, C.~Miller, and D.~Patel, \emph{The
  catenary and tame degrees on a numerical monoid are eventually periodic}, J.
  Aust. Math. Soc. \textbf{97} (2014), no.~3, 289--300.

\bibitem[CHK09]{chap09}
S.~T. Chapman, R.~Hoyer, and N.~Kaplan, \emph{Delta sets of numerical monoids
  are eventually periodic}, Aequ. Math. \textbf{77} (2009), no.~3, 273--279.

\bibitem[CO18]{chapman2018factoring}
S.~T. Chapman and C.~O’Neill, \emph{Factoring in the {C}hicken {M}c{N}ugget
  monoid}, Math. Mag. \textbf{91} (2018), 323--336.

\bibitem[GOTD23]{Kunz3}
T.~Gomes, C.~O'{N}eill, and E.~Torres~Davila, \emph{Numerical semigroups,
  polyhedra, and posets {III}: minimal presentations and face dimension},
  Electron. J. Comb. \textbf{30} (2023), no.~2.

\bibitem[GSR09]{rosales2009numerical}
P.~A. Garc{\'\i}a-S{\'a}nchez and J.~C. Rosales, \emph{Numerical semigroups},
  vol.~20, Springer, 2009.

\bibitem[KO21]{Kunz1}
N.~Kaplan and C.~O'{N}eill, \emph{Numerical semigroups, polyhedra, and posets
  {I}: The group cone}, Comb. Theory \textbf{1} (2021), \#19.

\bibitem[Kun87]{Kunz}
E.~Kunz, \emph{Über die {K}lassifikation numerischer {H}albgruppen},
  Regensburger Mathematische Schriften, 1987.

\bibitem[LY23]{LiuYap}
B.~Liu and J.~Yap, \emph{Longest and shortest factorizations in embedding
  dimension three}, To appear in \textit{Involve}, arXiv:2209.12113.

\bibitem[O'{N}17]{oneill17}
C.~O'{N}eill, \emph{On factorization invariants and {H}ilbert functions}, J.
  Pure Appl. Algebra \textbf{221} (2017), no.~12, 3069--3088.

\bibitem[OP14]{oneill14}
C.~O'{N}eill and R.~Pelayo, \emph{On the linearity of $\omega$-primality in
  numerical monoids}, J. Pure Appl. Algebra \textbf{218} (2014), no.~9,
  1620--1627.

\end{thebibliography}
\end{document}